\documentclass[aihp]{imsart}

\RequirePackage{amsthm,amsmath,amsfonts,amssymb}
\RequirePackage[numbers]{natbib}
\RequirePackage[colorlinks,citecolor=blue,urlcolor=blue]{hyperref}
\usepackage{csquotes}
\usepackage{dsfont}
\usepackage{graphicx}

\startlocaldefs
\theoremstyle{plain}
  \newtheorem{thm}{Theorem}[section]
  \newtheorem{lem}[thm]{Lemma}
  
  \newtheorem{cor}[thm]{Corollary} 
\theoremstyle{definition}

\theoremstyle{plain}

\def\dd{\mathrm{d}}

\def\CF{\mathcal{F}}
\def\CG{\mathcal{G}}

\def\CZ{\mathcal{Z}}
\def\1{\mathds{1}}

\def\BE{\mathbb{E}}

\def\BN{\mathbb{N}}
\def\BR{\mathbb{R}}
\def\BP{\mathbb{P}}

\def\Var{\text{Var}}
\def\Cov{\text{Cov}}

\newcommand{\indep}{\perp\!\!\!\!\!\!\perp} 

\endlocaldefs

\begin{document}

\begin{frontmatter}

\title{Local estimation of transition rates of jump processes through discretization}
\runtitle{Local estimation of transition rates of jump processes through discretization}

\begin{aug}
\author[A]{\inits{}\fnms{Martin}~\snm{Bladt}\ead[label=e1]{martinbladt@math.ku.dk}} \and
\author[A]{\inits{}\fnms{Rasmus Frigaard}~\snm{Lemvig}\ead[label=e2]{rfl@math.ku.dk}}
\address[A]{Department of Mathematical Sciences, University of Copenhagen, Copenhagen, 2100, DK\printead[presep={,\ }]{e1}, \printead[presep={,\ }]{e2}}

\end{aug}

\begin{abstract}
We investigate the Poisson regression method for Markov and semi-Markov jump processes from a nonparametric angle, allowing the lengths of the time and duration intervals in the partition to vary with the number of observations. Imposing no structural assumptions on the true intensities, we obtain asymptotic normality of the occurence/exposure rates under appropriate shrinking conditions on the partition lengths. We derive asymptotic normality results for both Markov and semi-Markov models using only classical central limit theorems and elementary results for counting processes. All results are illustrated on both simulated and real data. 
\end{abstract}


\begin{keyword}[class=MSC]
\kwd[Primary ]{62N02}
\kwd{62G20}
\kwd[; secondary ]{60J74}
\end{keyword}

\begin{keyword}
\kwd{Markov jump processes}
\kwd{Semi-Markov jump processes}
\kwd{Multi-state processes}
\kwd{Poisson regression}
\kwd{}
\end{keyword}

\end{frontmatter}
\tableofcontents
\newpage

\section{Introduction}

In this paper, we consider a novel approach to the problem of transition rate estimation for jump processes using Poisson regression. Poisson regression goes back to \cite{PrenticeGloeckler}, \cite{Holford} and \cite{LairdOlivier} (see also \cite{Aalenetal} for background) and is a simple method which is easy to communicate and fast to fit, making it a popular choice among practitioners. For actuaries, the Poisson regression method is especially significant since it allows for the efficient analysis of large-scale insurance portfolios while maintaining the transparency and interpretability required by regulators and stakeholders. Poisson regression arises from the assumption that transition intensities are piecewise constant, making the method inherently parametric, but still with a high degree of flexibility, striking a compromise between nonparametric methods and methods imposing strong structural assumptions on the data.

The classical approach for establishing consistency and asymptotic normality for Poisson regression relies on general results for maximum likelihood estimators for counting process models (see section 5.1 of \cite{Borgan} for the case of piecewise constant intensities and \cite{ABGK} for more background), and the assumptions behind these results are often unverifiable and highly technical in nature. We present a new approach for establishing consistency and asymptotic normality with very mild assumptions on the data-generating process and easy-to-implement requirements on the partition interval lengths. We provide results for Markov and semi-Markov jump processes, respectively. Markov multi-state models constitute the backbone of classical life insurance models and go back to \cite{HoemMarkov}, \cite{HoemCongress} and \cite{Amsler}, and Markov models remain important from both practical and theoretical perspectives, see \cite{Koller} for a modern treatment. A reason for the success of Markov models is their simplicity in regards to computation of transition probabilities and quantities of interest in life insurance such as cash flows and reserves (see Chapter V of \cite{SteffensenAsmussen} for background on Markov models in life insurance). Semi-Markov models are a generalisation of Markov models that allow the behaviour of the process to depend on the duration in the current state. This, in contrast to the classical Markov jump process, allows for effects that are specific to duration. Examples include disability insurance and loss of earning capacity, where the reactivation intensity can be significantly larger at the onset of disability. Applications of semi-Markov processes in life insurance go back to \cite{Janssen} and \cite{HoemSemiMarkov} and are investigated in depth by \cite{Helwich}. A modern treatment with extensions to policyholder behaviour is given in \cite{BuchardtMoller}. For theoretical background on semi-Markov processes, consult e.g. \cite{JanssenDominicis} and \cite{Nollau}.


In this paper, we present a new nonparametric approach to Poisson regression for Markov and semi-Markov models. Nonparametric approaches for transition rate estimation are plentiful, dating back to \cite{Nelson} and \cite{Aalen} and later developed by, among others, \cite{Beran}, \cite{Dabrowska} and \cite{BladtFurrer}. Methods based on kernel smoothing were introduced in \cite{RamlauHansen} and further investigated in \cite{NielsenLinton}, \cite{DabrowskaCox} and \cite{McKeagueUtikal}. Our contribution consists in a new analysis of the already established Poisson regression methods for Markov and semi-Markov models. Instead of letting the time (and for semi-Markov models, duration) intervals be fixed under the assumption that the true intensity is piecewise constant on this partition, we impose no structural assumptions on the true intensities and let the interval lengths shrink at a proper rate. Both the asymptotic normality results themselves as well as their proofs are elementary in nature, relying only on results from classical probability theory and elementary considerations from counting process theory. For a review of the tools we need from the latter, consult Appendix \ref{sec:background}.

The article is structured as follows. In Section \ref{sec:Markov}, we discuss the classical smooth Markov model, starting with model specification and recapping the Poisson regression method before moving on to the asymptotic normality results for dynamic bin widths. Section \ref{sec:semiMarkov} is built up the same way, but for the semi-Markov model. Section \ref{sec:simulation} contains simulation studies for both Markov and semi-Markov models that illustrate our theoretical results, while Section \ref{sec:Application} showcases the results on a real dataset. Section \ref{sec:Discussion} concludes the paper with a discussion and suggestions for future work. Throughout the article, proofs of lemmata and corollaries are deferred to Appendix \ref{sec:Proofs}, while proofs of the main results, namely Theorems \ref{thm:PoissonMarkov} and \ref{thm:PoissonSemiMarkov}, are included in the main text.

We fix a background filtered probability space $(\Omega, \CF, (\CF_t)_{t \geq 0}, \BP)$ satisfying the usual conditions. Throughout, we consider a jump process $Z = (Z_t)_{t \geq 0}$ on a finite state space $\mathcal{Z}$. Let $N$ denote the multivariate counting process with components $N_{jk} = (N_{jk}(t))_{t \geq 0}$ counting the number of jumps between states,
\begin{equation*}
    N_{jk}(t) = \#\{s \in (0, t] : Z_{s-} = j, Z_s = k\}
\end{equation*}
for $j, k \in \mathcal{Z}, j \neq k$. We assume
\begin{equation}\label{eq:finitefirstmoment}
    \BE[N_{jk}(t)] < \infty, \quad t \geq 0
\end{equation}
which also guarantees that $Z$ is non-explosive. We also impose the following assumption referred to as \emph{random right-censoring}, namely
\begin{equation}\label{eq:independentRightCensoring}
    Z \indep R.
\end{equation}
All samples throughout the article are assumed to be iid. All limits are understood to be for $n \to \infty$ unless otherwise stated.

\section{The Markov model}\label{sec:Markov}

\subsection{Model specification}

The Markov model can be specified using \emph{cumulative transition rates} $q_{jk}$ via the Doob--Meyer decomposition of $N_{jk}$ by
\begin{equation*}
    N_{jk}(t) = M_{jk}(t) + \int_{0}^t 1_{(Z_{s-} = j)}q_{jk}(\dd s),
\end{equation*}
where $M_{jk}$ is a mean-zero martingale with respect to $(\CF_t)_{t \geq 0}$, assumed to be (the completion of) the natural filtration generated by $Z$. In the context of Poisson regression, one assumes a \emph{smooth} Markov model where 
\begin{equation*}
    q_{jk}(t) = \int_0^t \mu_{jk}(s)\dd s
\end{equation*}
with the $\mu_{jk}$ measurable and sufficiently regular. We refer to the $\mu_{jk}$ as \emph{transition rates} or \emph{intensities} for the Markov process. The interpretation of the intensity $\mu_{jk}$ is as follows. In the limit $h \to 0$, it holds for $t \geq 0$ that
\begin{equation*}
    \BP(Z_{t + h} = k \mid Z_t = j) = \delta_{jk} + \mu_{jk}(t)h + o(h)
\end{equation*}
with $\delta_{jk} = 1_{(j = k)}$ the Kronecker delta, so that if $j \neq k$, we may interpret $\mu_{jk}(t)dt$ as the probability of jumping from state $j$ to $k$ in the infinitesimal time interval $[t, t + dt)$ given occupancy in $j$ at time $t$. In the presence of censoring, we let $(\CG_t)_{t \geq 0}$ denote the completion of the enlarged filtration
\begin{equation*}
    t \mapsto \CF_t \lor \sigma(C(s) : s \leq t)
\end{equation*}
where $C(s) = 1_{(s \leq R)}$ is the censoring process, see also chapter III.2.2 of \cite{ABGK}. Note that $C$ is predictable with respect to the $\CG$-filtration, and by the random right-censoring assumption (\ref{eq:independentRightCensoring}), it follows from Examples III.2.5 and III.2.8 in \cite{ABGK} that the compensators of $N_{jk}$ are unchanged when passing to the larger filtration $\CG$ and that the $\CG$-compensator of $t \mapsto N_{jk}(t \land R)$ is given by
\begin{equation*}
    t \mapsto \int_0^t 1_{(s \leq R)}1_{(Z_{s-} = j)}\mu_{jk}(s)\dd s.
\end{equation*}
This will play a role in some of the computations below. In the main result, the \emph{occupation probability} under censoring
\begin{equation*}
    p_j^\texttt{c}(t) := \BP(Z_t = j, t \leq R)
\end{equation*}
is an essential quantity. 

\subsection{Poisson regression for the Markov model}

We briefly recall the background for the Poisson regression method for a smooth Markov model. To start with, consider the general setup of parameter-dependent intensities $\theta \mapsto \mu_{jk}(\cdot \mid \theta)$. Letting $\pi$ denote the vector of initial probabilities
\begin{equation*}
    \pi = (\pi_j)_{j \in \mathcal{Z}}, \quad \pi_j = \BP(Z_0 = j),
\end{equation*}
the right-censored log-likelihood becomes (under the random right-censoring assumption)
\begin{equation*}
    \ell_n(\pi, \theta) = \sum_{j \in \mathcal{Z}} B_j \log \pi_j + \sum_{i = 1}^n \sum_{j \neq k} \left(\int_0^{R^i} \log \mu_{jk}(s \mid \theta) \dd N_{jk}^i(s) - \int_0^{R^i} I_j^i(s)\mu_{jk}(s \mid \theta) \dd s\right),
\end{equation*}
where $B_j = \sum_{i = 1}^n I_j^i(0)$ and $I_j^i(s) = 1_{(Z_s^i = j)}$. Assuming that the $\mu_{jk}$ are piecewise constant on the grid $0 = t_0 < t_1 < \cdots < t_M = \infty$ with
\begin{equation*}
    \mu_{jk}(t \mid \theta) = \mu_{jk}^{(m)}, \quad t \in [t_{m - 1}, t_m),
\end{equation*}
the log-likelihood $\ell_n$ above simplifies to
\begin{equation*}
    \ell_n(\pi, \theta) =\sum_{j \in \mathcal{Z}}B_j \log \pi_j +  \sum_{m = 1}^M \sum_{j \neq k} \Big(O_{jk}(m) \log \mu_{jk}^{(m)} - E_j(m) \mu_{jk}^{(m)}\Big),
\end{equation*}
where $O_{jk}(m)$ denotes the number of jumps from state $j$ to state $k$ in the interval $[t_{m - 1}, t_m)$ (the \emph{occurence}), and $E_j(m)$ is the total time spent in $[t_{m - 1}, t_m)$ (the \emph{exposure}):
\begin{equation*}
    O_{jk}(m) = \sum_{i = 1}^n \int_{t_{m - 1}}^{t_m} \dd N_{jk}^i(s \land R^i), \quad E_j(m) = \sum_{i = 1}^n \int_{t_{m - 1}}^{t_m} 1_{(s < R^i)} I_j^i(s)\dd s.
\end{equation*}
The parameter $\theta$ now consists of the $M$ values $\mu_{jk}^{(m)}$, and we want to solve for the MLE of $\theta$ and $\pi$. Recognising that the log-likelihood $\ell_n(\pi, \theta)$ is (up to addition with a constant) the same as one would obtain from independent observations
\begin{equation*}
    (B_j)_{j \in \mathcal{Z}}, \quad (O_{jk}(m))_{j, k \in \mathcal{Z}, j \neq k, m = 1, \dots, M}
\end{equation*}
with $(B_j)_{j \in \mathcal{Z}}$ multinomial with parameters $n$ and $\pi$ and $O_{jk}(m)$ Poisson with parameter $E_j(m) \mu_{jk}^{(m)}$ (considering $n$ and $E_j(m)$ fixed), the MLE's are readily seen to be
\begin{equation*}
    \hat{\pi}_j = \frac{B_j}{n} \quad \text{and} \quad \hat{\mu}_{jk}^{(m)}  = \frac{O_{jk}(m)}{E_j(m)},
\end{equation*}
and we refer to the $\hat{\mu}_{jk}^{(m)}$ as \emph{occurence/exposure rates} or simply \emph{OE rates}. In the rest of the present work, we focus solely on the OE rates. Computation of the $\hat{\mu}_{jk}^{(m)}$ are typically carried out by fitting a Poisson GLM with log-link function and log-exposure as offsets.

\subsection{Main results}\label{sec:main}

When fitting a Poisson regression model in practice, one chooses a partition for the hazard function upfront, and one assumes that the true hazard function is indeed constant on each of the subintervals. From a theoretical point of view, this allows asymptotic properties to be established via general results for maximum likelihood estimators. We refer to Chapter VI.1.2 of \cite{ABGK} and their Example VI.1.1 for background. Our approach is different. Instead of imposing assumptions on the form of the true hazard function $\mu_{jk}$, we assume that the intervals $I_n = [t_{m - 1}, t_m)$ in the partition are dynamic with lengths shrinking to zero at a proper rate. The following theorem provides an asymptotic normality result for the estimator $\hat{\mu}^{(m)}_{jk}$ of $\mu_{jk}(t)$ when $t \in [t_{m - 1}, t_m)$. 

\begin{thm}[Asymptotic normality (Markov)]\label{thm:PoissonMarkov}
Let $Z$ be a Markov process with intensities $\mu_{jk}$. Fix $j, k \in \mathcal{Z}$, $j \neq k$, and $t$. Assume $\mu_{jk}(t) > 0$, $\BE[N_{jk}(t)^4] < \infty$ for $t \geq 0$ and let $m = m_n$ be such that $t \in [t_{m - 1}, t_m)$. Assume $p_j^\texttt{c}$ and $\mu_{jk}$ are $C^1$ and $C^2$, respectively, in a neighbourhood of $t$ and $p_j^\texttt{c}(t) > 0$. Let $\Delta_n := t_m - t_{m - 1} \downarrow 0$, $n\Delta_n \to \infty$ and $n\Delta_n^3 \to 0$. Then
\begin{equation*}
	\sqrt{n \Delta_n}(\hat{\mu}^{(m)}_{jk} - \mu_{jk}(t)) \overset{d}{\longrightarrow} N\Big(0, \frac{\mu_{jk}(t)}{p_j^\texttt{c}(t)} \Big).
\end{equation*}
\end{thm}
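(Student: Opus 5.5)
We write $I_n=[t_{m-1},t_m)$ and normalise both occurrence and exposure by $n\Delta_n$. Set $\bar O_n = O_{jk}(m)/(n\Delta_n)$ and $\bar E_n = E_j(m)/(n\Delta_n)$, so that $\hat\mu^{(m)}_{jk}=\bar O_n/\bar E_n$. The starting point is the decomposition
\begin{equation*}
\sqrt{n\Delta_n}\big(\hat\mu^{(m)}_{jk}-\mu_{jk}(t)\big)=\frac{1}{\bar E_n}\Big(\sqrt{n\Delta_n}\,\big(\bar O_n-\mu_{jk}(t)\bar E_n\big)\Big).
\end{equation*}
The plan is to show three things: that $\bar E_n\to p_j^\texttt{c}(t)$ in probability, that the numerator is asymptotically $N(0,\mu_{jk}(t)p_j^\texttt{c}(t))$, and then to conclude by Slutsky.

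\textbf{Consistency of the exposure.} Put $Y_i=\Delta_n^{-1}\int_{I_n}1_{(s<R^i)}I^i_j(s)\,\dd s$. These are iid with $0\le Y_i\le 1$ and $\BE Y_i=\Delta_n^{-1}\int_{I_n}p_j^\texttt{c}(s)\,\dd s= p_j^\texttt{c}(t)+O(\Delta_n)$, by the $C^1$ assumption. Since $\Var(\bar E_n)\le 1/n\to0$, Chebyshev gives $\bar E_n\to p_j^\texttt{c}(t)$ in probability.

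\textbf{Numerator.} Using the $\CG$-compensator, write $\bar O_n-\mu_{jk}(t)\bar E_n=A_n+b_n$, where
\begin{equation*}
A_n=\frac{1}{n\Delta_n}\sum_{i=1}^n\Big(\int_{I_n}\dd M^i_{jk}(s\land R^i)\Big),\qquad
b_n=\frac{1}{n\Delta_n}\sum_{i=1}^n\int_{I_n}1_{(s< R^i)}I^i_j(s)\big(\mu_{jk}(s)-\mu_{jk}(t)\big)\,\dd s .
\end{equation*}

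For the bias, $|\mu_{jk}(s)-\mu_{jk}(t)|\le C\Delta_n$ on $I_n$, so $|b_n|\le C\Delta_n\bar E_n$ and $\sqrt{n\Delta_n}\,b_n=O_P(\sqrt{n\Delta_n^3})\to0$. Only local Lipschitz continuity is used here; the $C^2$ assumption gives room to spare.

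For $A_n$, set $X_{n,i}=\Delta_n^{-1/2}\int_{I_n}\dd M^i_{jk}(s\land R^i)$. These form a triangular array of iid mean-zero variables, and $\sqrt{n\Delta_n}A_n=n^{-1/2}\sum_i X_{n,i}$. By the predictable variation of a counting-process martingale with continuous compensator,
\begin{equation*}
\BE X_{n,i}^2=\Delta_n^{-1}\int_{I_n}p_j^\texttt{c}(s)\mu_{jk}(s)\,\dd s\to p_j^\texttt{c}(t)\mu_{jk}(t).
\end{equation*}
We then apply the Lyapunov CLT for triangular arrays with fourth moments, which requires $n^{-1}\BE X_{n,1}^4\to0$.

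\textbf{Main obstacle: the fourth-moment bound.} We need $\BE\big(\int_{I_n}\dd M\big)^4=O(\Delta_n)$, because then $n^{-1}\BE X_{n,1}^4=O(1/(n\Delta_n))\to0$. To get this, bound $\BE(\int_{I_n}\dd M)^4\lesssim \BE(O^i)^4+\BE(\Lambda^i)^4$, where $O^i$ is the count in $I_n$ and $\Lambda^i\le C\Delta_n$ is the compensator increment. The compensator term is $O(\Delta_n^4)$. For the count term, use $\BE(O^i)^4\le \BE[O^i(O^i-1)(O^i-2)(O^i-3)]+\dots$, expanding into factorial moments. The first factorial moment is $O(\Delta_n)$. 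The higher ones are bounded via $\BE[O^i 1_{(O^i\ge2)}]$, either using Cauchy--Schwarz with the finite fourth moment of $N_{jk}$, or more directly using the bounded local intensity to dominate by a Poisson count with mean $C\Delta_n$. Alternatively, Burkholder--Davis--Gundy gives $\BE(\int\dd M)^4\lesssim\BE[O^i]^2$-type bounds, and those are again $O(\Delta_n)$. This is the step that uses the hypothesis $\BE[N_{jk}(t)^4]<\infty$, and it needs the most care.

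Combining the three pieces, the numerator converges to $N(0,\mu_{jk}(t)p_j^\texttt{c}(t))$ and the denominator to $p_j^\texttt{c}(t)$. Slutsky then yields the limiting variance $\mu_{jk}(t)p_j^\texttt{c}(t)/p_j^\texttt{c}(t)^2=\mu_{jk}(t)/p_j^\texttt{c}(t)$, as stated in Theorem \ref{thm:PoissonMarkov}.
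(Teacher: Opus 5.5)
Your proof is correct, but it is organised differently from the paper's at the central step.

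\textbf{Decomposition.} The paper centres occurrences and exposures at their means. It splits $O_{jk}(m)-\mu_{jk}(t)E_j(m)$ into three pieces: $O_{jk}(m)-n\mu_n^{jk}$, the deterministic bias $n(\mu_n^{jk}-\mu_{jk}(t)\eta_n^j)$, and the exposure fluctuation $-\mu_{jk}(t)(E_j(m)-n\eta_n^j)$. This forces it to compute $\Var(X_{1,n})$ including the fluctuation of the compensator (Lemma~\ref{lem:meanvarianceMarkov}(3)). It also has to expand $\Var(Y_{1,n})$, run a separate Lindeberg argument for the exposure, and control the exposure fluctuation on its own.

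You instead centre the occurrences at their own random compensator. The CLT is then needed only for iid martingale increments with exact variance $\int_{I_n}p_j^\texttt{c}(s)\mu_{jk}(s)\,\dd s$. The exposure fluctuation is killed pathwise inside $b_n$ by the factor $\mu_{jk}(s)-\mu_{jk}(t)=O(\Delta_n)$; since $\bar E_n\le 1$, you even get $|\sqrt{n\Delta_n}\,b_n|\le C\sqrt{n\Delta_n^3}$ deterministically.

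This buys you several things. The exposure only needs a weak law, which $0\le Y_i\le 1$ and Chebyshev give at once. You use no more than continuity of $p_j^\texttt{c}$ at $t$ and local Lipschitz continuity of $\mu_{jk}$. You also sidestep a soft spot in the paper's exposure CLT: its normalisation tacitly needs $\Var(Y_{1,n})\gtrsim\Delta_n^2$, whereas after subtracting $(\eta_n^j)^2$ the leading term is really $\Delta_n^2p_j^\texttt{c}(t)(1-p_j^\texttt{c}(t))$.

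\textbf{Fourth moment.} The paper uses the It\^o identity of Corollary~\ref{cor:shiftedfourthmoment}. Your BDG bound $\BE(\int_{I_n}\dd M)^4\lesssim\BE[(O^i)^2]$ reaches the same $O(\Delta_n)$ from the same source, namely boundedness of the intensity near $t$. The paper's route is self-contained; yours imports BDG.

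\textbf{What the paper gains.} Its organisation produces explicit moment expansions, which it reuses for Corollary~\ref{cor:asymptoticIndependenceMarkov} and the semi-Markov case. Your decomposition would handle the corollary just as easily, since martingale increments over disjoint intervals are exactly uncorrelated.

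\textbf{One point to tighten.} Of the routes you list for $\BE(O^i)^4=O(\Delta_n)$, drop Cauchy--Schwarz against $\BE[N_{jk}(t)^4]<\infty$. Controlling the fourth factorial moment that way would need eighth moments, and the fourth moment alone only gives $\BE(O^i)^4\to 0$ by uniform integrability, with no rate.

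What does the work is the deterministic bound $1_{(Z_{s-}=j)}\mu_{jk}(s)\le C$ on $I_n$. Let $O_s$ count the censored $jk$-jumps in $[t_{m-1},s]$ and let $x^{(k)}$ denote the falling factorial. Then $\BE[(O^i)^{(k)}]=k\,\BE\int_{I_n}O_{s-}^{(k-1)}1_{(s\le R^i)}1_{(Z^i_{s-}=j)}\mu_{jk}(s)\,\dd s$. Induction gives $\BE[(O^i)^{(k)}]\le (C\Delta_n)^k$, hence both $\BE(O^i)^4=O(\Delta_n)$ and $\BE[(O^i)^2]=O(\Delta_n)$ for the BDG route.

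This argument uses no moment assumption on $N_{jk}$ at all. So, contrary to your remark, this step does not really rest on the fourth-moment hypothesis. Also, in the BDG bound write $\BE[(O^i)^2]$ rather than $\BE[O^i]^2$.
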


Before embarking on the proof, we introduce the notation
\begin{equation*}
    X_{i, n} := \int_{I_n} N_{jk}^i(\dd s \land R^i), \quad Y_{i, n} := \int_{I_n} 1_{(s < R^i)}1_{(Z_{s-}^i = j)}\dd s
\end{equation*}
so that
\begin{equation*}
    O_{jk}(m) = \sum_{i = 1}^n X_{i, n} \quad \text{and} \quad E_j(m) = \sum_{i = 1}^n Y_{i, n}. 
\end{equation*}
We also let $\mu_n^{jk} = \BE[X_{1, n}]$ and $\eta_n^j = \BE[Y_{1, n}]$. The following lemma collects some elementary results for $X_{i, n}$ and $Y_{i, n}$.

\begin{lem}\label{lem:meanvarianceMarkov}
Under the assumptions of Theorem \ref{thm:PoissonMarkov}, the following holds. 
\begin{enumerate}
    \item $\mu_n^{jk} = \int_{I_n} p_j^\texttt{c}(s)\mu_{jk}(s)\dd s = \Delta_n p_j^\texttt{c}(t) \mu_{jk}(t) + O(\Delta_n^2)$.
    \item $\eta_n^j = \int_{I_n} p_j^\texttt{c}(s)\dd s = \Delta_n p_j^\texttt{c}(t) + O(\Delta_n^2)$.
    \item If $\BE[N_{jk}(t)^2] < \infty$ for all $t \geq 0$, we have $\operatorname{Var}(X_{1, n}) = \Delta_n p_j^\texttt{c}(t) \mu_{jk}(t) + O(\Delta_n^2)$.
    \item $\operatorname{Var}(Y_{1, n}) = \Delta_n^2p_j^\texttt{c}(t) + O(\Delta_n^2)$.
\end{enumerate}
\end{lem}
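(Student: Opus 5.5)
The plan is to handle the four items in turn, using only the $\CG$-compensator of the stopped counting process, Fubini, and Taylor expansion around $t$ on the shrinking interval $I_n$.

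\textbf{Items 1 and 2.} Since $X_{1,n} = \int_{I_n} 1_{(s\le R)}\,\dd N_{jk}(s)$ and the $\CG$-compensator of $N_{jk}(\cdot\land R)$ is $\int 1_{(s\le R)}1_{(Z_{s-}=j)}\mu_{jk}(s)\dd s$, I will take expectations and use (\ref{eq:finitefirstmoment}) to get
\begin{equation*}
\mu_n^{jk}=\int_{I_n}\BE[1_{(s\le R)}1_{(Z_{s-}=j)}]\mu_{jk}(s)\dd s=\int_{I_n}p_j^\texttt{c}(s)\mu_{jk}(s)\dd s .
\end{equation*}
Replacing $Z_{s-}$ by $Z_s$ is harmless, since they differ only at countably many times and so on a Lebesgue-null set. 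Likewise, Fubini gives $\eta_n^j=\int_{I_n}p_j^\texttt{c}(s)\dd s$; whether one writes $s<R$ or $s\le R$ changes nothing under the integral. Because $p_j^\texttt{c}\mu_{jk}$ and $p_j^\texttt{c}$ are $C^1$ near $t$, the mean value theorem gives $|f(s)-f(t)|\le C\Delta_n$ for $s\in I_n$ and $n$ large. Hence $\int_{I_n}f=\Delta_n f(t)+O(\Delta_n^2)$.

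\textbf{Item 3.} I will write $\Var(X_{1,n})=\BE[X_{1,n}^2]-(\mu_n^{jk})^2$. The second term is $O(\Delta_n^2)$ by item 1. For the first, I will use $X^2 = X + X(X-1)$. Then $\BE[X_{1,n}]$ gives the leading term by item 1, and it remains to show $\BE[X_{1,n}(X_{1,n}-1)]=O(\Delta_n^2)$. Writing $X(X-1)=2\int_{I_n}(X(s-)-X(t_{m-1}))\,\dd X(s)$ and compensating gives
\begin{equation*}
\BE[X(X-1)]=2\int_{I_n}\BE\big[(N_{jk}(s-\land R)-N_{jk}(t_{m-1}\land R))1_{(s\le R)}1_{(Z_{s-}=j)}\big]\mu_{jk}(s)\dd s .
\end{equation*}
This requires $\BE[N_{jk}^2]<\infty$, so that the stochastic integral against the martingale has mean zero. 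The inner expectation is bounded by $\BE[N_{jk}(s)-N_{jk}(t_{m-1})]$, which equals $\int_{t_{m-1}}^s p_j(u)\mu_{jk}(u)\dd u=O(\Delta_n)$, with $\mu_{jk}$ bounded near $t$. Integrating over $I_n$ then gives $O(\Delta_n^2)$. Making this second factorial moment estimate rigorous is the only delicate step, specifically justifying the martingale argument. An alternative is to bound the probability of two or more jumps in $I_n$, but the compensator route seems cleanest.

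\textbf{Item 4.} This is trivial. Since $0\le Y_{1,n}\le\Delta_n$, we have $\Var(Y_{1,n})\le\BE[Y_{1,n}^2]\le\Delta_n^2$, which is $O(\Delta_n^2)$. The displayed form $\Delta_n^2p_j^\texttt{c}(t)+O(\Delta_n^2)$ is then immediate, because $\Delta_n^2p_j^\texttt{c}(t)$ is itself $O(\Delta_n^2)$.
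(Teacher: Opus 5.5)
Your proposal is correct. Items 1 and 2 follow the paper exactly, while items 3 and 4 take different routes.

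\textbf{Item 3.} The paper splits $X_{1,n}$ into the martingale increment $M_{jk}(t_m\land R)-M_{jk}(t_{m-1}\land R)$ plus the compensator increment. It gets $\mu_n^{jk}$ as the second moment of the martingale part from Lemma~\ref{lem:martingaleshiftedsecondmoment}. It then bounds the squared-compensator and cross terms by $O(\Delta_n^2)$, since the compensator increment is at most $C\Delta_n$ for large $n$.

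You instead use $\BE[X_{1,n}^2]=\mu_n^{jk}+\BE[X_{1,n}(X_{1,n}-1)]$ and compensate the factorial moment directly. This avoids the predictable-variation lemma and leaves a single nonnegative remainder. The paper's decomposition has the merit of being the one it reuses for the fourth-moment bound in Lemma~\ref{lem:Lyapunov}.

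The step you call delicate is not. Your integrand is nonnegative and predictable, and for such $H$ one has $\BE\int H\,\dd N=\BE\int H\,\dd\Lambda$ with no moment condition at all. Here $N$ is the censored counting process and $\Lambda$ its $\CG$-compensator.

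\textbf{Item 4.} The paper Taylor-expands $\iint_{I_n\times I_n}\BP(s\lor u<R,Z_s=j,Z_u=j)\,\dd s\,\dd u$ about $(t,t)$. That presupposes regularity of this two-time function, which is not contained in the hypotheses. Your bound $0\le Y_{1,n}\le\Delta_n$ proves the statement as written with no extra assumption. You are also right that the $O(\Delta_n^2)$ remainder absorbs the displayed leading term.

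A careful expansion would give more, namely $\Var(Y_{1,n})=\Delta_n^2p_j^{\texttt{c}}(t)(1-p_j^{\texttt{c}}(t))+o(\Delta_n^2)$. That yields a matching lower bound when $p_j^{\texttt{c}}(t)<1$, which the Lindeberg check for $E_j(m)$ in the proof of Theorem~\ref{thm:PoissonMarkov} tacitly uses. But the lemma does not claim this. Your bound already gives $E_j(m)-n\eta_n^j=O_{\BP}(\sqrt{n}\,\Delta_n)$ by Chebyshev, which is all the exposure part of that proof actually requires.
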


The following technical lemma is a crude but elementary way to establish a Lindeberg condition for the occurences. The proof of this result and the previous lemma can be found in the appendix.

\begin{lem}[Lyapunov condition (Markov)]\label{lem:Lyapunov}
If $\BE[N_{jk}(t)^4] < \infty$, $\Delta_n \downarrow 0$ and $n\Delta_n \to \infty$, the Lyapunov condition of order 4 holds for the triangular array $W_{i,n}$ given by
\begin{equation*}
    W_{i,n} = \frac{X_{i, n} - \mu_n^{jk}}{\sqrt{\operatorname{Var}(\sum_{i = 1}^n X_{i, n})}}.
\end{equation*}
\end{lem}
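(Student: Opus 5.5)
We need to prove the Lyapunov condition of order 4: $\sum_{i=1}^n \BE[W_{i,n}^4]\to 0$. Since the $X_{i,n}$ are iid in $i$, this reduces to
\begin{equation*}
    \sum_{i=1}^n \BE[W_{i,n}^4] = \frac{n\,\BE[(X_{1,n}-\mu_n^{jk})^4]}{n^2\operatorname{Var}(X_{1,n})^2}
    = \frac{\BE[(X_{1,n}-\mu_n^{jk})^4]}{n\operatorname{Var}(X_{1,n})^2}.
\end{equation*}
Lemma \ref{lem:meanvarianceMarkov}(3) gives $\operatorname{Var}(X_{1,n}) = \Delta_n p_j^\texttt{c}(t)\mu_{jk}(t) + O(\Delta_n^2)$. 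Hence, for $n$ large and with $c := p_j^\texttt{c}(t)\mu_{jk}(t) > 0$ (positivity as in Theorem \ref{thm:PoissonMarkov}), the denominator is bounded below by $n(c/2)^2\Delta_n^2$. It therefore suffices to show that $\BE[(X_{1,n}-\mu_n^{jk})^4] = O(\Delta_n)$. Then the ratio is $O(1/(n\Delta_n))\to 0$ because $n\Delta_n\to\infty$.

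For the numerator, first expand $(X-\mu)^4 \le 8(X^4+\mu^4)$. Since $\mu_n^{jk} = O(\Delta_n)$, the term $\mu^4 = O(\Delta_n^4)$ is negligible, so it remains to bound $\BE[X_{1,n}^4]$. Here $X_{1,n}$ is a nonnegative integer (the number of $j\to k$ jumps in $I_n$ before censoring), so $X^4 \le X\cdot X^3$. The key step is to show $\BE[X_{1,n}^4]=O(\Delta_n)$. One route uses integer-valuedness: $X^4 = X + (X^4 - X)1_{(X\ge 2)}$, with $\BE[X] = \mu_n^{jk} = O(\Delta_n)$.

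The obstacle is the term $\BE[X^4 1_{(X\ge2)}]$. Write $I_n=[a,a+\Delta_n)$ and bound $X_{1,n}\le N_{jk}(a+\Delta_n)-N_{jk}(a) =: D_n$. Since $D_n \le N_{jk}(T)$ for $T$ fixed above all $I_n$ (they shrink to $t$), we have $D_n^4 \le N_{jk}(T)^4$, which is integrable by assumption. Moreover $D_n\to 0$ almost surely, since the paths have finitely many jumps on compacts and no jump at the fixed time $t$ a.s. (the compensator is absolutely continuous). Dominated convergence alone only yields $o(1)$, however, not $O(\Delta_n)$. To obtain the rate, condition on the first jump. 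Write $D^4 \le D\,N_{jk}(T)^3$ and $D = \int_{I_n} dN_{jk}(s)$, so that
\begin{equation*}
    \BE[D\,N_{jk}(T)^3] = \BE\int_{I_n} N_{jk}(T)^3\, dN_{jk}(s).
\end{equation*}
The integrand is not predictable, so the compensator cannot be applied directly. Instead bound $N_{jk}(T)^3 \le (N_{jk}(s-)+1+(N_{jk}(T)-N_{jk}(s)))^3$ and use the Markov property: conditionally on $\CF_s$, the future increment has third moment bounded uniformly in $s\le T$ by a finite constant $K$, again by the moment assumption together with finiteness of the state space. After this conditioning, the integrand is dominated by a predictable process $H(s)$ with $\BE[H(s)]$ bounded, and the compensator gives
\begin{equation*}
    \BE\int_{I_n} H(s)\, 1_{(Z_{s-}=j)}\mu_{jk}(s)\,ds \le C\,\Delta_n\sup_{I_n}\mu_{jk}.
\end{equation*}
This is $O(\Delta_n)$ by continuity of $\mu_{jk}$ near $t$. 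It concludes $\BE[X_{1,n}^4]=O(\Delta_n)$ and hence the Lyapunov condition. If the uniform conditional-moment step proves delicate, I would alternatively use Hölder to get $\BE[D^4]\le \BE[D]^{1/2}\,\BE[D^7]^{1/2}$, but that would need higher moments. I expect this conditional step to be where care is needed.
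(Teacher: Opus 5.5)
Your reduction matches the paper's: by the iid structure and Lemma~\ref{lem:meanvarianceMarkov}(3), everything rests on $\BE[(X_{1,n}-\mu_n^{jk})^4]=O(\Delta_n)$. Your handling of the $\mu_n^{jk}$ term and of censoring via $X_{1,n}\le D_n$ is fine. The gap is in the proof of $\BE[D_n^4]=O(\Delta_n)$, which is the whole content of the lemma.

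The bound $D_n^4\le D_nN_{jk}(T)^3$ creates an anticipating integrand in $\int_{I_n}N_{jk}(T)^3\,\dd N_{jk}(s)$. Neither of the two steps you use to remove it is justified.
\begin{itemize}
\item First, $(N_{jk}(T)-N_{jk}(s))^3$ is not dominated pointwise by any predictable process. What you actually need is to replace it by its conditional expectation at the jump times $\tau$ of $N_{jk}$. That requires an optional-projection argument together with the strong Markov property at $\tau$ (where $Z_\tau=k$), and you do not supply it.
\item Second, the resulting quantity $g(s,k)=\BE[(N_{jk}(T)-N_{jk}(s))^3\mid Z_s=k]$ is not bounded ``by the moment assumption together with finiteness of the state space''. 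The condition $\BE[N_{jk}(T)^4]<\infty$ is a statement under the given initial law, and it only yields $g(s,k)\le\BE[N_{jk}(T)^3]/\BP(Z_s=k)$. You would additionally need $\BP(Z_s=k)$ bounded away from zero on $I_n$. That can be obtained from $p_j^\texttt{c}(t)\mu_{jk}(t)>0$ and $P_{kk}>0$, but only with further work. A bound uniform over all $s\le T$, as you state it, is not available in general.
\end{itemize}
As written, the decisive estimate is asserted rather than proved.

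The detour through the future is unnecessary. Set $D(s):=N_{jk}(s)-N_{jk}(a)$. Because the jumps have size one, you have the telescoping identity $D_n^4=\int_{(a,a+\Delta_n]}\big[(D(s-)+1)^4-D(s-)^4\big]\,\dd N_{jk}(s)$. Its integrand is nonnegative and predictable, so the compensator applies directly:
\[
\BE[D_n^4]=\BE\int_a^{a+\Delta_n}\big(4D(s-)^3+6D(s-)^2+4D(s-)+1\big)1_{(Z_{s-}=j)}\mu_{jk}(s)\,\dd s\le 15\big(1+\BE[N_{jk}(T)^3]\big)\Delta_n\sup_{I_n}\mu_{jk}=O(\Delta_n).
\]
No Markov property is needed here.

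This is the same mechanism the paper uses, applied to $N_{jk}$ directly instead of to its martingale part. The paper splits $X_{i,n}-\mu_n^{jk}$ into the martingale increment $\int_{I_n}1_{(s\le R^i)}\,\dd M^i_{jk}(s)$ plus a centred compensator term bounded by $C\Delta_n$, and applies Minkowski. It then controls the martingale part with the It\^o-formula fourth-moment identity of Corollary~\ref{cor:shiftedfourthmoment}, whose right-hand side is an integral over $I_n$ of a bounded quantity.
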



\begin{proof}[Proof of Theorem~\ref{thm:PoissonMarkov}]
We have
\begin{equation*}
	\frac{O_{jk}(m) - n\mu_n^{jk}}{\sqrt{\operatorname{Var}(\sum_{i = 1}^n X_{i, n})}} = \sum_{i = 1}^n \frac{X_{i, n} - \mu_n^{jk}}{\sqrt{\operatorname{Var}(\sum_{i = 1}^n X_{i, n})}},
\end{equation*}
and the terms in this sum are independent with mean-zero. By construction, the sum of the variances equals 1. By Lemma~\ref{lem:Lyapunov}, the Lindeberg condition holds and we can thus conclude by Lindeberg's CLT, Slutsky's lemma and Lemma~\ref{lem:meanvarianceMarkov} that
\begin{equation*}
	\frac{O_{jk}(m) - n\mu_n^{jk}}{\sqrt{n\Delta_n}} \overset{d}{\to} N(0, \mu_{jk}(t)p_j^\texttt{c}(t)).
\end{equation*}
We now claim that
\begin{equation*}
	\frac{E_j(m) - n\eta_n^j}{\sqrt{\operatorname{Var}(\sum_{i = 1}^n Y_{i, n})}} = O_\BP(1)
\end{equation*}
by showing convergence in distribution to $N(0, 1)$. We have
    \begin{equation*}
        \frac{E_j(m) - n\eta_n^j}{\sqrt{\operatorname{Var}(\sum_{i = 1}^n Y_{i, n})}} = \sum_{i = 1}^n \frac{Y_{i, n} - \eta_n^j}{\sqrt{\operatorname{Var}(\sum_{i = 1}^n Y_{i, n})}} =: \sum_{i = 1}^n A_{i, n}
    \end{equation*}
    and the $A_{i, n}$ are independent with mean-zero. We can verify Lindeberg's condition directly using Lemma \ref{lem:meanvarianceMarkov} as follows. Let $c > 0$ be arbitrary. Then if we define $s_n = \operatorname{Var}(\sum_{i = 1}^n Y_{i, n})$, we get from Chebyshev's inequality that
    \begin{align*}
        \BE[A_{i, n}^2 1_{(|A_{i, n}| > c)}] &= \frac{\BE\Big[(Y_{i, n} - \eta_n^j)^2 1_{(|Y_{i, n} - \eta_n^j| > c\sqrt{s_n})}\Big]}{s_n} \\
        &\leq \frac{4\Delta_n^2 \BP(|Y_{i, n} - \eta_n^j| > c\sqrt{s_n})}{s_n} \\
        &\leq \frac{4}{c^2} \cdot \frac{\Delta_n^2 \Var(Y_{1, n})}{s_n^2} = O\left(\frac{\Delta_n^4}{n^2 \Delta_n^4} \right) \\
        &= O\left(\frac{1}{n^2}\right),
    \end{align*}
    so that
    \begin{equation*}
        \sum_{i = 1}^n \BE[A_{i, n}^2 1_{(|A_{i, n}| > c)}] = O\left(\frac{1}{n}\right) \to 0.
    \end{equation*}
    From Lindeberg's CLT, we have
    \begin{equation*}
        \frac{E_j(m) - n\eta_n^j}{\sqrt{\operatorname{Var}(\sum_{i = 1}^n Y_{i, n})}} \overset{d}{\to} N(0, 1)
    \end{equation*}
    and so the left hand side is bounded in probability as claimed. Thus,
\begin{align*}
    E_j(m) &= \sum_{i = 1}^n (Y_{i, n} - \eta_n^j) + n\eta_n^j = \sqrt{\operatorname{Var}\Big(\sum_{i = 1}^n Y_{i, n}\Big)} \sum_{i = 1}^n \frac{Y_{i, n} - \eta_n^j}{\sqrt{\operatorname{Var}(\sum_{i = 1}^n Y_{i, n})}} + n \eta_n^j \\
    &= \sqrt{n\Delta_n^2p_j^\texttt{c}(t)(1 + O(1))} O_\BP(1) + n\Delta_n (p_j^\texttt{c}(t) + O(\Delta_n)) \\
    &= n\Delta_n (p_j^\texttt{c}(t) + o_\BP(1)).
\end{align*}
We have
\begin{equation*}
    \hat{\mu}_{jk}^{(m)} - \mu_{jk}(t) = \frac{O_{jk}(m) - n\mu_n^{jk}}{E_j(m)} + \frac{n\mu_n^{jk} - \mu_{jk}(t) E_j(m)}{E_j(m)} =: (1) + (2).
\end{equation*}
Term (1) multiplied with $\sqrt{n\Delta_n}$ converges to the desired asymptotic distribution by the asymptotic expression for $E_j(m)$ and a Slutsky argument. It remains to show that (2) goes to zero in probability. We can further decompose
\begin{equation*}
    (2) = \frac{n(\mu_n^{jk} - \mu_{jk}(t)\eta_n^j)}{E_j(m)} - \frac{\mu_{jk}(t)(E_j(m) - n\eta_n^j)}{E_j(m)} =: (i) - (ii).
\end{equation*}
Since $\operatorname{Var}(E_j(m) - n \eta_n^j) = O(n \Delta_n^2)$, an application of Chebyshev's inequality yields $E_j(m) - n\eta_n^j = O_\BP(\sqrt{n} \Delta_n)$ and so
\begin{equation*}
    \sqrt{n \Delta_n} (ii) = O_\BP\Big(\sqrt{n \Delta_n} \frac{\sqrt{n} \Delta_n}{n\Delta_n} \Big) = O_\BP(\Delta_n)
\end{equation*}
which goes to zero. As for (i),
\begin{equation*}
    \sqrt{n \Delta_n}(i) = O_\BP\Big(\sqrt{n \Delta_n} \frac{n \Delta_n^2}{n \Delta_n} \Big) = O_\BP(\sqrt{n \Delta_n^3})
\end{equation*}
which also goes to zero by assumption. All in all, $\sqrt{n \Delta_n} (2)$ goes to zero in probability, and the proof is complete. 
\end{proof}

If we choose distinct timepoints $s \neq t$, the time intervals eventually do not overlap due to the assumption $\Delta_n \to 0$. Hence we expect the occurence/exposure rates for the intervals containing $s$ and $t$ to become asymptotically independent. This is indeed the case as the following corollary shows. Since $t$ has so far been considered fixed, we have suppressed the dependence on $t$ in the notation for the interval of $t$. To emphasise this dependence, we write $m(t), X_{i, n}(t), Y_{i, n}(t), \mu_n^{jk}(t), \eta_n^j(t)$ and $I_n(t)$ in the following. As the partition is assumed equidistant, $\Delta_n$ is independent of $t$.

\begin{cor}[Asymptotic independence for distinct timepoints (Markov)]\label{cor:asymptoticIndependenceMarkov}
Under the assumptions in Theorem \ref{thm:PoissonMarkov}, it holds for $t, s \geq 0$ with $t \neq s$ and $p_j^\texttt{c}(t), p_j^\texttt{c}(s) > 0$ that
\begin{equation*}
    \sqrt{n\Delta_n}(\hat{\mu}^{m(t)} - \mu_{jk}(t), \hat{\mu}^{m(s)} - \mu_{jk}(s)) \overset{d}{\longrightarrow} (G(t), G(s))
\end{equation*}
with $G(t)$ and $G(s)$ independent and
\begin{equation*}
    G(t') \sim N\Big(0, \frac{\mu_{jk}(t')}{p_j^\texttt{c}(t')}\Big).
\end{equation*}
\end{cor}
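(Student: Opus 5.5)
The plan is to reduce the bivariate statement to a bivariate CLT for the occurrences, using the Cramér--Wold device, and to reuse the decomposition from the proof of Theorem \ref{thm:PoissonMarkov} for each coordinate. That proof gives, for $t' \in \{t, s\}$,
\begin{equation*}
    \sqrt{n\Delta_n}\big(\hat{\mu}^{m(t')} - \mu_{jk}(t')\big) = \frac{1}{p_j^\texttt{c}(t') + o_\BP(1)} \cdot \frac{O_{jk}(m(t')) - n\mu_n^{jk}(t')}{\sqrt{n\Delta_n}} + o_\BP(1).
\end{equation*}
Here we use $E_j(m(t')) = n\Delta_n(p_j^\texttt{c}(t') + o_\BP(1))$, and term (2) multiplied by $\sqrt{n\Delta_n}$ is $o_\BP(1)$. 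These facts were established coordinatewise and do not depend on the other coordinate. So, by the multivariate Slutsky lemma, it suffices to show that
\begin{equation*}
    \frac{1}{\sqrt{n\Delta_n}}\Big(O_{jk}(m(t)) - n\mu_n^{jk}(t),\; O_{jk}(m(s)) - n\mu_n^{jk}(s)\Big) \overset{d}{\longrightarrow} (H(t), H(s)),
\end{equation*}
where $H(t), H(s)$ are independent and $H(t') \sim N(0, \mu_{jk}(t')p_j^\texttt{c}(t'))$. Dividing the $t'$-coordinate by $p_j^\texttt{c}(t')$ then gives the required variance $\mu_{jk}(t')/p_j^\texttt{c}(t')$.

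For the bivariate CLT, fix $a, b \in \BR$ and consider the row-wise iid array
\begin{equation*}
    V_{i,n} = \frac{a\big(X_{i,n}(t) - \mu_n^{jk}(t)\big) + b\big(X_{i,n}(s) - \mu_n^{jk}(s)\big)}{\sqrt{n\Delta_n}}.
\end{equation*}
Its total variance is $\Delta_n^{-1}\big(a^2\Var(X_{1,n}(t)) + b^2\Var(X_{1,n}(s)) + 2ab\Cov(X_{1,n}(t), X_{1,n}(s))\big)$. By Lemma \ref{lem:meanvarianceMarkov}(3), the diagonal terms converge to $a^2\mu_{jk}(t)p_j^\texttt{c}(t) + b^2\mu_{jk}(s)p_j^\texttt{c}(s)$. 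The Lindeberg condition follows from Lemma \ref{lem:Lyapunov}: by Minkowski's inequality, $\BE[V_{i,n}^4]$ is bounded by a constant times the sum of the fourth moments of the two coordinate terms, each of which was shown to be negligible. Lindeberg's CLT and Cramér--Wold then conclude, provided the covariance term is $o(\Delta_n)$.

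The main obstacle is the covariance. Since $\Delta_n \to 0$, the intervals $I_n(t)$ and $I_n(s)$ are disjoint for large $n$, but the counts in them are \emph{not} independent, because they come from the same path. I would write
\begin{equation*}
    \Cov(X_{1,n}(t), X_{1,n}(s)) = \BE[X_{1,n}(t)X_{1,n}(s)] - \mu_n^{jk}(t)\mu_n^{jk}(s).
\end{equation*}
By Lemma \ref{lem:meanvarianceMarkov}(1), the product of the means is $O(\Delta_n^2)$. For the cross moment, take $s < t$ without loss of generality and use Cauchy--Schwarz together with a conditioning argument. Conditioning on $\CG$ at the left endpoint of $I_n(t)$, which lies after $I_n(s)$ for large $n$, gives
\begin{equation*}
    \BE\big[X_{1,n}(t) \mid \CG_{t_{m(t)-1}}\big] \le \int_{I_n(t)} \mu_{jk}(u)\,\dd u \le C\Delta_n,
\end{equation*}
using the compensator and boundedness of $\mu_{jk}$ near $t$. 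Hence
\begin{equation*}
    \BE[X_{1,n}(t)X_{1,n}(s)] \le C\Delta_n \BE[X_{1,n}(s)] = O(\Delta_n^2).
\end{equation*}
So the covariance is $O(\Delta_n^2) = o(\Delta_n)$, the limiting covariance matrix is diagonal, and the jointly Gaussian limit has independent components, which yields the statement.
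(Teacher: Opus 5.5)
Your proposal is correct and follows essentially the same route as the paper's proof. Both use Cram\'er--Wold, reduce to the centred occurrences via the decomposition and the expansion $E_j(m(t')) = n\Delta_n(p_j^\texttt{c}(t') + o_\BP(1))$ from the proof of Theorem~\ref{thm:PoissonMarkov}, obtain a fourth-moment Lyapunov bound via Minkowski, and get an $O(\Delta_n^2)$ covariance by conditioning on the $\mathcal{G}$-past at the grid point separating the two bins. The only difference is minor: you bound $\BE[X_{1,n}(t)X_{1,n}(s)]$ directly by conditioning the nonnegative later count on $\mathcal{G}_{t_{m(t)-1}}$, whereas the paper splits both counts into martingale and compensator parts and treats four cross terms; your conditioning time is the right one, while the paper's stated conditioning on $\mathcal{G}_{t_{m(s)}}$ of the earlier increment, read literally, does not give zero.
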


\section{The semi-Markov model}\label{sec:semiMarkov}

\subsection{Model specification}

We say that $Z$ is semi-Markov if the joint process $(Z, U)$ is Markov, where $U$ denotes the \emph{duration process} given by
\begin{equation*}
    U_t = \inf\{r > 0 : Z_{t - r} \neq Z_t\},
\end{equation*}
that is, the time since the previous jump. To specify $Z$, we again use the compensators of the counting processes $N_{jk}$. We assume that these are given by
\begin{equation*}
    t \mapsto \int_0^t 1_{(Z_{s-} = j)}\mu_{jk}(s, U_{s-})\dd s
\end{equation*}
where the $\mu_{jk}$ are measurable and sufficiently regular (duration-dependent) transition intensities. The exact requirements on the $\mu_{jk}$ will be specified along the way. As in the Markov case, it holds for all $t, u \geq 0$ and $j, k \in \mathcal{Z}$ that
\begin{equation*}
    \BP(Z_{t + h} = k \mid Z_t = j, U_t = u) = \delta_{jk} + \mu_{jk}(t, u)h + o(h).
\end{equation*}
In the following, we consider a censored occupation probability which also takes duration into account, namely
\begin{equation*}
    p_j^\texttt{c}(t, u) := \BP(Z_t = j, U_t \leq u, t \leq R).
\end{equation*}
In the following, the partial derivative of $p_j^\texttt{c}$ with respect to duration will play a prominent role. We denote this derivative in the point $(t, u)$ (when it exists) by $\partial_2 p_j^\texttt{c}(t, u)$. All the filtration considerations are the same as in the Markov case.

\subsection{Poisson regression for the semi-Markov model}

As in the Markov case, we briefly cover the background for the Poisson regression method for semi-Markov processes. Disregarding the initial probabilities, the right-censored log-likelihood $\ell_n(\theta)$ becomes
\begin{equation*}
    \ell_n(\theta) = \sum_{i = 1}^n \sum_{j \neq k} \left(\int_0^{R^i} \log \mu_{jk}(s, U^i_{s-} \mid \theta) \dd N_{jk}^i(s) - \int_0^{R^i} I_j^i(s) \mu_{jk}(s, U^i_{s-} \mid \theta)\dd s \right).
\end{equation*}
Assuming that $\mu_{jk}(s, v \mid \theta)$ is piecewise constant on a two-dimensional grid $[t_{m_1 - 1}, t_{m_1}) \times [u_{m_2 - 1}, u_{m_2})$,
\begin{equation*}
    \mu_{jk}(s, v \mid \theta) = \mu_{jk}^{(m_1, m_2)}, \quad (s, v) \in [t_{m_1 - 1}, t_{m_1}) \times [u_{m_2 - 1}, u_{m_2}),
\end{equation*}
the log-likelihood simplifies to
\begin{equation*}
    \ell_n(\theta) = \sum_{m_1, m_2} \sum_{i = 1}^n \Big(O_{jk}(m_1, m_2) \log \mu_{jk}^{(m_1, m_2)} - E_j(m_1, m_2) \mu_{jk}^{(m_1, m_2)} \Big)
\end{equation*}
with $O_{jk}(m_1, m_2)$ and $E_j(m_1, m_2)$ again denoting occurence and exposure in the box $[t_{m_1 - 1}, t_{m_1}) \times [u_{m_2 - 1}, u_{m_2})$,
\begin{align*}
    O_{jk}(m_1, m_2) &= \sum_{i = 1}^n \int_0^{R^i} 1_{(t_{m_1 - 1} \leq s < t_{m_1})} 1_{(u_{m_2 - 1} \leq U^i_{s-} < u_{m_2})} \dd N_{jk}^i(s), \\
    E_j(m_1, m_2) &= \sum_{i = 1}^n \int_0^{R^i} 1_{(t_{m_1 - 1} \leq s < t_{m_1})} 1_{(u_{m_2 - 1} \leq U^i_{s-} < u_{m_2})} 1_{(Z_{s-}^i = j)}\dd s.
\end{align*}
By the same arguments as in the Markov case, the MLE of $\mu_{jk}^{(m_1, m_2)}$ is given by the OE rate
\begin{equation*}
    \hat{\mu}_{jk}^{(m_1, m_2)} = \frac{O_{jk}(m_1, m_2)}{E_j(m_1, m_2)}.
\end{equation*}

\subsection{Main results}

In the Poisson regression for a semi-Markov process, we make a partition into boxes $I_n^{(1)} \times I_n^{(2)}$ with $I_n^{(1)} := [t_{m_1 - 1}, t_{m_1})$ the time interval and $I_n^{(2)} := [u_{m_2 - 1}, u_{m_2})$ the duration interval. We let $\Delta_n^{(1)} := t_{m_1} - t_{m_1 - 1}$ and $\Delta_n^{(2)} := u_{m_2} - u_{m_2 - 1}$. 

\begin{thm}[Asymptotic normality (semi-Markov)]\label{thm:PoissonSemiMarkov}
Let $Z$ be a semi-Markov process with intensities $\mu_{jk}$. Fix $j, k \in \mathcal{Z}$, $j \neq k$, a time $t$ and a duration $u$ with $t > u$. Assume $\mu_{jk}(t, u) > 0$, $\BE[N_{jk}(t)^4] < \infty$ for all $t \geq 0$ and let $(m_1, m_2) = (m_{1, n}, m_{2, n})$ be such that $(t, u) \in I_n^{(1)} \times I_n^{(2)}$. Assume $p_j^c$ and $\mu_{jk}$ are $C^2$ in a neighbourhood of $(t, u)$ with $\partial _2p_j^\texttt{c}(t, u) > 0$. Under the restrictions
\begin{enumerate}
    \item[(i)] $\Delta_n^{(1)} \to 0$ and $\Delta_n^{(2)} \to 0$,
    \item[(ii)] $n\Delta_n^{(1)}\Delta_n^{(2)} \to \infty$,
    \item[(iii)] $n\Delta_n^{(1)}\big(\Delta_n^{(2)} \big)^3 \to 0$ and $n\big(\Delta_n^{(1)}\big)^3 \Delta_n^{(2)} \to 0$,
\end{enumerate}
it holds that
\begin{equation*}
    \sqrt{n\Delta_n^{(1)}\Delta_n^{(2)}}\Big(\hat{\mu}_{jk}^{(m_1, m_2)} - \mu_{jk}(t, u)\Big) \overset{d}{\longrightarrow} N\left(0, \frac{\mu_{jk}(t, u)}{\partial_2 p_j^c(t, u)}\right).
\end{equation*}
\end{thm}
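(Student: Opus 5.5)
The plan is to mirror the proof of Theorem \ref{thm:PoissonMarkov}, replacing the interval $I_n$ by the box $B_n := I_n^{(1)} \times I_n^{(2)}$ and $\Delta_n$ by $\delta_n := \Delta_n^{(1)}\Delta_n^{(2)}$. Define
\begin{equation*}
    X_{i,n} := \int_0^{R^i} 1_{(s \in I_n^{(1)})}1_{(U^i_{s-} \in I_n^{(2)})}\dd N^i_{jk}(s), \quad Y_{i,n} := \int_0^{R^i} 1_{(s \in I_n^{(1)})}1_{(U^i_{s-} \in I_n^{(2)})}1_{(Z^i_{s-} = j)}\dd s,
\end{equation*}
so that $O_{jk}(m_1,m_2) = \sum_i X_{i,n}$ and $E_j(m_1,m_2) = \sum_i Y_{i,n}$, with $\mu_n := \BE[X_{1,n}]$ and $\eta_n := \BE[Y_{1,n}]$.

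The first step is the semi-Markov analogue of Lemma \ref{lem:meanvarianceMarkov}. By the compensator form, Fubini, and the random right-censoring assumption (which keeps the compensator unchanged under $\CG$), we get
\begin{equation*}
    \eta_n = \int_{I_n^{(1)}} \BP(Z_s = j, U_s \in I_n^{(2)}, s \le R)\,\dd s = \int_{I_n^{(1)}}\big(p_j^\texttt{c}(s,u_{m_2}) - p_j^\texttt{c}(s,u_{m_2-1})\big)\dd s
\end{equation*}
(left limits can be ignored since jump times form a Lebesgue-null set), and
\begin{equation*}
    \mu_n = \int_{B_n} \mu_{jk}(s,v)\,\partial_2 p_j^\texttt{c}(s,v)\,\dd v\,\dd s .
\end{equation*}
Since $t>u$, the box eventually lies in the region $s>v$, where $U_s$ has a density, so $\partial_2 p_j^\texttt{c}$ is well-defined and $C^1$ there. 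Taylor expansion around $(t,u)$ then gives
\begin{equation*}
    \mu_n = \delta_n\,\mu_{jk}(t,u)\,\partial_2 p_j^\texttt{c}(t,u) + O\big(\delta_n(\Delta_n^{(1)} + \Delta_n^{(2)})\big),
\end{equation*}
and similarly $\eta_n = \delta_n\,\partial_2 p_j^\texttt{c}(t,u) + O(\delta_n(\Delta_n^{(1)}+\Delta_n^{(2)}))$. For the variances, $X_{1,n} \le X_{1,n}^2$ because $X_{1,n}$ is integer-valued, and the probability of two or more jumps in the box is of smaller order. This yields
\begin{equation*}
    \Var(X_{1,n}) = \delta_n\,\mu_{jk}(t,u)\,\partial_2 p_j^\texttt{c}(t,u)\,(1+o(1)).
\end{equation*}
Since $0 \le Y_{1,n} \le \Delta_n^{(1)}$ and $Y_{1,n}$ is nonzero only on an event of probability $O(\Delta_n^{(1)}+\Delta_n^{(2)})$, we also get $\Var(Y_{1,n}) \le \BE[Y_{1,n}^2] \le \Delta_n^{(1)}\eta_n = O\big((\Delta_n^{(1)})^2\Delta_n^{(2)}\big)$.

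Next, the CLT for the occurrences follows the argument of Lemma \ref{lem:Lyapunov}. A fourth-moment Lyapunov bound requires $n\BE[(X_{1,n}-\mu_n)^4]/(n\delta_n)^2 \to 0$. Since $X_{1,n} \le N_{jk}(\infty\wedge\cdot)$ restricted to a time window, $\BE[X_{1,n}^4] \le \BE[X_{1,n}\,N_{jk}(T)^3]$, and the expected number of jumps in a window is $O(\delta_n)$. Using this together with H\"older's inequality and the finite fourth moments should give $\BE[X_{1,n}^4] = O(\delta_n)$. The Lyapunov ratio is then $O(1/(n\delta_n)) \to 0$ by (ii). Lindeberg's CLT and Slutsky give
\begin{equation*}
    \frac{O_{jk} - n\mu_n}{\sqrt{n\delta_n}} \overset{d}{\to} N\big(0,\mu_{jk}(t,u)\,\partial_2 p_j^\texttt{c}(t,u)\big).
\end{equation*}
For the exposures, Chebyshev's inequality gives
\begin{equation*}
    E_j - n\eta_n = O_\BP\Big(\sqrt{n}\,\Delta_n^{(1)}\sqrt{\Delta_n^{(2)}}\Big) = o_\BP(n\delta_n),
\end{equation*}
because $\sqrt{n}\,\Delta_n^{(1)}\sqrt{\Delta_n^{(2)}}/(n\delta_n) = (n\Delta_n^{(2)}/\Delta_n^{(1)})^{-1/2}$, and $\Delta_n^{(2)}/\Delta_n^{(1)} \ge \delta_n$ eventually, which makes the quotient at most $(n\delta_n)^{-1/2}\to 0$. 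Hence $E_j = n\delta_n(\partial_2 p_j^\texttt{c}(t,u) + o_\BP(1))$. Note that there is no need for a CLT for $E_j$; Chebyshev suffices.

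Finally, decompose $\hat\mu - \mu_{jk}(t,u)$ into terms (1), (i) and (ii) as in the Markov proof. Term (1) yields the limit after dividing by $E_j/(n\delta_n) \to \partial_2 p_j^\texttt{c}(t,u)$. For the bias term,
\begin{equation*}
    \sqrt{n\delta_n}\,(i) = O\Big(\sqrt{n\delta_n}\,(\Delta_n^{(1)}+\Delta_n^{(2)})\Big) = O\Big(\sqrt{n\Delta_n^{(1)}(\Delta_n^{(2)})^3} + \sqrt{n(\Delta_n^{(1)})^3\Delta_n^{(2)}}\Big) \to 0
\end{equation*}
by (iii). For the remaining stochastic term,
\begin{equation*}
    \sqrt{n\delta_n}\,(ii) = O_\BP\Big(\sqrt{n\delta_n}\,\sqrt{n}\,\Delta_n^{(1)}\sqrt{\Delta_n^{(2)}}/(n\delta_n)\Big) = O_\BP\big(\sqrt{\Delta_n^{(1)}}\big) \to 0.
\end{equation*}
The main obstacle is the first step. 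One must justify that $p_j^\texttt{c}(s,\cdot)$ is differentiable in $v$ uniformly for $s$ near $t$. One must also identify $\BP(Z_s=j, U_{s-}\in\dd v, s\le R)$ with $\partial_2 p_j^\texttt{c}(s,v)\,\dd v$, handling the left limit $U_{s-}$ versus $U_s$ and the mass of $U_s$ at $v=s$, which is excluded by $t>u$. Finally, the fourth-moment bound $\BE[X_{1,n}^4]=O(\delta_n)$ must be established; if H\"older is too crude there, I would instead bound $\BP(X_{1,n}\ge 2)$ via the compensator conditioned on the first jump in the box.
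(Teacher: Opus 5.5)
Your outline follows the paper's own proof: the same $X_{i,n},Y_{i,n}$, Taylor expansions of $\mu_n,\eta_n$, a Lyapunov CLT for $O_{jk}$, a law of large numbers for $E_j$, and the split into (1), (i), (ii) with the paper's rates. Your exposure step is leaner than the paper's, since $\BE[Y_{1,n}^2]\le\Delta_n^{(1)}\eta_n$ plus Chebyshev is all that is needed. There is a small slip there: $\sqrt{n}\,\Delta_n^{(1)}\sqrt{\Delta_n^{(2)}}/(n\delta_n)=(n\Delta_n^{(2)})^{-1/2}$, not $(n\Delta_n^{(2)}/\Delta_n^{(1)})^{-1/2}$. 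It still tends to $0$, because $n\Delta_n^{(2)}\ge n\delta_n$ eventually.

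The genuine gap is the moment control of $X_{1,n}$. Both $\Var(X_{1,n})\sim\delta_n\mu_{jk}(t,u)\partial_2p_j^\texttt{c}(t,u)$ and the Lyapunov condition rest on it.

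\emph{The H\"older route fails.} Under $\BE[N_{jk}(T)^4]<\infty$, putting $N_{jk}(T)^3$ in $L^q$ forces $q\le 4/3$, so $X_{1,n}$ must be taken in $L^p$ with $p\ge4$. You then get only $\BE[X_{1,n}^4]\le\BE[X_{1,n}^4]^{1/4}\BE[N_{jk}(T)^4]^{3/4}$, i.e.\ $\BE[X_{1,n}^4]=O(1)$. Likewise, the variance claim needs $\BE[X_{1,n}(X_{1,n}-1)]=o(\delta_n)$, which does not follow from $\BP(X_{1,n}\ge2)=o(\delta_n)$ alone.

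\emph{How the paper closes it.} It writes $X_{1,n}$ as $\int H\,\dd M_{jk}$ plus its compensator.
The martingale part has second moment exactly $\mu_n$. Lemma~\ref{lem:generalfourthmoment} (It\^o's formula for the fourth power) gives it fourth moment $O(\delta_n)$: the integrand is at most $(8\tilde M(s)^2+3)H(s)\lambda(s)$, with $\tilde M$ the running integral over the window and $\BE[\tilde M(s)^2]\le\mu_n$.
The compensator part is nonnegative, bounded by $C\Delta_n^{(1)}$, and has mean $O(\delta_n)$, so it is negligible.

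\emph{A much shorter fix, along the lines of your fallback.} Suppose $s_1<s_2$ are two qualifying $j\to k$ jumps in the box. The process must re-enter $j$ during $(s_1,s_2)$, so $U_{s_2-}<s_2-s_1<\Delta_n^{(1)}$. This contradicts $U_{s_2-}\ge u_{m_2-1}$ as soon as $\Delta_n^{(1)}\le u_{m_2-1}$, which holds eventually since $u_{m_2-1}\to u>0$.
Hence $X_{1,n}\in\{0,1\}$ for large $n$. Then $\Var(X_{1,n})=\mu_n(1-\mu_n)$ exactly and $|X_{1,n}-\mu_n|\le1$. Lindeberg's condition holds trivially because $n\Var(X_{1,n})\to\infty$, with no fourth-moment assumption needed.
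With this observation inserted, your argument is complete.
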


We again introduce the notation
\begin{align*}
    X_{i, n} &:= \int_0^{R^i} 1_{(t_{m_1 - 1} \leq s < t_{m_1})}1_{(u_{m_2 - 1} \leq U_{s-}^i < u_{m_2})}\dd N_{jk}^i(s), \\
    Y_{i, n} &:= \int_0^{R^i}1_{(t_{m_1 - 1} \leq s < t_{m_1})}1_{(u_{m_2 - 1} \leq U_{s-}^i < u_{m_2})} 1_{(Z_{s-}^i = j)}\dd s,
\end{align*}
so that
\begin{equation*}
    O_{jk}(m_1, m_2) = \sum_{i = 1}^n X_{i, n}, \quad E_j(m_1, m_2) = \sum_{i = 1}^n Y_{i, n}.
\end{equation*}
Define also $\mu_n^{jk} := \BE[X_{1, n}]$ and $\eta_n^j = \BE[Y_{1, n}]$. The following lemma yields results akin to those in Lemma \ref{lem:meanvarianceMarkov} for the Markov setup.

\begin{lem}\label{lem:meanvariancesemiMarkov}
Under the assumptions in Theorem \ref{thm:PoissonSemiMarkov}, the following holds for the semi-Markov setup.
\begin{enumerate}
    \item $\mu_n^{jk} = \Delta_n^{(1)} \Delta_n^{(2)} \mu_{jk}(t, u) \partial_2p_j^\texttt{c}(t, u) + O\Big(\Big(\Delta_n^{(1)}\big)^2 \Delta_n^{(2)}\Big) + O\Big(\Delta_n^{(1)}\big(\Delta_n^{(2)}\big)^2\Big)$.
    \item $\eta_n^j = \Delta_n^{(1)}\Delta_n^{(2)}\partial_2p_j^\texttt{c}(t, u) + O\Big(\big(\Delta_n^{(1)}\big)^2 \Delta_n^{(2)}\Big) + O\Big(\Delta_n^{(1)}\big(\Delta_n^{(2)}\big)^2\Big)$.
    \item $\operatorname{Var}(X_{1, n}) = \Delta_n^{(1)} \Delta_n^{(2)} \mu_{jk}(t, u) \partial_2p_j^\texttt{c}(t, u) + O\Big(\big(\Delta_n^{(1)}\big)^2 \Delta_n^{(2)}\Big) + O\Big(\Delta_n^{(1)}\big(\Delta_n^{(2)}\big)^2\Big)$.
    \item $\operatorname{Var}(Y_{1, n}) = \big(\Delta_n^{(1)}\big)^2 \Delta_n^{(2)} \partial_2 p_j^\texttt{c}(t, u) + O\Big(\big(\Delta_n^{(1)}\big)^2 \big(\Delta_n^{(2)}\big)^2\Big) + O\Big(\big(\Delta_n^{(1)}\big)^3 \Delta_n^{(2)}\Big)$.
\end{enumerate}
\end{lem}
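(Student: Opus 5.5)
We prove the four claims of Lemma~\ref{lem:meanvariancesemiMarkov} along the lines of Lemma~\ref{lem:meanvarianceMarkov}, with a two-dimensional Taylor expansion over the box $I_n^{(1)}\times I_n^{(2)}$ replacing the one-dimensional one. Throughout write $\Delta_1=\Delta_n^{(1)}$ and $\Delta_2=\Delta_n^{(2)}$.

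\textbf{Means.} For claim 2, apply Fubini and note that $U_{s-}=U_s$ for Lebesgue-a.e.\ $s$. This gives
\begin{equation*}
\eta_n^j=\int_{I_n^{(1)}}\BP\big(Z_s=j,\,U_s\in I_n^{(2)},\,s\le R\big)\,\dd s=\int_{I_n^{(1)}}\big(p_j^\texttt{c}(s,u_{m_2})-p_j^\texttt{c}(s,u_{m_2-1})\big)\,\dd s .
\end{equation*}
Since $p_j^\texttt{c}$ is $C^2$ near $(t,u)$, the inner difference equals $\Delta_2\,\partial_2p_j^\texttt{c}(s,\xi_s)$ for some intermediate $\xi_s$. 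Expanding $\partial_2 p_j^\texttt{c}$ around $(t,u)$ gives $\Delta_2(\partial_2p_j^\texttt{c}(t,u)+O(\Delta_1+\Delta_2))$, and integrating over $I_n^{(1)}$ yields the claim.

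For claim 1, use the $\CG$-compensator of the censored counting process, exactly as in the Markov case:
\begin{equation*}
\mu_n^{jk}=\BE\int_{I_n^{(1)}}1_{(s\le R)}1_{(Z_{s-}=j)}1_{(U_{s-}\in I_n^{(2)})}\mu_{jk}(s,U_{s-})\,\dd s=\int_{I_n^{(1)}}\int_{I_n^{(2)}}\mu_{jk}(s,v)\,\partial_2p_j^\texttt{c}(s,\dd v)\,\dd s .
\end{equation*}
Here $\partial_2p_j^\texttt{c}(s,\dd v)$ is the sub-distribution of $U_s$ on $\{Z_s=j,\ s\le R\}$. Near $(t,u)$ it has density $\partial_2p_j^\texttt{c}(s,v)$, which is $C^1$. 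Replacing $\mu_{jk}(s,v)\,\partial_2p_j^\texttt{c}(s,v)$ by its value at $(t,u)$ costs $O(\Delta_1+\Delta_2)$ uniformly on the box, hence $O(\Delta_1\Delta_2(\Delta_1+\Delta_2))$ after integration.

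\textbf{Variance of the occurrences.} For claim 3, first note that $\Var(X_{1,n})=\BE[X_{1,n}^2]-(\mu_n^{jk})^2$, and $(\mu_n^{jk})^2=O(\Delta_1^2\Delta_2^2)$, which is absorbed in the error terms. Next decompose $X_{1,n}^2=X_{1,n}+X_{1,n}(X_{1,n}-1)$. The first part contributes $\mu_n^{jk}$. The second part counts ordered pairs of distinct $j\to k$ jumps in the window, and we must show it is $O(\Delta_1^2\Delta_2)$.

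\begin{itemize}
\item Two $j\to k$ jumps, both occurring with duration in $I_n^{(2)}$, require a sojourn in $j$ after the first jump. Hence the second jump happens at a time $s_2$ with $s_2-s_1$ at least the duration already accumulated, so the pair must occur in a short time window.
\item A cleaner route is to write $\BE[X(X-1)]=2\,\BE\int\!\!\int_{s_1<s_2}\dd N(s_1)\,\dd N(s_2)$ and condition at $s_1$ using the compensator. Given the first jump, the inner expectation is bounded by $\sup\mu\cdot\Delta_1\cdot\BP(\text{return to } j \text{ and jump within the window}\mid\CG_{s_1})$.
\item That conditional probability is $O(\Delta_1)$ by the finite second moment of $N$. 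Combined with $\BE$ of the outer integral, which is $O(\Delta_1\Delta_2)$ by claim 1, this gives $O(\Delta_1^2\Delta_2)$.
\end{itemize}

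This second-factorial-moment bound is the main obstacle. The hard part is controlling the post-jump dynamics uniformly, for which we need $\sup_{s\le t+1}\BE[N(s+h)-N(s)\mid\CG_s]=O(h)$. If that cannot be obtained directly, we would instead bound $X(X-1)$ by $N_{jk}(I_n^{(1)})\big(N_{jk}(I_n^{(1)})-1\big)$ together with the indicator of a duration hit. Cauchy--Schwarz then uses the fourth-moment assumption $\BE[N_{jk}(t)^4]<\infty$, at the cost of a slightly weaker error term that is still $o(\Delta_1\Delta_2)$.

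\textbf{Variance of the exposure.} For claim 4, write
\begin{equation*}
\BE[Y_{1,n}^2]=2\int\!\!\int_{s_1<s_2\in I_n^{(1)}}\BP\big(Z_{s_i}=j,\ U_{s_i}\in I_n^{(2)},\ s_2\le R,\ i=1,2\big)\,\dd s_1\,\dd s_2 .
\end{equation*}
Bound the joint probability by $\BP(\text{event at } s_1)$ and a no-jump probability on $[s_1,s_2]$. On the no-jump event the duration simply increases linearly from $s_1$ to $s_2$, so the joint probability equals
\begin{equation*}
\BP\big(Z_{s_1}=j,\ U_{s_1}\in[u_{m_2-1},\,u_{m_2}-(s_2-s_1)),\ s_1\le R\big)\,(1+O(\Delta_1)),
\end{equation*}
where the window length is $\Delta_2-(s_2-s_1)$ if positive and zero otherwise. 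The no-jump probability is $1-O(\Delta_1)$ since intensities are bounded near $(t,u)$, and paths with at least one jump contribute $O(\Delta_1)$ times the probability of the event at $s_1$. Inserting the density $\partial_2p_j^\texttt{c}$ and integrating gives $\Delta_1^2\Delta_2\,\partial_2p_j^\texttt{c}(t,u)$ plus errors of order $\Delta_1^2\Delta_2^2$ and $\Delta_1^3\Delta_2$. Finally subtract $(\eta_n^j)^2=O(\Delta_1^2\Delta_2^2)$.
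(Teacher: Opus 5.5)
Items 1 and 2 are fine and match the paper. The genuine problem is the last step of item 4. Your reduction there is correct: on the no-jump event $U_{s_2}=U_{s_1}+(s_2-s_1)$, so, up to terms that integrate to $O(\Delta_1^3\Delta_2)+O(\Delta_1^2\Delta_2^2)$, the joint probability is $\partial_2p_j^\texttt{c}(t,u)\,(\Delta_2-(s_2-s_1))_+$. Integrating this, however, does not give $\Delta_1^2\Delta_2\,\partial_2p_j^\texttt{c}(t,u)$ with the stated errors. With $h=s_2-s_1$,
\begin{equation*}
2\int_0^{\Delta_1\wedge\Delta_2}(\Delta_1-h)(\Delta_2-h)\,\dd h=
\begin{cases}
\Delta_1^2\Delta_2-\Delta_1^3/3, & \Delta_1\le\Delta_2,\\
\Delta_1\Delta_2^2-\Delta_2^3/3, & \Delta_1\ge\Delta_2.
\end{cases}
\end{equation*}
The correction $\Delta_1^3/3$ is not $O(\Delta_1^2\Delta_2^2)+O(\Delta_1^3\Delta_2)$ unless $\Delta_1=O(\Delta_2^2)$. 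For $\Delta_1\gg\Delta_2$ the order is $\Delta_1\Delta_2^2$ rather than $\Delta_1^2\Delta_2$. For example, $\Delta_1=\Delta_2=n^{-1/3}$ satisfies (i)--(iii) of Theorem~\ref{thm:PoissonSemiMarkov}, and your computation then yields $\operatorname{Var}(Y_{1,n})=\tfrac23\Delta_1^3\,\partial_2p_j^\texttt{c}(t,u)+O(\Delta_1^4)$. So item 4 cannot be established as stated. What your argument does give is $\operatorname{Var}(Y_{1,n})=O(\Delta_1^2\Delta_2)$, which suffices for the $O_\BP$ bounds on $E_j(m_1,m_2)-n\eta_n^j$ used in the theorem. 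The paper reaches the stated formula by writing the probability that both $U_{s_1-}$ and $U_{s_2-}$ lie in $I_n^{(2)}$ as $p_j^\texttt{c}(s_1,s_2,u_{m_2})-p_j^\texttt{c}(s_1,s_2,u_{m_2-1})$. That difference is actually the probability that $\max(U_{s_1-},U_{s_2-})\in I_n^{(2)}$. The omitted inclusion--exclusion terms are precisely your boundary effect, so that route only gives an upper bound.

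Item 3 is also not closed. You claim the conditional probability of returning to $j$ and jumping again is $O(\Delta_1)$ ``by the finite second moment of $N$'', but this does not follow. Moment bounds are unconditional, and nothing is assumed about intensities other than $\mu_{jk}$. Your Cauchy--Schwarz fallback, by your own account, gives an error weaker than the stated one. Your first bullet already contains the fix if you push it one step further. A second counted $j\to k$ jump at $s_2>s_1$ requires re-entering $j$ after $s_1$, so $U_{s_2-}<s_2-s_1<\Delta_1$, while it must also satisfy $U_{s_2-}\ge u_{m_2-1}\to u>0$. Hence $X_{1,n}\in\{0,1\}$ for large $n$, so $X_{1,n}(X_{1,n}-1)=0$ and $\operatorname{Var}(X_{1,n})=\mu_n^{jk}-(\mu_n^{jk})^2$, which gives item 3 immediately from item 1. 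This is simpler than the paper's argument. The paper splits $X_{1,n}$ into a martingale integral with second moment $\mu_n^{jk}$ plus its compensator, and bounds the compensator-squared and cross terms crudely by $O(\Delta_1^2\Delta_2)$. That route only needs an upper bound on $\BE[Y_{1,n}^2]$, so it is unaffected by the problem in item 4.
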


\begin{lem}[Lyapunov condition (semi-Markov)]\label{lem:LyapunovSemiMarkov}
In the context of the semi-Markov model, if $\BE[N_{jk}(t)^4] < \infty$, $\Delta_n^{(1)}, \Delta_n^{(2)} \to 0$ and $n\Delta_n^{(1)}\Delta_n^{(2)} \to \infty$, the Lyapunov condition of order 4 holds for the triangular array $W_{i, n}$ given by
\begin{equation*}
    W_{i, n} = \frac{X_{i, n} - \mu_n^{jk}}{\sqrt{\operatorname{Var}(\sum_{i = 1}^n X_{i, n})}}.
\end{equation*}
\end{lem}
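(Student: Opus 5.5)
To verify the Lyapunov condition of order 4 for $W_{i,n}$, we must show
\begin{equation*}
    \sum_{i=1}^n \BE[W_{i,n}^4] = \frac{n\,\BE[(X_{1,n}-\mu_n^{jk})^4]}{\big(n\Var(X_{1,n})\big)^2} \to 0 .
\end{equation*}
The denominator is handled by Lemma~\ref{lem:meanvariancesemiMarkov}(3). There $\Var(X_{1,n}) = \Delta_n^{(1)}\Delta_n^{(2)}\mu_{jk}(t,u)\partial_2p_j^\texttt{c}(t,u)(1+o(1))$, since the error terms are $o(\Delta_n^{(1)}\Delta_n^{(2)})$ by (i). Because $\mu_{jk}(t,u)\partial_2p_j^\texttt{c}(t,u)>0$, the denominator is of exact order $n^2(\Delta_n^{(1)}\Delta_n^{(2)})^2$. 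It therefore suffices to show that $\BE[(X_{1,n}-\mu_n^{jk})^4] = O(\Delta_n^{(1)}\Delta_n^{(2)})$. The ratio is then $O\big(1/(n\Delta_n^{(1)}\Delta_n^{(2)})\big)\to 0$ by (ii).

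For the fourth central moment I would use $(a-b)^4\le 8(a^4+b^4)$ to reduce to $\BE[X_{1,n}^4]$ and $(\mu_n^{jk})^4$. By Lemma~\ref{lem:meanvariancesemiMarkov}(1), $(\mu_n^{jk})^4 = O((\Delta_n^{(1)}\Delta_n^{(2)})^4)$, which is negligible. The key step is to bound $\BE[X_{1,n}^4]$ by a constant times $\BE[X_{1,n}]$, or at least by $O(\Delta_n^{(1)}\Delta_n^{(2)})$. Since $X_{1,n}$ is a nonnegative integer counting jumps in the time window $I_n^{(1)}$, we have
\begin{equation*}
    X_{1,n}^4 \le X_{1,n}\, N_{jk}(t_{m_1})^3 \le X_{1,n}\, N_{jk}(T)^3
\end{equation*}
for some fixed $T$ beyond the (eventually bounded) window. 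This alone does not give $O(\Delta\Delta)$ without independence, so I would instead split on $\{X_{1,n}\ge 1\}$:
\begin{equation*}
    \BE[X_{1,n}^4] \le \BE\big[N_{jk}(T)^4 \1_{(X_{1,n}\ge 1)}\big].
\end{equation*}
Then I would use a Cauchy--Schwarz/H\"older-type argument, or better, a direct counting-process argument. Write $X_{1,n}^4$ as a sum over ordered $4$-tuples of jumps in the box. The diagonal contribution equals $\BE[X_{1,n}] = \mu_n^{jk} = O(\Delta_n^{(1)}\Delta_n^{(2)})$. The off-diagonal terms involve at least two distinct jumps from $j$ to $k$ with time in $I_n^{(1)}$ and pre-jump duration in $I_n^{(2)}$. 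The second such jump must occur after the process returns to $j$, and its duration is then less than the time elapsed since the first jump, which is at most $\Delta_n^{(1)}$. So multiple jumps force extra jumps within a short window, and I would bound these contributions via compensators by $O((\Delta_n^{(1)})^2\Delta_n^{(2)})$ times moments of $N$, using $\BE[N_{jk}(T)^4]<\infty$ for integrability.

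The main obstacle is exactly this step: rigorously controlling the higher factorial moments $\BE[X_{1,n}(X_{1,n}-1)\cdots]$ so that they are $O(\Delta_n^{(1)}\Delta_n^{(2)})$. If the compensator approach becomes messy, I would try first the cruder route mirroring the Markov proof of Lemma~\ref{lem:Lyapunov}. Apply H\"older: $\BE[X_{1,n}^4]\le \BE[N_{jk}(T)^{4}]$ is only $O(1)$, which is insufficient. Instead use $X_{1,n}^4 \le X_{1,n}N_{jk}(T)^3$ together with the predictable-compensator identity
\begin{equation*}
    \BE\Big[\int H_s \,\dd N_{jk}(s)\Big] = \BE\Big[\int H_s \1_{(Z_{s-}=j)}\mu_{jk}(s,U_{s-})\,\dd s\Big].
\end{equation*}
Apply it to the integrand $\1_{\text{box}}(s,U_{s-})$ times the predictable part $N_{jk}(s-)^3$, handling the future increments $N_{jk}(T)-N_{jk}(s)$ by the Markov property of $(Z,U)$. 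Local boundedness of $\mu_{jk}$ near $(t,u)$ then gives $O(\Delta_n^{(1)}\Delta_n^{(2)})$ for the box integral, with the moment factor controlled by the fourth-moment assumption.
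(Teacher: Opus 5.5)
Your reduction is correct: by Lemma~\ref{lem:meanvariancesemiMarkov}(3) and $n\Delta_n^{(1)}\Delta_n^{(2)}\to\infty$, it suffices to prove $\BE[X_{1,n}^4]=O(\Delta_n^{(1)}\Delta_n^{(2)})$. But this is exactly the step you leave open, and the fallback you propose cannot be justified from the stated hypotheses.

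After $X_{1,n}^4\le X_{1,n}N_{jk}(T)^3$ and the compensator identity, the predictable part is $\int_{I_n^{(1)}}\BE\bigl[N_{jk}(s-)^3 1_{(s\le R)}1_{(Z_{s-}=j)}1_{(U_{s-}\in I_n^{(2)})}\mu_{jk}(s,U_{s-})\bigr]\dd s$. For this to be $O(\Delta_n^{(1)}\Delta_n^{(2)})$ you need $\BE[N_{jk}(s)^3 1_{(Z_s=j,\,U_s\in I_n^{(2)},\,s\le R)}]=O(\Delta_n^{(2)})$ uniformly for $s$ near $t$. That is a bounded duration density \emph{weighted} by $N_{jk}(s)^3$. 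Smoothness of $p_j^\texttt{c}$ only controls the unweighted density, and H\"older with $\BE[N_{jk}(T)^4]<\infty$ yields only $O((\Delta_n^{(2)})^{1/4})$. The future increments would likewise need uniform bounds on $\BE[(N_{jk}(T)-N_{jk}(s))^3\mid Z_s=k,\,U_s=0]$, which are not assumed. So the fourth-moment assumption cannot supply the factor $\Delta_n^{(2)}$. Your other plan, bounding the off-diagonal terms by ``$O((\Delta_n^{(1)})^2\Delta_n^{(2)})$ times moments of $N$'', is too vague to check, and it runs into the same problem if the extra jumps are controlled only through moments of $N$.

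The missing idea is already in your proposal; you just need to finish it. Suppose two counted jumps occur at $s_1<s_2$ in $I_n^{(1)}$. Then $U_{s_2-}\le s_2-s_1<\Delta_n^{(1)}$, while being counted also requires $U_{s_2-}\ge u_{m_2-1}>u-\Delta_n^{(2)}$. Here $u>0$, and this is forced by the hypotheses: $p_j^\texttt{c}(t,\cdot)$ vanishes on negative durations, so $C^2$-regularity around $(t,0)$ would give $\partial_2p_j^\texttt{c}(t,0)=0$. Hence once $\Delta_n^{(1)}+\Delta_n^{(2)}\le u$ the two conditions are incompatible. So $X_{1,n}\in\{0,1\}$ almost surely for large $n$, and $X_{1,n}^4=X_{1,n}$. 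Then $\BE[(X_{1,n}-\mu_n^{jk})^4]\le 8(\mu_n^{jk}+(\mu_n^{jk})^4)=O(\Delta_n^{(1)}\Delta_n^{(2)})$ by Lemma~\ref{lem:meanvariancesemiMarkov}(1), and your denominator computation finishes the proof.

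Completed this way, your route differs from the paper's. The paper writes $X_{i,n}-\mu_n^{jk}$ as the martingale integral $\int_{I_n^{(1)}}1_{(s\le R^i)}1_{(U^i_{s-}\in I_n^{(2)})}\dd M^i_{jk}(s)$ plus a centred compensator integral, applies Minkowski, and bounds the martingale part with the It\^o fourth-moment identity of Lemma~\ref{lem:generalfourthmoment}. Incidentally, the $6(I)^2$ term in that identity is most cleanly controlled by the very same observation: on $\{Z_{s-}=j,\,U_{s-}\in I_n^{(2)}\}$, no counted jump can have occurred earlier in the window. Your combinatorial argument is shorter and needs no moment assumption on $N_{jk}$ for the numerator.
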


\begin{proof}[Proof of Theorem \ref{thm:PoissonSemiMarkov}]
From Lemma~\ref{lem:meanvariancesemiMarkov},
\begin{equation*}
    \operatorname{Var}\Big(\sum_{i = 1}^n X_{i, n} \Big) = n\Delta_n^{(1)}\Delta_n^{(2)}\Big(\mu_{jk}(t, u) \partial_2p_j^\texttt{c}(t, u) + O(\Delta_n^{(1)} + \Delta_n^{(2)}) \Big)
\end{equation*}
and from the Lindeberg condition established in Lemma~\ref{lem:LyapunovSemiMarkov}, we conclude that
\begin{equation*}
    \frac{O_{jk}(m_1, m_2) - n\mu_n^{jk}}{\sqrt{n \Delta_n^{(1)}\Delta_n^{(2)}}} \overset{d}{\longrightarrow} N\Big(0, \mu_{jk}(t, u) \partial_2p_j^\texttt{c}(t, u) \Big).
\end{equation*}
We again claim that
\begin{equation*}
    \sum_{i = 1}^n A_{i, n} := \sum_{i = 1}^n \frac{Y_{i, n} - \eta_n^j}{\sqrt{\operatorname{Var}(\sum_{i = 1}^n Y_{i, n})}} = \frac{E_j(m_1, m_2) - n\eta_n^{j}}{\sqrt{\operatorname{Var}(\sum_{i = 1}^n Y_{i, n} )}} \overset{d}{\longrightarrow} N(0, 1).
\end{equation*}
The $A_{i, n}$ are clearly independent with mean zero, and the variance of the sum is 1. We now verify the Lindeberg condition directly. Let $s_n = \operatorname{Var}\Big(\sum_{i = 1}^n Y_{i, n} \Big)$ and let $c > 0$ be fixed. Then
\begin{align*}
    \BE[A_{i, n}^2 1_{(|A_{i, n}| > c)}] &= \frac{\BE\Big[(Y_{i, n} - \eta_n^j)^2 1_{(|Y_{i, n} - \eta_n^j| > c \sqrt{s_n})}\Big]}{s_n} \\
    &\leq \frac{4\big(\Delta_n^{(1)}\big)^2 \BP(|Y_{i, n} - \eta_n^j| > c \sqrt{s_n})}{s_n} \\
    &\leq \frac{4}{c^2} \cdot \frac{\big(\Delta_n^{(1)}\big)^2 \Var(Y_{1, n})}{s_n^2} = O\left(\frac{\big(\Delta_n^{(1)}\big)^4 \Delta_n^{(2)}}{n^2 \big(\Delta_n^{(1)}\big)^4 \big(\Delta_n^{(2)}\big)^2} \right) \\
    &= O\left(\frac{1}{n} \cdot \frac{1}{n\Delta_n^{(2)}}\right),
\end{align*}
whence
\begin{equation*}
    \sum_{i = 1}^n \BE[A_{i, n}^2 1_{(|A_{i, n}| > c)}] = O\left(\frac{1}{n\Delta_n^{(2)}}\right) \to 0
\end{equation*}
as desired. Thus,
\begin{align*}
    E_j(m_1, m_2) &= \sqrt{\operatorname{Var}\Big(\sum_{i = 1}^n Y_{i, n} \Big)} \sum_{i = 1}^n A_{i, n} + n\eta_n^j \\
    &= \sqrt{n\big(\Delta_n^{(1)}\big)^2 \Delta_n^{(2)}\Big(\partial_2 p_j^\texttt{c}(t, u) + O(\Delta_n^{(1)} + \Delta_n^{(2)})\Big)} O_\BP(1) \\
    &+ n\Delta_n^{(1)}\Delta_n^{(2)} \partial_2p_j^\texttt{c}(t, u) + O\Big(n \big(\Delta_n^{(1)}\big)^2 \Delta_n^{(2)}\Big) + O\Big(n \Delta_n^{(1)} \big(\Delta_n^{(2)}\big)^2\Big).
\end{align*}
The first term is of order $\sqrt{n} \Delta_n^{(1)} \sqrt{\Delta_n^{(2)}}$ which is asymptotically smaller than $n\Delta_n^{(1)} \Delta_n^{(2)}$, and so
\begin{equation*}
    E_j(m_1, m_2) = n \Delta_n^{(1)}\Delta_n^{(2)}\Big(\partial_2p_j^\texttt{c}(t, u) + o_\BP(1)\Big). 
\end{equation*}
We make the same decomposition as for the Markov case and obtain
\begin{equation*}
    \hat{\mu}_{jk}^{(m_1, m_2)} -  \mu_{jk}(t, u) = \frac{O_{jk}(m_1, m_2) - n\mu_n^{jk}}{E_j(m_1, m_2)} + \frac{n\mu_n^{jk} - \mu_{jk}(t, u)E_j(m_1, m_2)}{E_j(m_1, m_2)} =: (1) + (2)
\end{equation*}
Term (1) multiplied with $n\Delta_n^{(1)}\Delta_n^{(2)}$ converges to the desired asymptotic distribution by the asymptotic expression for $E_j(m_1, m_2)$ and a Slutsky argument. We again decompose the second term into
\begin{equation*}
    (2) = \frac{n(\mu_n^{jk} - \mu_{jk}(t, u)\eta_n^j)}{E_j(m_1, m_2)} - \frac{\mu_{jk}(t, u)(E_j(m_1, m_2) - n\eta_n^j)}{E_j(m_1, m_2)} =: (i) - (ii).
\end{equation*}
Since $\operatorname{Var}(E_j(m_1, m_2) - n\eta_n^j) = O\Big(n \big(\Delta_n^{(1)}\big)^2 \Delta_n^{(2)}\Big)$, we have $E_j(m_1, m_2) - n\eta_n^j = O_\BP\Big(\sqrt{n} \Delta_n^{(1)} \sqrt{\Delta_n^{(2)}}\Big)$ and so
\begin{equation*}
    \sqrt{n \Delta_n^{(1)}\Delta_n^{(2)}}(ii) = O_\BP\left(\frac{\sqrt{n} \Delta_n^{(1)}\sqrt{\Delta_n^{(2)}}}{\sqrt{n \Delta_n^{(1)}\Delta_n^{(2)}}} \right) = O_\BP\Big(\sqrt{\Delta_n^{(1)}}\Big). 
\end{equation*}
Also,
\begin{align*}
    \sqrt{n\Delta_n^{(1)}\Delta_n^{(2)}}(i) &= O_\BP\left(\frac{n \big(\Delta_n^{(1)}\big)^2 \Delta_n^{(2)} + n \Delta_n^{(1)}\big(\Delta_n^{(2)}\big)^2}{\sqrt{n \Delta_n^{(1)}\Delta_n^{(2)}}} \right) \\
    &= O_\BP\Big(\sqrt{n \big(\Delta_n^{(1)}\big)^3 \Delta_n^{(2)}}\Big) + O_\BP\Big(\sqrt{n \Delta_n^{(1)} \big(\Delta_n^{(2)}\big)^3}\Big).
\end{align*}
By assumption, these two terms both converge to zero in probability, and the proof is complete.
\end{proof}

Just like for the Markov case, we can conclude asymptotic independence for distinct time points. Note that it is not a requirement that the duration intervals are distinct since the boxes will still be disjoint for different time intervals. We again make the dependence on time and duration explicit in our notation. 

\begin{cor}[Asymptotic independence for distinct timepoints (semi-Markov)]\label{cor:asymptoticIndependenceSemiMarkov}
Under the assumptions in Theorem \ref{thm:PoissonSemiMarkov}, it holds for $s, t \geq 0$ with $s \neq t$ and $u < t, v < s$ with $\partial_2p_j^\texttt{c}(t, u) > 0$ and $\partial_2 p_j^\texttt{c}(s, v) > 0$ that
\begin{equation*}
    \sqrt{n\Delta_n^{(1)}\Delta_n^{(2)}}(\hat{\mu}_{jk}^{(m_1(t), m_2(u))} - \mu_{jk}(t, u), \hat{\mu}_{jk}^{(m_1(s), m_2(v))} - \mu_{jk}(s, v)) \overset{d}{\longrightarrow} (G(t, u), G(s, v))
\end{equation*}
with $G(t, u)$ and $G(s, v)$ independent and
\begin{equation*}
    G(t', u') \sim N\Big(0, \frac{\mu_{jk}(t', u')}{\partial_2 p_j^\texttt{c}(t', u')} \Big).
\end{equation*}
\end{cor}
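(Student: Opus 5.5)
The plan is to reduce the statement to a bivariate central limit theorem for the occurrences and then reuse the decomposition from the proof of Theorem~\ref{thm:PoissonSemiMarkov}. Since $s \neq t$ and $\Delta_n^{(1)} \to 0$, the time intervals $I_n^{(1)}(t)$ and $I_n^{(1)}(s)$ are disjoint for all large $n$, so the two boxes are disjoint. From the proof of Theorem~\ref{thm:PoissonSemiMarkov}, applied separately at $(t,u)$ and at $(s,v)$, we already have
\begin{equation*}
E_j(m_1(t'), m_2(u')) = n\Delta_n^{(1)}\Delta_n^{(2)}\big(\partial_2 p_j^\texttt{c}(t',u') + o_\BP(1)\big).
\end{equation*}
We also have that the bias terms $\sqrt{n\Delta_n^{(1)}\Delta_n^{(2)}}\,(2)$ vanish in probability at each point. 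Marginal convergence in probability to zero implies joint convergence of the vector of these remainders to zero. By Slutsky's lemma (applied to vectors) and the continuous mapping theorem, it therefore suffices to show that
\begin{equation*}
\frac{1}{\sqrt{n\Delta_n^{(1)}\Delta_n^{(2)}}}\Big(O_{jk}(m_1(t),m_2(u)) - n\mu_n^{jk}(t,u),\; O_{jk}(m_1(s),m_2(v)) - n\mu_n^{jk}(s,v)\Big)
\end{equation*}
converges to a centred bivariate normal with diagonal covariance $\operatorname{diag}\big(\mu_{jk}(t,u)\partial_2p_j^\texttt{c}(t,u),\, \mu_{jk}(s,v)\partial_2p_j^\texttt{c}(s,v)\big)$. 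Dividing by the exposures then yields the claimed independent limits $G(t,u), G(s,v)$.

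For the bivariate CLT I will use the Cramér--Wold device. Fix $(a,b)\in\BR^2$ and consider the triangular array
\begin{equation*}
V_{i,n} = \frac{a\big(X_{i,n}(t,u)-\mu_n^{jk}(t,u)\big) + b\big(X_{i,n}(s,v)-\mu_n^{jk}(s,v)\big)}{\sqrt{n\Delta_n^{(1)}\Delta_n^{(2)}}},
\end{equation*}
which has independent rows and mean zero. The Lyapunov condition of order 4 follows from Lemma~\ref{lem:LyapunovSemiMarkov} together with the elementary inequality $(x+y)^4 \le 8(x^4+y^4)$. Each marginal's normalised fourth-moment sum tends to zero, and the normalisations are of the same order $n\Delta_n^{(1)}\Delta_n^{(2)}$ by Lemma~\ref{lem:meanvariancesemiMarkov}. (If $a\mu_{jk}(t,u)\partial_2p_j^\texttt{c}(t,u)$ and the analogous term both vanish, i.e.\ $a=b=0$, the claim is trivial.) The variance of the row sum is $a^2$ and $b^2$ times the marginal variances, which converge by Lemma~\ref{lem:meanvariancesemiMarkov}(3), plus the cross term $2ab\,n\Cov(X_{1,n}(t,u),X_{1,n}(s,v))/(n\Delta_n^{(1)}\Delta_n^{(2)})$.

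The main obstacle is showing that this covariance term vanishes, i.e.\ that $\Cov(X_{1,n}(t,u),X_{1,n}(s,v)) = o(\Delta_n^{(1)}\Delta_n^{(2)})$. Write $\Cov = \BE[X(t,u)X(s,v)] - \mu_n^{jk}(t,u)\mu_n^{jk}(s,v)$. The product of means is $O\big((\Delta_n^{(1)}\Delta_n^{(2)})^2\big)$ by Lemma~\ref{lem:meanvariancesemiMarkov}(1), hence negligible. For the mixed moment, assume without loss of generality $t<s$, so that the box at $(t,u)$ precedes the box at $(s,v)$ in time. Since $X(t,u)$ is $\CG_{t_{m_1(t)}}$-measurable, conditioning gives
\begin{equation*}
\BE[X(t,u)X(s,v)] = \BE\big[X(t,u)\,\BE[X(s,v)\mid \CG_{t_{m_1(t)}}]\big].
\end{equation*}
Via the compensator, the inner conditional expectation is bounded by $\sup\mu_{jk}\cdot\Delta_n^{(1)}$ on the relevant neighbourhood, using continuity of $\mu_{jk}$ near $(s,v)$. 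This requires the compensator bound $\int_{I_n^{(1)}(s)} 1_{(U_{r-}\in I_n^{(2)}(v))}1_{(Z_{r-}=j)}\mu_{jk}(r,U_{r-})\,\dd r \le C\Delta_n^{(1)}$. Hence $\BE[X(t,u)X(s,v)] \le C\Delta_n^{(1)}\mu_n^{jk}(t,u) = O\big((\Delta_n^{(1)})^2\Delta_n^{(2)}\big) = o(\Delta_n^{(1)}\Delta_n^{(2)})$. If this crude bound proves problematic, for instance because of the behaviour of $\mu_{jk}$ away from $(s,v)$, I would instead bound $\BE[X(t,u)X(s,v)]$ by Cauchy--Schwarz on the event that at least one jump occurs in each box, using the fourth-moment assumption. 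With the covariance negligible, Lindeberg's CLT gives the univariate limit $N\big(0, a^2\mu_{jk}(t,u)\partial_2p_j^\texttt{c}(t,u) + b^2\mu_{jk}(s,v)\partial_2p_j^\texttt{c}(s,v)\big)$, and Cramér--Wold completes the proof.
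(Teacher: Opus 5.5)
Your proposal is correct and follows essentially the same route as the paper. You use the decomposition from the proof of Theorem~\ref{thm:PoissonSemiMarkov} to reduce to a bivariate CLT for the centred occurrences, apply Cram\'er--Wold with the order-4 Lyapunov bound from Lemma~\ref{lem:LyapunovSemiMarkov}, and kill the cross-covariance by conditioning on $\CG$ at the end of the earlier (disjoint) time interval. Your direct bound $\BE[X(t,u)X(s,v)] \le C\Delta_n^{(1)}\mu_n^{jk}(t,u) = O\big((\Delta_n^{(1)})^2\Delta_n^{(2)}\big)$ is a slightly more compact version of the paper's four-term martingale/compensator expansion, giving the same order, so the Cauchy--Schwarz fallback is unnecessary.
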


\section{Simulation study}
\label{sec:simulation}

The simulation study is included to illustrate the local character of the asymptotic results. We do not regard it as a justification of occurrence-exposure Poisson regression itself, which is classical. The focus is instead on pointwise estimation at a fixed interior time point and on the way the mesh width interacts with the sample size.

\subsection{Single-sample illustration}

We begin with a single simulated sample from a time-inhomogeneous Markov jump process on $\CZ = \{1,2,3\}$, where state~$3$ is absorbing and $Z_0 = 1$. The total jump intensity out of state~$i$ at time~$t$ is
\begin{align*}
\lambda_i(t) &=
\begin{cases}
0.1 + 0.002\, t + 0.05 \sin(t/2), & i = 1, \\
0.06 + 0.002\, t + 0.05 \sin(t/2), & i = 2, \\
0, & i = 3,
\end{cases}
\end{align*}
and, conditional on a jump from state~$i$, the destination state is selected according to fixed transition probabilities. From state~$1$, the process moves to state~$2$ with probability~$0.9$ and to state~$3$ with probability~$0.1$, whereas from state~$2$ all jumps go to state~$3$. Hence
\begin{align*}
\mu_{12}(t) &= 0.09 + 0.0018\, t + 0.045 \sin(t/2),
\end{align*}
which will also be the quantity considered in the later experiments. Independent right-censoring is imposed with $R \sim \mathrm{Unif}(10,40)$, and the transition intensities are estimated by occurrence-exposure Poisson regression on an equidistant grid on $[0,40]$.

Figure~\ref{fig:single_sample} is based on a single sample of size $n = 100{,}000$. It is included only as a reference plot. We observe that the fitted step functions track the true intensities reasonably well, but the figure is not intended to say anything by itself about the asymptotic regime.

\begin{figure}[t]
\centering
\includegraphics[width=\textwidth]{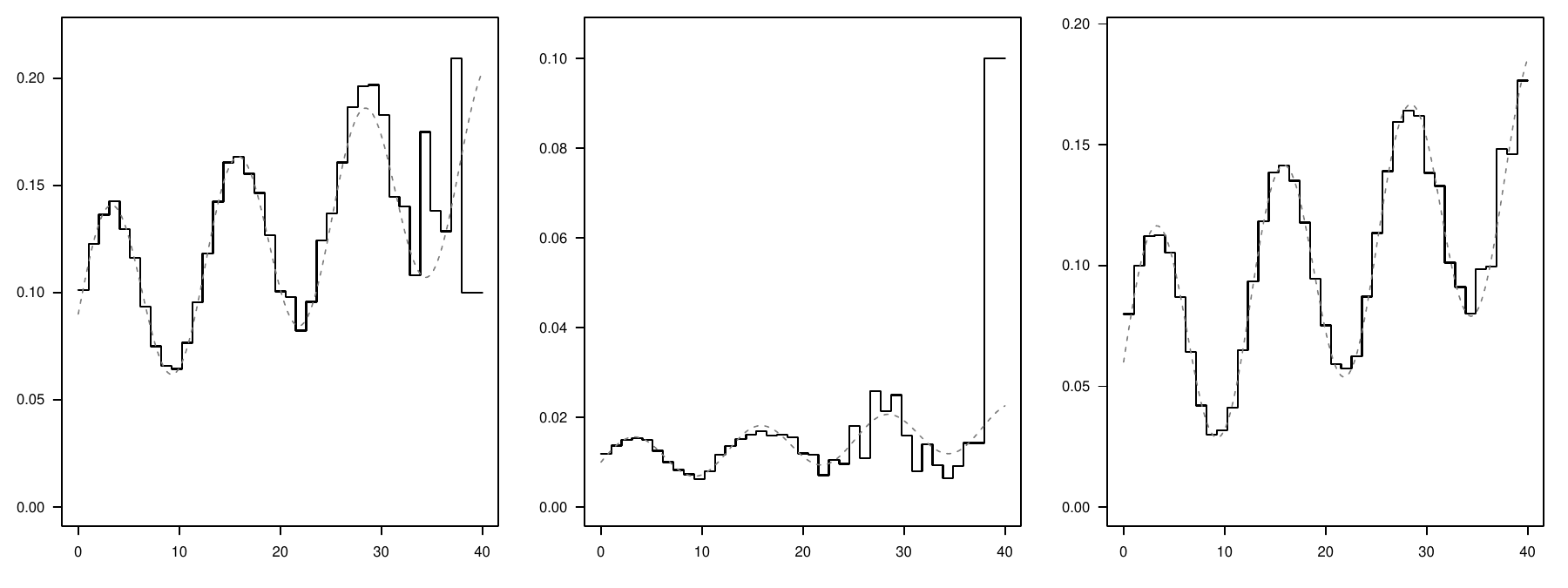}
\caption{Single-sample Markov illustration. From left to right, the panels show $\mu_{12}$, $\mu_{13}$ and $\mu_{23}$ as functions of calendar time. The solid step curves are the occurrence-exposure Poisson estimates, and the grey dashed curves are the true intensities.}
\label{fig:single_sample}
\end{figure}

\subsection{Bias-variance tradeoff}

We next examine the conditions of the pointwise central limit theorem in a setting where the sample size is fixed and the bin width is varied over a range. We estimate $\mu_{12}(t_0)$ at the interior time point $t_0 = 20$ using regular partitions of $[0,40]$ into $M$ bins, with $M$ ranging from $5$ to $80$. Thus $\Delta_n = 40/M$. For each value of~$M$, we simulate $R = 1000$ independent samples of size $n = 500$ under the model above and retain the occurrence-exposure estimate from the bin containing $t_0$.

Figure~\ref{fig:biasvariance} displays two quantities as functions of the bin width: the variance of the normalized estimation error
\begin{align*}
Z_n &= \sqrt{n\Delta_n}\bigl(\hat{\mu}_{12}(t_0) - \mu_{12}(t_0)\bigr),
\end{align*}
and the absolute bias scaled by $\sqrt{n\Delta_n}$. The latter is the bias on the scale relevant for the limit theorem.
\begin{figure}[t]
\centering
\includegraphics[width=0.5\textwidth]{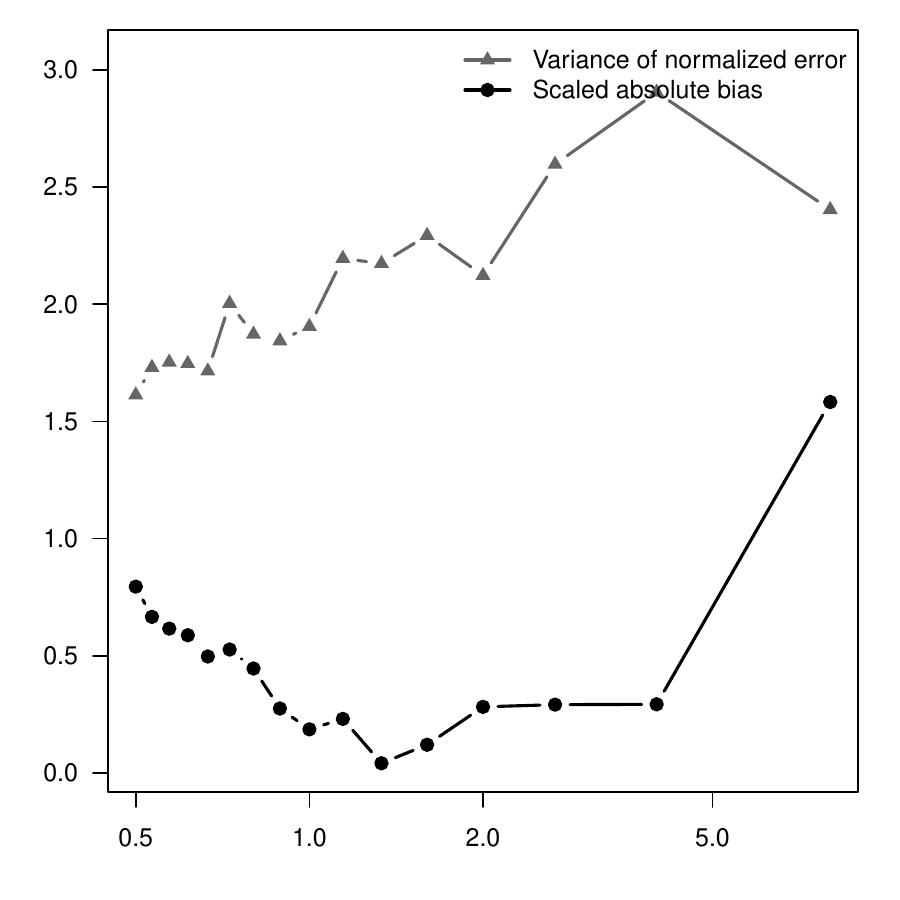}
\caption{Bias--variance tradeoff for pointwise estimation of $\mu_{12}(20)$. The horizontal axis is the bin width $\Delta_n$. The two curves show the variance of $Z_n = \sqrt{n\Delta_n}\bigl(\hat{\mu}_{12}(20)-\mu_{12}(20)\bigr)$ and the scaled absolute bias $\sqrt{n\Delta_n}\, |\BE[\hat{\mu}_{12}(20)]-\mu_{12}(20)|$.}
\label{fig:biasvariance}
\end{figure}
The figure shows the expected tradeoff. For large values of $\Delta_n$, the local approximation is poor and the scaled bias is non-negligible. When $\Delta_n$ becomes very small, the effective sample size within a bin deteriorates and the variance of $Z_n$ increases. The intermediate range is the one relevant for the asymptotic normality result.

\subsection{Asymptotic normality approximation}

To examine the distributional approximation more directly, we consider the empirical distribution of
\begin{align*}
Z_n &= \sqrt{n\Delta_n}\bigl(\hat{\mu}_{12}(t_0) - \mu_{12}(t_0)\bigr)
\end{align*}
under three representative mesh widths.

Figure~\ref{fig:clt} shows kernel density estimates of $Z_n$ based on the same $R = 1000$ replications for $M = 5$, $15$ and $75$. In the coarse regime, the distribution is visibly shifted, reflecting non-negligible discretization bias. In the very fine regime, it is too dispersed, which is consistent with insufficient effective sample size within a bin. In the intermediate regime, the Gaussian approximation is better.

\begin{figure}[t]
\centering
\includegraphics[width=\textwidth]{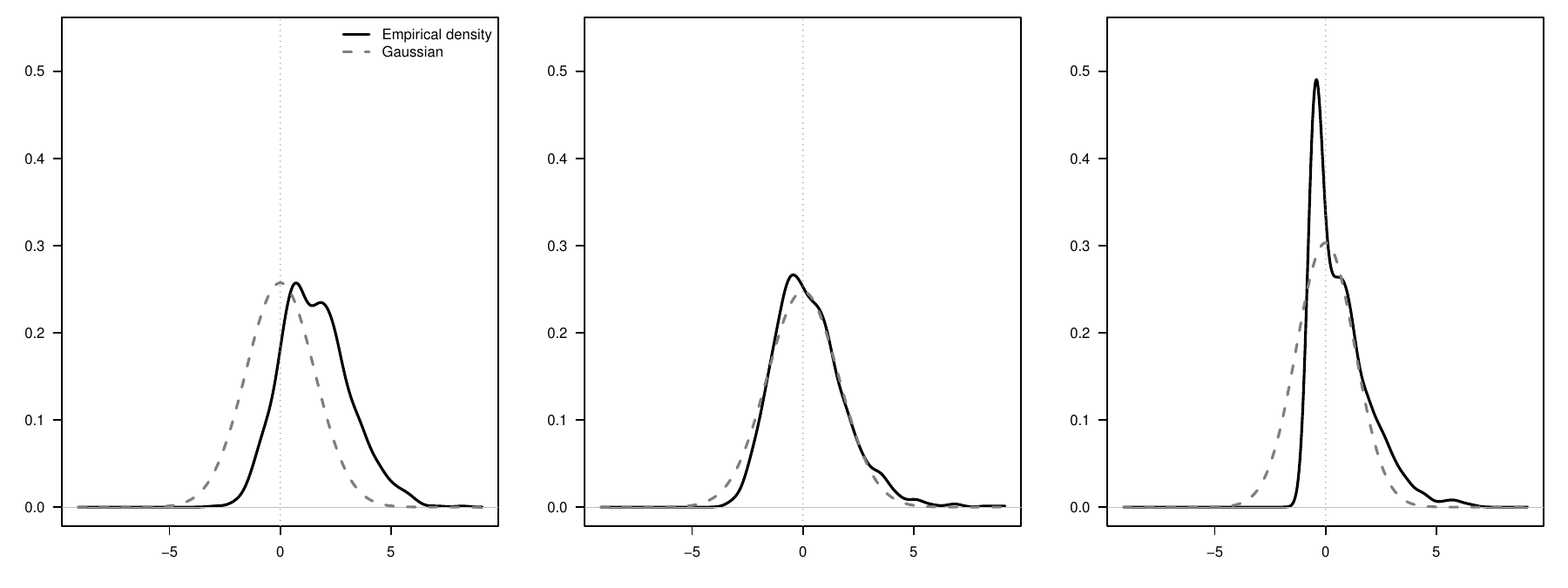}
\caption{Empirical densities of $Z_n = \sqrt{n\Delta_n}\bigl(\hat{\mu}_{12}(20)-\mu_{12}(20)\bigr)$ for three mesh widths. From left to right, the panels correspond to $M = 5$, $15$ and $75$. The dashed curves are centered Gaussian densities with variance matched to the corresponding Monte Carlo sample.}
\label{fig:clt}
\end{figure}
These Markov experiments illustrate that the central limit theorem is genuinely local, but also somewhat sensitive to the bias-variance tradeoff.
\subsection{Semi-Markov illustration}

We finally consider a semi-Markov model on the same state space $\CZ = \{1,2,3\}$, again with $Z_0 = 1$, absorbing state~$3$, and independent censoring $R \sim \mathrm{Unif}(10,40)$. We simulate $n = 100{,}000$ trajectories. The transition intensities are
\begin{align*}
\mu_{12}(t,u) &= 0.09 + 0.0018 t, \\
\mu_{13}(t,u) &= 0.01 + 0.0002 t, \\
\mu_{23}(t,u) &= 0.09 + 0.001\, t(1 + 0.1u) + \frac{0.2}{1 + \exp\{0.5(u-4)\}}.
\end{align*}
Thus the transitions out of state~$1$ depend only on calendar time, whereas the transition from state~$2$ to state~$3$ depends on both calendar time and duration. The semi-Markov occurrence-exposure estimator is fitted on a regular grid with mesh size $2$ in both coordinates, so that the estimate is piecewise constant on boxes of the form $[t_{m_1-1}, t_{m_1}) \times [u_{m_2-1}, u_{m_2})$.

Figure~\ref{fig:poisson_semi_3d_numeric} compares the fitted values with the true intensity surfaces on the admissible region $\{(t,u) : 0 \leq u \leq t\}$. Since three-dimensional displays are difficult to read in detail, Figure~\ref{fig:poisson_semi_slice_numeric} also reports diagonal sections along lines of the form $t-u=d$. For $\mu_{12}$ and $\mu_{13}$, only $d=0$ is relevant, because before the first jump the duration coincides with calendar time. For $\mu_{23}$ we show the sections $d \in \{1,5,10,20\}$ in order to display the duration effect more clearly. The estimated surfaces recover both the overall time trend and the higher transition intensity at short durations.

\begin{figure}[!htbp]
\centering
\includegraphics[width=0.45\textwidth, trim= 0in 0in 0in 0in,clip]{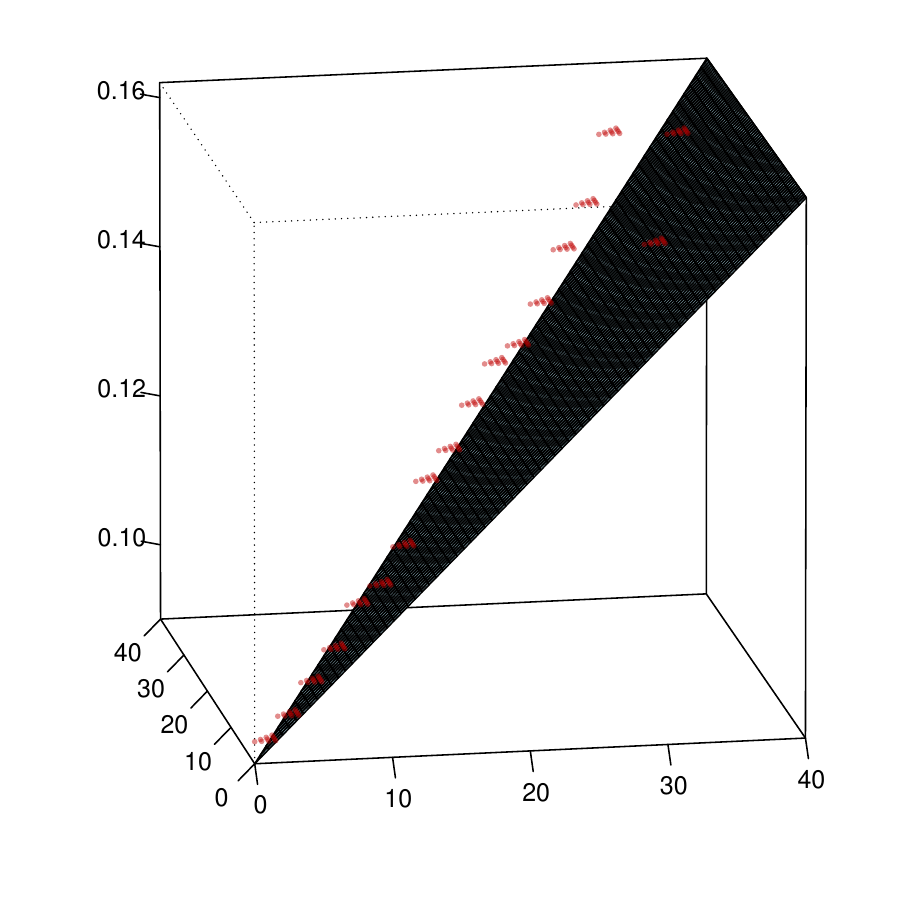}
\includegraphics[width=0.45\textwidth, trim= 0in 0in 0in 0in,clip]{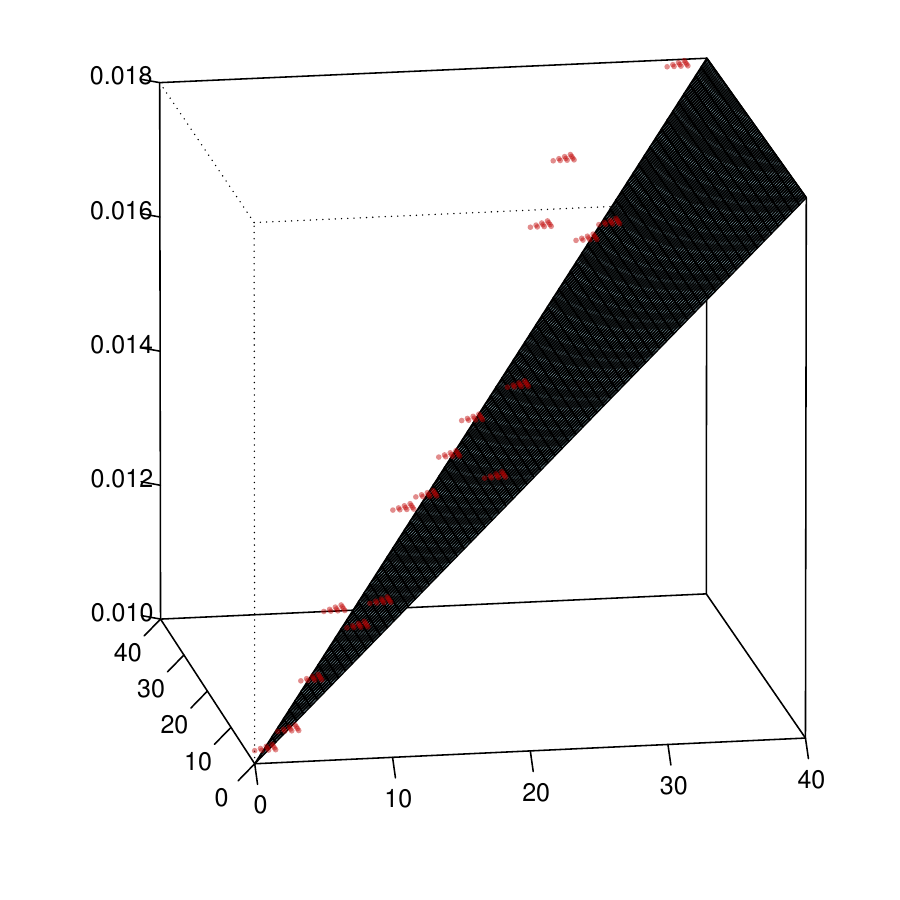}\\[0.05cm]
\includegraphics[width=0.45\textwidth, trim= 0in 0in 0in 0in,clip]{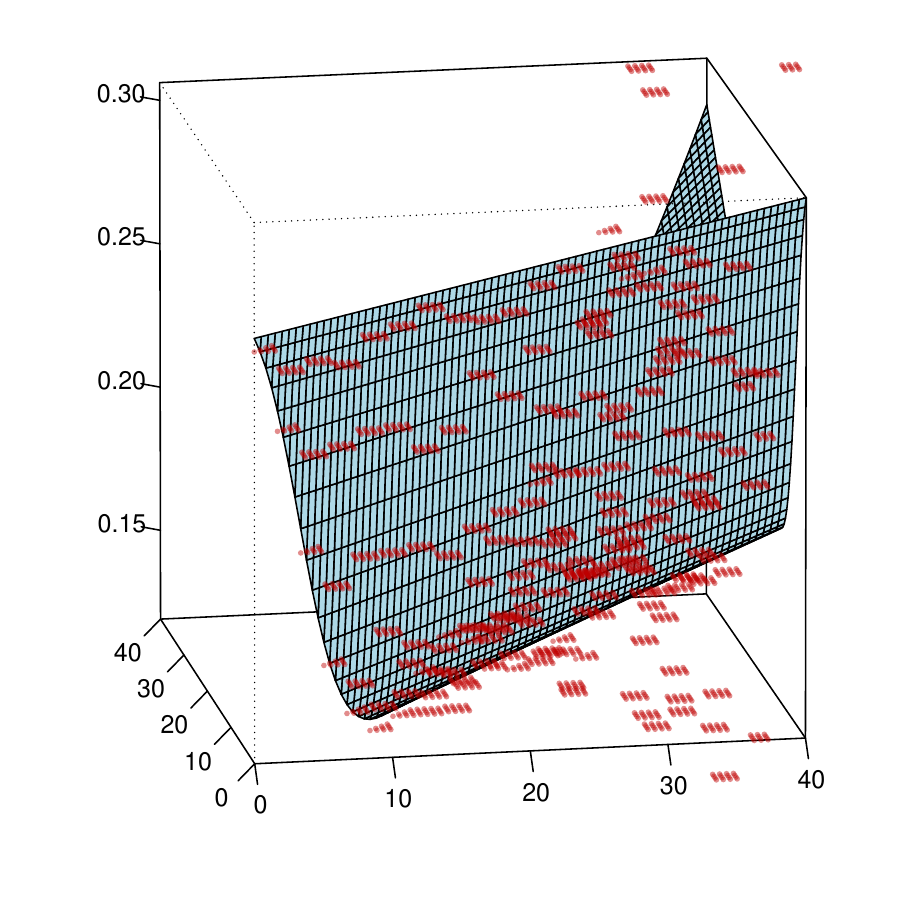}
\includegraphics[width=0.45\textwidth, trim= 0in 0in 0in 0in,clip]{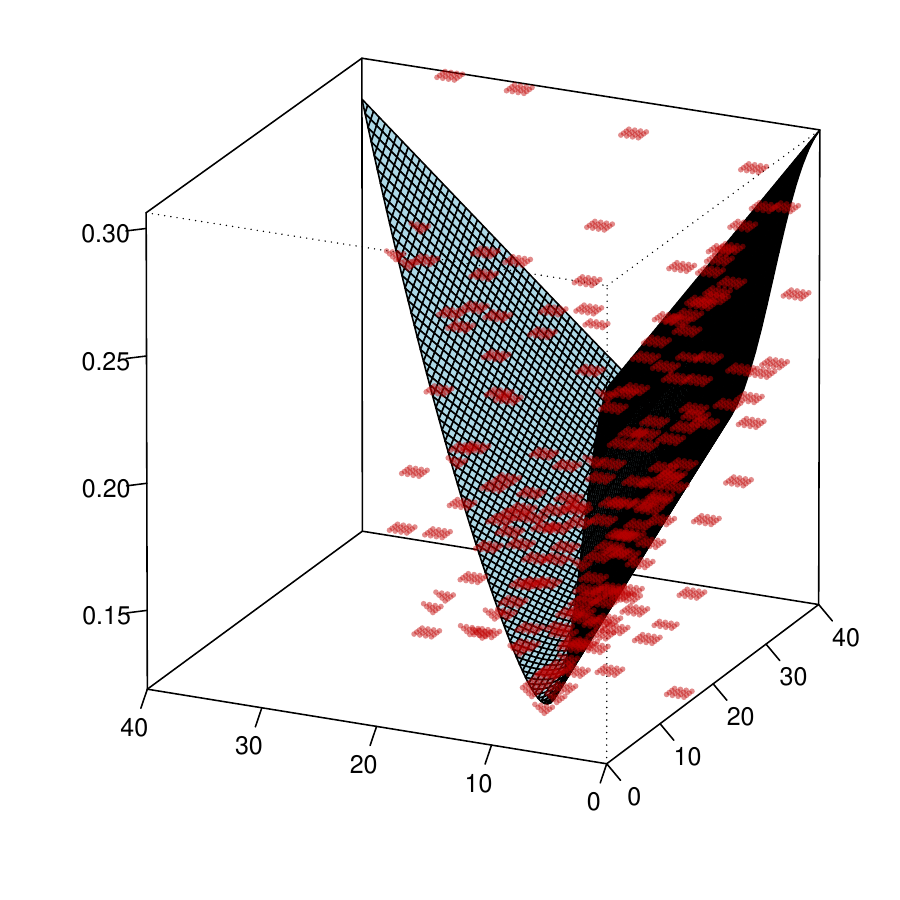}
\caption{Semi-Markov Poisson-regression fits on $(t,u)$. The top row shows $\mu_{12}$ and $\mu_{13}$; the bottom row shows $\mu_{23}$ from two viewing angles. The blue surface is the true intensity and the red points are the fitted occurrence-exposure values on the admissible region $0 \leq u \leq t$.}
\label{fig:poisson_semi_3d_numeric}
\end{figure}

\begin{figure}[!htbp]
\centering
\includegraphics[width=0.4\textwidth, trim= 0.3in 0.6in 0.3in 0.6in,clip]{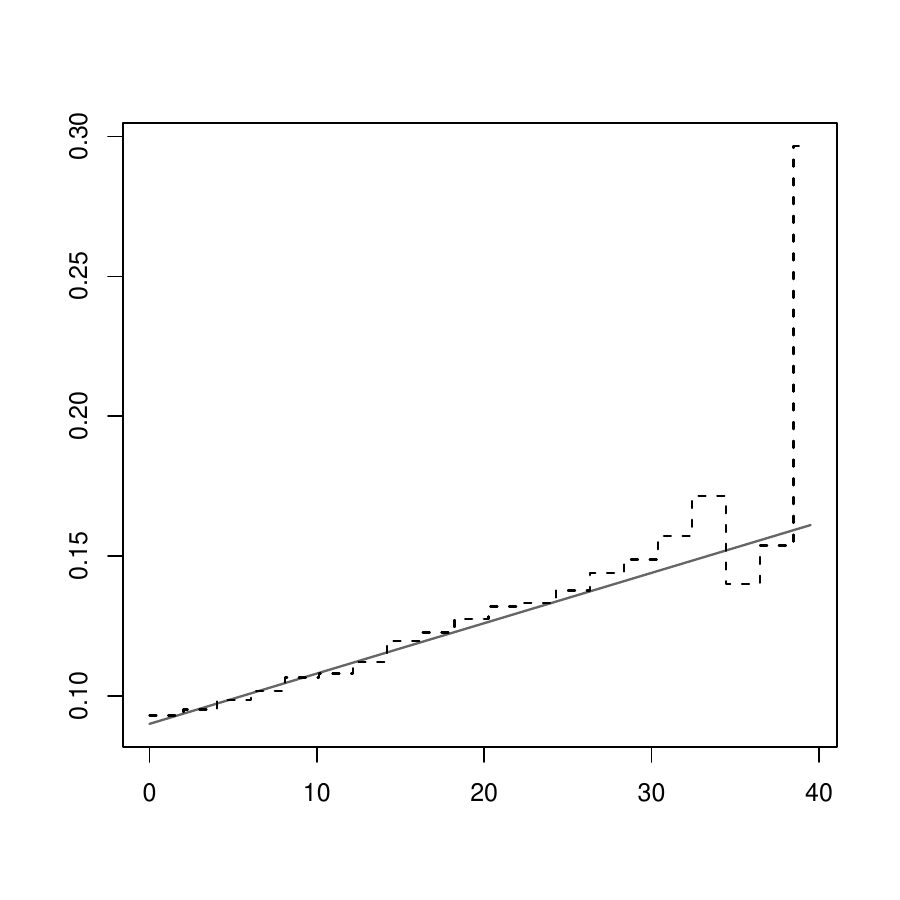}
\includegraphics[width=0.4\textwidth, trim= 0.3in 0.6in 0.3in 0.6in,clip]{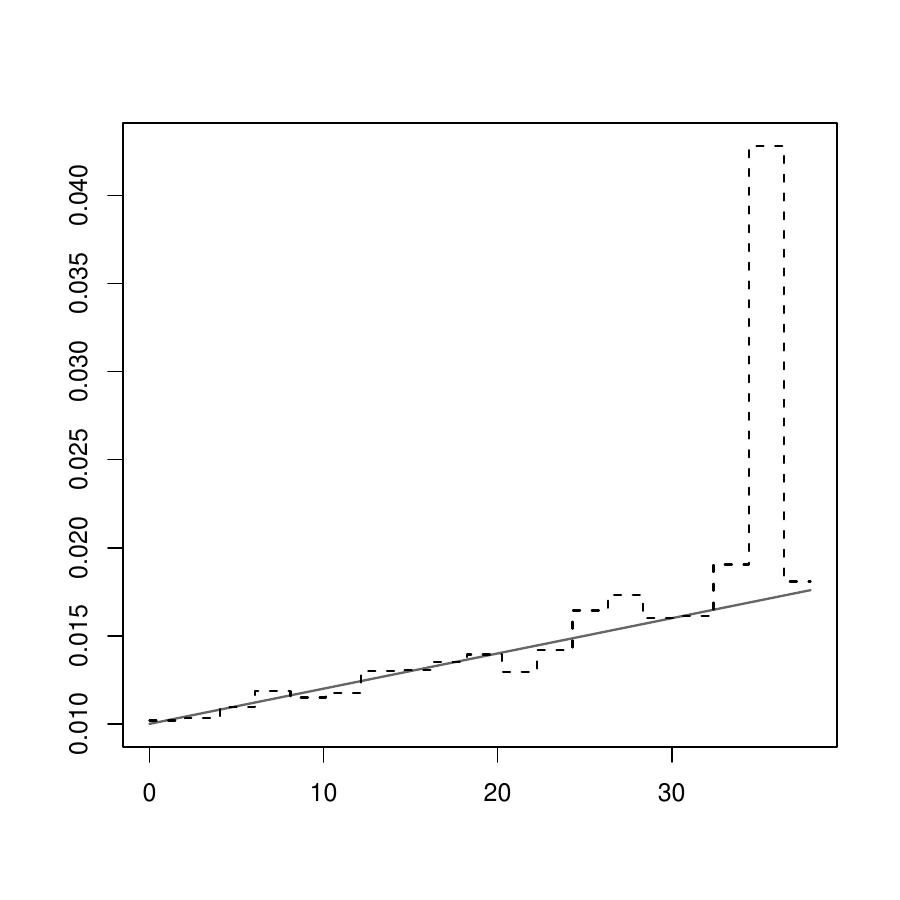}\\[0.05cm]
\includegraphics[width=0.8\textwidth, trim= 0.3in 0.6in 0.3in 0.6in,clip]{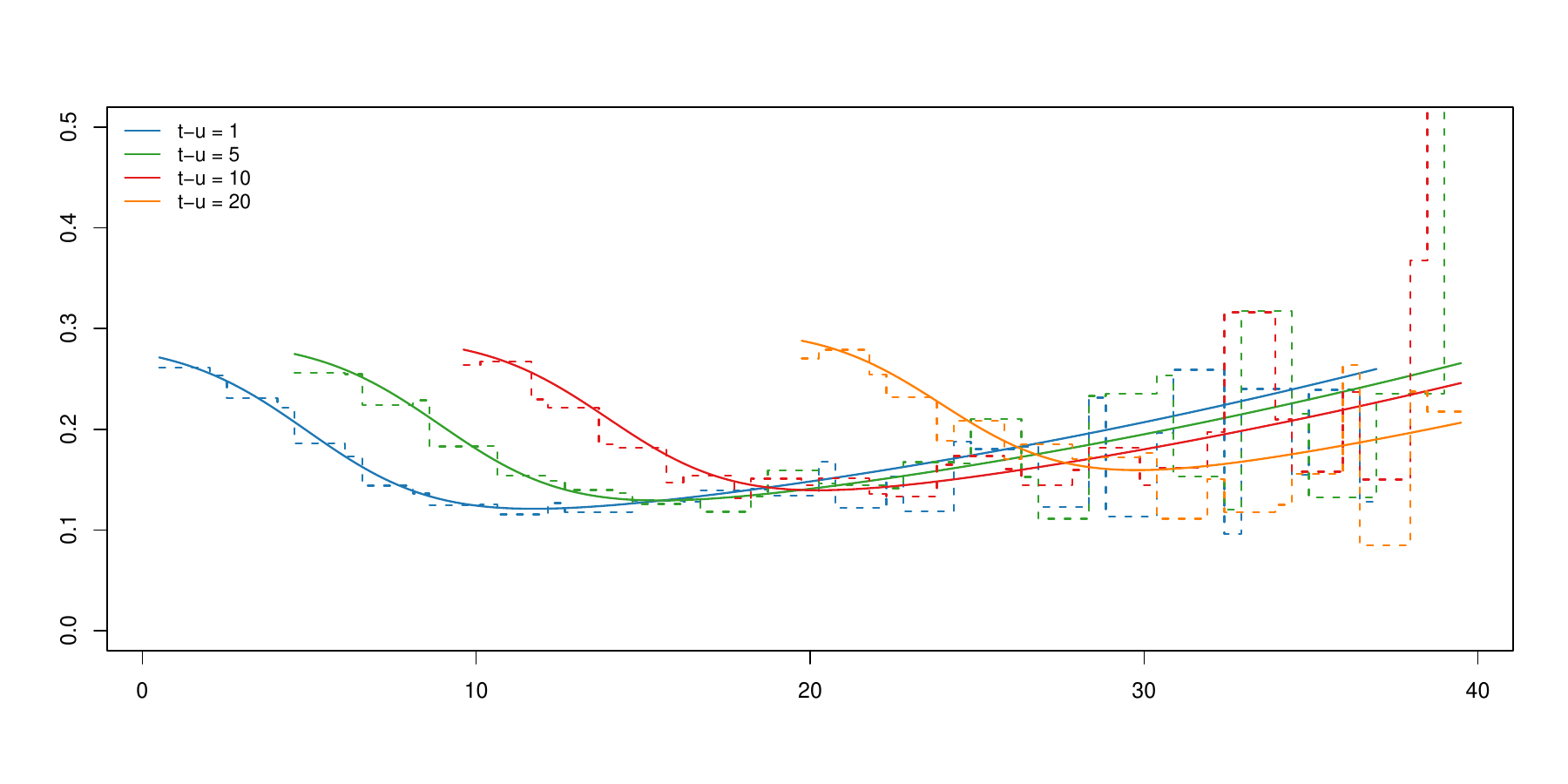}
\caption{Diagonal sections of the semi-Markov intensities along lines $t-u=d$. The top row corresponds to $\mu_{12}$ and $\mu_{13}$ with $d=0$. The lower panel corresponds to $\mu_{23}$ with $d \in \{1,5,10,20\}$. Solid curves denote the true intensities, and dashed step curves denote the fitted occurrence-exposure values.}
\label{fig:poisson_semi_slice_numeric}
\end{figure}

\section{Data application} \label{sec:Application}

\subsection{Data description and multi-state framework}
We illustrate the methods on the \texttt{LEC-DK19} (Loss of Earning Capacity -- Denmark 2019) dataset. The observation window runs from 31/01/2015 to 01/09/2019, the latter date serving as the analysis time $\eta$. Individuals are followed in a three-state model with states active ($1$), disabled ($2$), and reactivated ($3$).

The data contain $1{,}773$ disability events, corresponding to an incidence of $0.43\%$, and $1{,}133$ reactivation events, corresponding to $37.63\%$ of the disabled population. Following~\cite{BuchardtFurrerSandqvist}, we focus on the transition intensities $\mu_{12}$ and $\mu_{23}$. Since the number of deaths from the disabled state is small, the corresponding mortality intensity is set equal to zero in the present analysis.

\subsection{Standard and regularized Poisson estimation}
We discretize time using a regular grid $\mathcal{G} = \{t_0, t_1, \dots, t_M\}$ with bin width $\Delta_n = 0.1$ years. For each bin, the data are summarized by occurrence and exposure, and this yields the same Poisson likelihood structure as in the theoretical development. Duration is not present, so we restrict ourselves to a Markov specification. As a baseline, we use the grid-based occurrence-exposure estimator
\begin{align*}
\hat{\mu}_{jk}^{(m)} &= \frac{O_{jk}(m)}{E_j(m)}.
\end{align*}
At interior time points, Section~\ref{sec:main} yields
\begin{equation}
\sqrt{n\Delta_n}(\hat{\mu}_{jk}(t) - \mu_{jk}(t)) \xrightarrow{d} N\left(0, \frac{\mu_{jk}(t)}{p_j^c(t)}\right),
\end{equation}
where $p_j^c(t)$ is the occupation probability under censoring. This provides the benchmark intervals reported in Figure~\ref{fig:StandardPoisson}.
\begin{figure}[htbp!]
    \centering
    \includegraphics[width=0.4\textwidth]{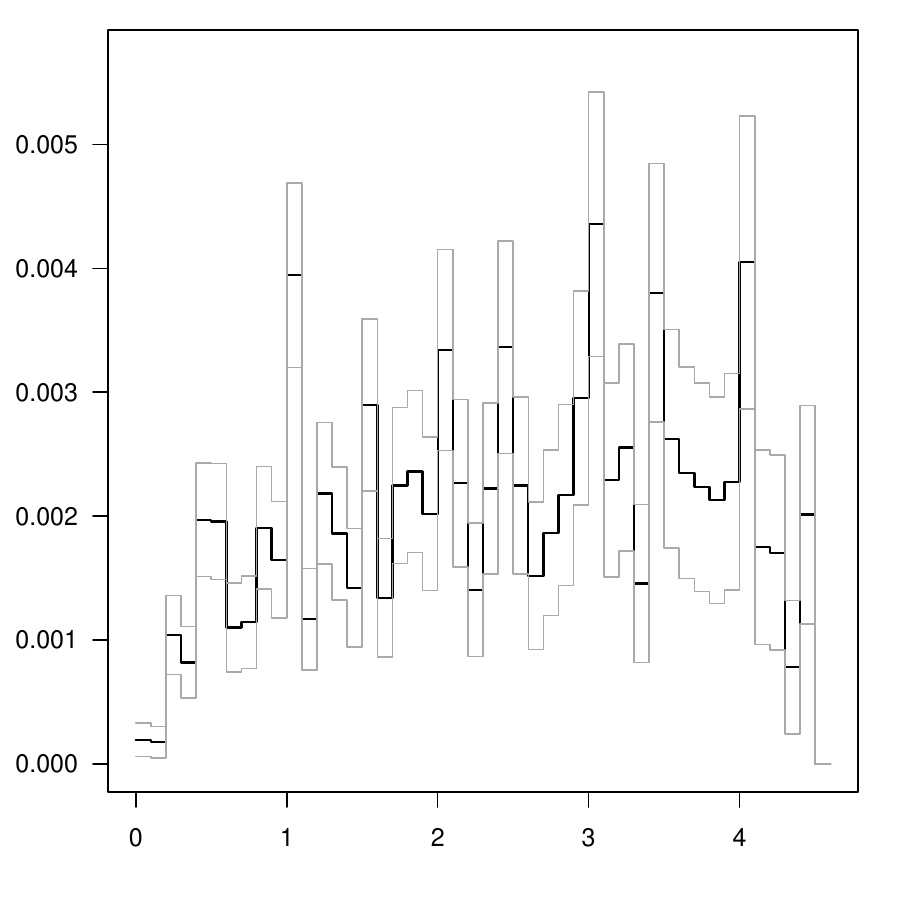}
    \includegraphics[width=0.4\textwidth]{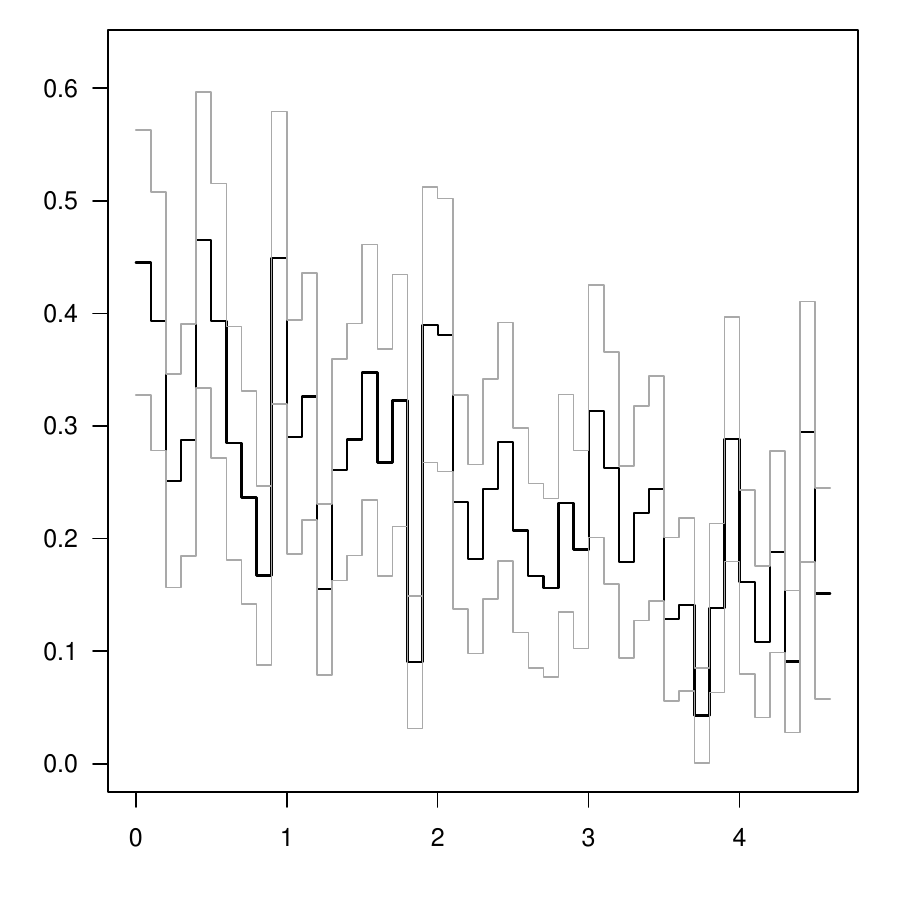}
    \caption{Standard occurrence-exposure estimates on the uniform $0.1$-year grid. The left panel shows $\hat{\mu}_{12}$ and the right panel shows $\hat{\mu}_{23}$, both as functions of time in years. Grey lines indicate pointwise $95\%$ intervals based on the asymptotic normal approximation from Section~\ref{sec:main}.}
    \label{fig:StandardPoisson}
\end{figure}

We now also consider a Poisson regression tree. The method recursively partitions the time axis by reductions in Poisson deviance and therefore replaces the fixed grid by a data-driven one. Each leaf corresponds to a pooled occurrence-exposure estimate on a larger time interval. The intervals shown in Figure~\ref{fig:CartPoisson} are obtained by applying the same variance formula to the pooled occurrence and exposure within the relevant leaf.

Finally, we fit a fused LASSO on the log-intensity parameters $\alpha_m = \log(\mu_m)$, defined by
\begin{equation}
\hat{\boldsymbol{\alpha}} = \underset{\boldsymbol{\alpha}}{\text{argmin}} \sum_{m=1}^M \left( E_m e^{\alpha_m} - O_m \alpha_m \right) + \lambda \sum_{m=2}^M |\alpha_m - \alpha_{m-1}|.
\end{equation}
The penalty encourages neighbouring bins to share a common level and thus produces a piecewise constant estimate with fewer jumps. We report heuristic bands in Figure~\ref{fig:LassoPoisson} by plugging the regularized estimate into the local Poisson variance formula.

Before turning to the plots, we note that the confidence bands should not be interpreted uniformly across the three methods. For the standard occurrence-exposure estimator, the pointwise bands are supported by the asymptotic theory above. For the tree-based and fused LASSO estimators, they should be regarded as heuristic.
\begin{figure}[htbp!]
    \centering
    \includegraphics[width=0.4\textwidth]{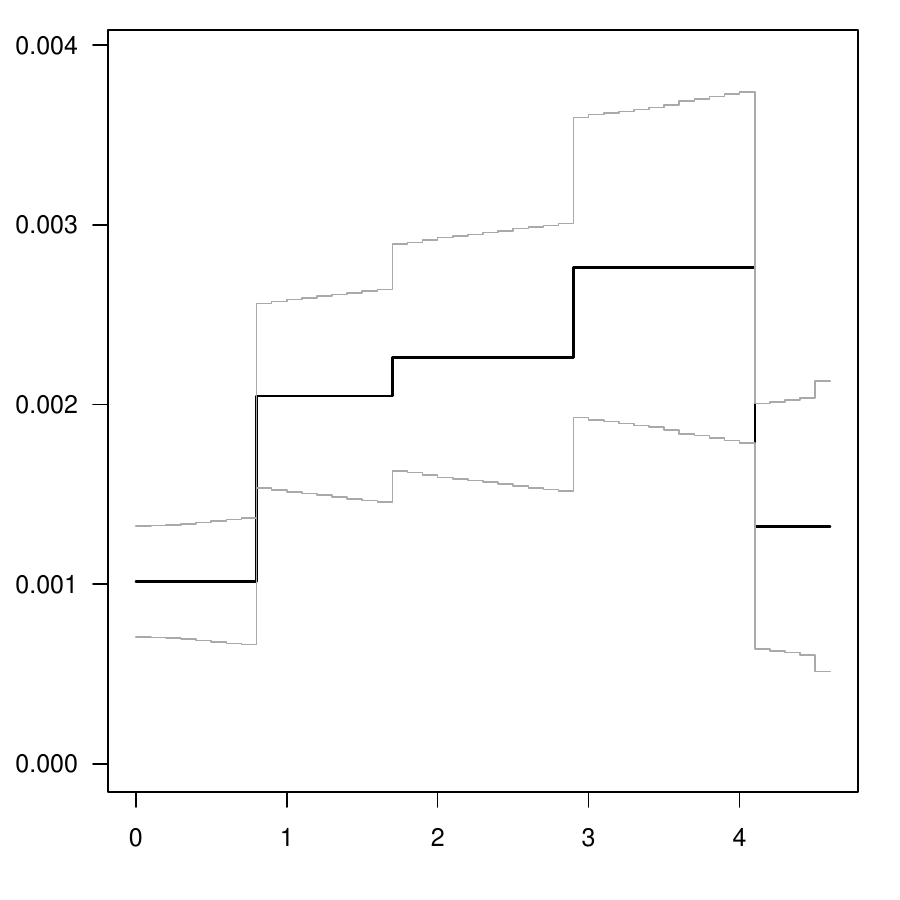}
    \includegraphics[width=0.4\textwidth]{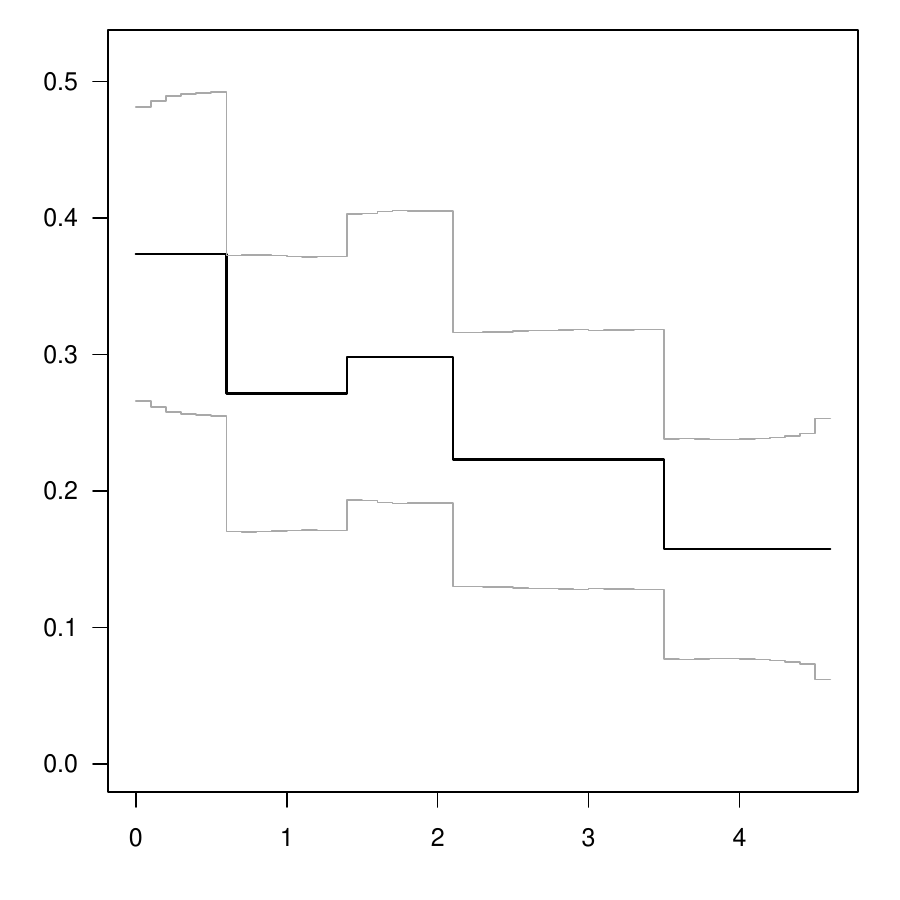}
    \caption{Poisson regression tree estimates. The left panel shows $\mu_{12}$ and the right panel shows $\mu_{23}$, both as functions of time in years. Grey lines indicate pointwise intervals obtained from the pooled occurrence and exposure within the leaf containing the time point.}
    \label{fig:CartPoisson}
\end{figure}

Figures~\ref{fig:StandardPoisson}, \ref{fig:CartPoisson}, and~\ref{fig:LassoPoisson} show the estimated disability and reactivation intensities. The standard occurrence-exposure estimator varies appreciably from bin to bin, especially for $\mu_{23}$. The tree-based method pools neighbouring bins into a small number of regimes, while the fused LASSO yields the most regular profile. The three approaches nevertheless lead to a similar qualitative picture of the main temporal variation in the two intensities. The tree-based and fused LASSO fits are included as data-adaptive comparisons, but such adaptive bin selection is not covered by our asymptotic theory in its present form. An extension in that direction, possibly with the same limit variance under additional assumptions, is left for future work.

\begin{figure}[htbp!]
    \centering
    \includegraphics[width=0.4\textwidth]{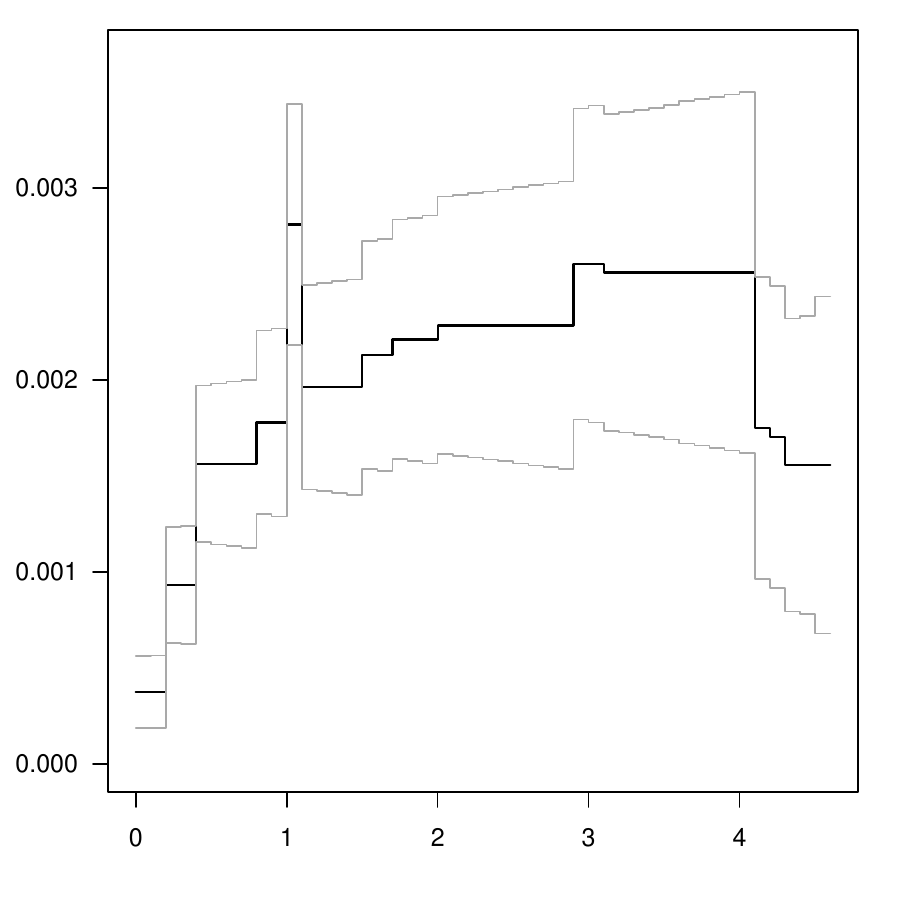}
    \includegraphics[width=0.4\textwidth]{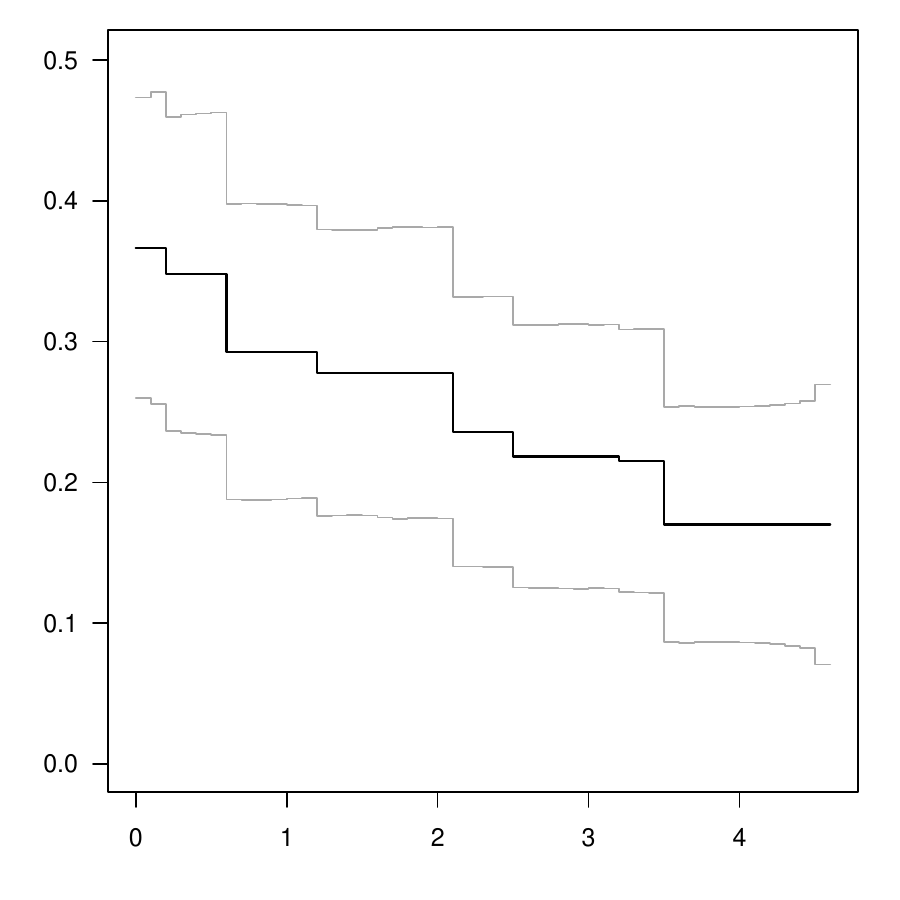}
    \caption{Fused LASSO estimates. The left panel shows $\mu_{12}$ and the right panel shows $\mu_{23}$, both as functions of time in years. The black curves are the regularized estimates, and the grey lines are heuristic pointwise bands obtained from the local Poisson variance formula.}
    \label{fig:LassoPoisson}
\end{figure}

Finally, we close by mentioning that piecewise constant estimators are only one possibility. If a smoother representation is desired, one may instead consider generalized additive models, parametric intensity models, or boosting methods based on Poisson loss. For such procedures, however, a pointwise asymptotic normality theory for uncertainty quantification is currently an open problem.

\section{Discussion and further work}\label{sec:Discussion}

In this article, we have obtained new results for Poisson regression for Markov and semi-Markov jump processes where the time and duration intervals are shrinking as data grows. We have presented and proved asymptotic normality results for the occurence/exposure rate estimators under appropriate shrinking rates on the interval lengths and without structural assumptions on the true intensities. 

We see many avenues for further studies. First and foremost, the partitions were deterministic and even equidistant, resulting in possibly suboptimal splits for a finite data sample. It is of interest to pursue data-driven splits and investigate whether the same or suitably adapted asymptotic normality results still hold. For practical implementations, one should also develop different test statistics to determine the optimal splits for concrete data. In the present deterministic partitioning scheme, it is relevant to investigate whether other asymptotic properties hold such as uniform consistency. Uniform consistency of the transition rates in our setup would, via continuity of the product integral in the supremum norm (see \cite{GillJohansen}), translate into uniform consistency of transition probabilities, which is of interest in especially life insurance applications such as computing cash flows. Lastly, as was already mentioned above, it is of interest to pursue asymptotic normality for the tree-based and fused LASSO estimators used in the numerical study, allowing for theoretically sound confidence bands.


%

\begin{funding}
Both authors were supported by the Carlsberg Foundation, grant CF23-1096, and by the project framework InterAct.
\end{funding}



\newpage
\bibliographystyle{imsart-number} 
\bibliography{main.bib}       


\newpage

\appendix

\section{Additional proofs}\label{sec:Proofs}

\begin{proof}[Proof of Lemma~\ref{lem:meanvarianceMarkov}]
We prove each assertion.
\begin{enumerate}
\item Note that
\begin{equation*}
    X_{i, n} = \int_{I_n} 1_{(s \leq R^i)} N_{jk}^i(\dd s)
\end{equation*}
and thus by random right-censoring,
\begin{align*}
    \mu_n^{jk} &= \int_{I_n} \BE[1_{(s < R)}1_{(Z_{s-} = j)}] q_{jk}(\dd s) = \int_{I_n} \BP(s \leq R, Z_{s-} = j) \mu_{jk}(s)\dd s \\
    &= \int_{I_n} p_j^\texttt{c}(s)\mu_{jk}(s)\dd s.
\end{align*}
The asymptotic expression follows by applying a Taylor expansion around $t$.
\item We have
\begin{equation*}
    \eta_n^j = \int_{I_n} \BE[1_{(s < R)}1_{(Z_{s-} = j)}] \dd s = \int_{I_n} p_j^\texttt{c}(s)\dd s = \Delta_n p_j^\texttt{c}(t) + O(\Delta_n^2),
\end{equation*}
again by a Taylor expansion.
\item Start by noting that
\begin{equation*}
    X_{i,n} = N_{jk}^i(t_m \land R^i) - N_{jk}^i(t_{m - 1} \land R^i).
\end{equation*}
Defining $M_{jk}(t) := N_{jk}(t) - \int_0^t 1_{(Z_{s-} = j)}\mu_{jk}(s)\dd s$, we have
\begin{align*}
    \BE[X_{i,n}^2] &= \BE[(M_{jk}(t_m \land R) - M_{jk}(t_{m - 1} \land R))^2] \\
    &+ \BE\Big[\Big(\int_{I_n} 1_{(s \leq R)} 1_{(Z_{s-} = j)}\mu_{jk}(s)\dd s\Big)^2 \Big] \\
    &+ 2\BE\Big[(M_{jk}(t_m \land R) - M_{jk}(t_{m - 1} \land R)) \int_{I_n} 1_{(s \leq R)} 1_{(Z_{s-} = j)}\mu_{jk}(s)\dd s \Big].
\end{align*}
Since $\mu_{jk}$ is $C^1$ close to $t$, for large enough $n$, $\mu_{jk}$ is continuous on $I_n$ and thus bounded. From this observation, it is not difficult to see that the second and third terms are $O(\Delta_n^2)$. Using Lemma~\ref{lem:martingaleshiftedsecondmoment}, we get for the first term that
\begin{equation*}
    \BE[(M_{jk}(t_m \land R) - M_{jk}(t_{m - 1} \land R))^2] = \BE\Big[\int_{I_n} 1_{(s \leq R)} 1_{(Z_{s-} = j)} \mu_{jk}(s)\dd s \Big]
\end{equation*}
which equals $\mu_n^{jk}$. Hence
\begin{equation*}
    \operatorname{Var}(X_{i, n}) = \mu_n^{jk} - (\mu_n^{jk})^2 + O(\Delta_n^2)
\end{equation*}
and from 1, it immediately follows that
\begin{equation*}
	\operatorname{Var}(X_{1, n}) = \Delta_n( p_j^\texttt{c}(t)\mu_{jk}(t) + O(\Delta_n)). 
\end{equation*}
\item Finally,
\begin{align*}
    \operatorname{Var}(Y_{1, n}) &= \BE[Y_{1, n}^2] - (\eta_n^j)^2 = \BE\Big[\int_{I_n} \int_{I_n} 1_{(s \lor u < R)}1_{(Z_s = j, Z_u = j)}\dd s \dd u \Big] - (\eta_n^j)^2 \\
    &= \int_{I_n}\int_{I_n} \BP(s \lor u < R, Z_s = j, Z_u = j)\dd s \dd u - (\eta_n^j)^2.
\end{align*}
Applying a two-dimensional Taylor expansion in the point $(t, t)$ yields
\begin{equation*}
    \operatorname{Var}(Y_{1, n}) = \Delta_n^2 p_j^\texttt{c}(t) + O(\Delta_n^2)
\end{equation*}
as claimed.
\end{enumerate}
\end{proof}

\begin{proof}[Proof of Lemma~\ref{lem:Lyapunov}]
Letting $M_{jk}^i$ denote the counting process martingale for $N_{jk}^i$, we can decompose
\begin{equation*}
    X_{i, n} - \mu_n^{jk} = \int_{I_n} 1_{(s \leq R^i)}\dd M_{jk}^i(s) + \int_{I_n} (1_{(s \leq R^i)}1_{(Z_{s-} = j)} - p_j^\texttt{c}(s))\mu_{jk}(s)\dd s,
\end{equation*}
and so by Minkowski's inequality,
\begin{align*}
    \BE[(X_{i, n} - \mu_n^{jk})^4]^{1/4} &\leq \BE\Big[ \Big(\int_{I_n} 1_{(s \leq R^i)}\dd M_{jk}^i(s) \Big)^4\Big]^{1/4} \\
    &\quad+ \BE\Big[\Big(\int_{I_n} (1_{(s \leq R)}1_{(Z_{s-} = j)} - p_j^\texttt{c}(s))\mu_{jk}(s)\dd s \Big)^4 \Big]^{1/4}
\end{align*}
Using the formula for the fourth moment in Corollary~\ref{cor:shiftedfourthmoment}, we see that the first term is $O(\Delta_n^{1/4})$ by using that we have assumed fourth moments and the fact that the intensities are bounded on $I_n$ for large enough $n$. The integrand in the second term is bounded (for $n$ large enough), and so we can conclude that
\begin{equation*}
    \BE[(X_{i, n} - \mu_n^{jk})^4] = O(\Delta_n). 
\end{equation*}

Now, the fourth moment of each term in the sum
\begin{equation*}
    \sum_{i = 1}^n \frac{X_{i, n} - \mu_n^{jk}}{\sqrt{\operatorname{Var}(\sum_{i = 1}^n X_{i,n}})}
\end{equation*}
is by Lemma~\ref{lem:meanvarianceMarkov} given by
\begin{equation*}
    \BE\Big[\Big(\frac{X_{i, n} - \mu_n^{jk}}{\sqrt{\operatorname{Var}(\sum_{i = 1}^n X_{i,n}})}\Big)^4\Big] = \frac{O(\Delta_n)}{(\operatorname{Var}(\sum_{i = 1}^n X_{i, n}))^2} = O\Big(\frac{\Delta_n}{n^2 \Delta_n^2} \Big) = O\Big(\frac{1}{n} \cdot \frac{1}{n\Delta_n} \Big),
\end{equation*}
so
\begin{equation*}
    \sum_{i = 1}^n \BE\Big[\Big(\frac{X_{i, n} - \mu_n^{jk}}{\sqrt{\operatorname{Var}(\sum_{i = 1}^n X_{i,n}})}\Big)^4\Big] = O\Big(\frac{1}{n\Delta_n}\Big) \to 0,
\end{equation*}
showing that Lyapunov's condition of order 4 holds.
\end{proof}

\begin{proof}[Proof of Corollary \ref{cor:asymptoticIndependenceMarkov}]
To obtain joint asymptotic dependence, we apply the Cramér--Wold device (see e.g. Corollary 6.5 in \cite{Kallenberg}). Let $\lambda_1, \lambda_2 \in \BR$. Then we need to show weak convergence of
\begin{align*}
    &\lambda_1 \sqrt{n\Delta_n}(\hat{\mu}^{m(t)} - \mu_{jk}(t)) + \lambda_2 \sqrt{n\Delta_n}(\hat{\mu}^{m(s)} - \mu_{jk}(s)) = \\
    &\lambda_1 \sqrt{n\Delta_n} \frac{O_{jk}(m(t)) - n\mu_n^{jk}(t)}{E_j(m(t))} + \lambda_2 \sqrt{n\Delta_n} \frac{O_{jk}(m(s)) - n\mu_n^{jk}(s)}{E_j(m(s))} + \\
    &\lambda_1 \sqrt{n\Delta_n} \frac{n\mu_n^{jk}(t) - \mu_{jk}(t)E_j(m(t))}{E_j(m(t))} + \lambda_2 \sqrt{n\Delta_n} \frac{n\mu_n^{jk}(s) - \mu_{jk}(s)E_j(m(s))}{E_j(m(s))}.
\end{align*}
From the proof of Theorem \ref{thm:PoissonMarkov}, we know that the latter two terms converge to zero in probability, so we need only show weak convergence of the sum of the first two terms. Recall that $E_j(m(t)) = n \Delta_n (p_j^\texttt{c}(t) + o_\BP(1))$, so we need to establish weak convergence of
\begin{equation*}
    \lambda_1 \frac{O_{jk}(m(t)) - n\mu_n^{jk}(t)}{\sqrt{n\Delta_n}(p_j^\texttt{c}(t) + o_\BP(1))} + \lambda_2 \frac{O_{jk}(m(s)) - n\mu_n^{jk}(s)}{\sqrt{n\Delta_n}(p_j^\texttt{c}(s) + o_\BP(1))}.
\end{equation*}
By subtracting this quantity without the $o_\BP(1)$ terms, it becomes clear that the difference is $o_\BP(1)$ so that we may ignore the $o_\BP(1)$ terms. Define the array
\begin{equation*}
    W_{ni}(t, s) := \lambda_1 \frac{X_{i, n}(t) - \mu_n^{jk}(t)}{\sqrt{n\Delta_n}p_j^\texttt{c}(t)} + \lambda_2 \frac{X_{i, n}(s) - \mu_n^{jk}(s)}{\sqrt{n\Delta_n}p_j^\texttt{c}(s)}.
\end{equation*}
The $W_{ni}$ have mean zero and are independent within rows. From Minkowski's inequality,
\begin{align*}
    \BE[W_{ni}^4(t, s)]^{1/4} &\leq \lambda_1 \frac{\BE[(X_{i, n}(t) - \nu_n^{jk}(t))^4]^{1/4}}{\sqrt{n\Delta_n} p_j^\texttt{c}(t)} + \lambda_2 \frac{\BE[(X_{i, n}(s) - \nu_n^{jk}(s))^4]^{1/4}}{\sqrt{n\Delta_n} p_j^\texttt{c}(s)} \\
    &= O\left(\frac{\Delta_n^{1/4}}{\sqrt{n\Delta_n}}\right) = O(n^{-1/2}\Delta_n^{-1/4})
\end{align*}
where we again used results from the proof of Theorem \ref{thm:PoissonMarkov}. It follows that
\begin{equation*}
    \sum_{i = 1}^n \BE[W_{ni}(t, s)^4] = O\left(\frac{1}{n\Delta_n}\right),
\end{equation*}
showing that the Lyapunov condition holds. We need only compute the limiting variance. From Lemma \ref{lem:meanvarianceMarkov},
\begin{align*}
    \Var(W_{ni}(t, s)) &= \lambda_1^2 \frac{\Var(X_{i, n}(t))}{n\Delta_n p_j^\texttt{c}(t)} + \lambda_2^2 \frac{\Var(X_{i, n}(s))}{n\Delta_n p_j^\texttt{c}(s)} + \frac{2\lambda_1\lambda_2}{n\Delta_n p_j^\texttt{c}(t)p_j^\texttt{c}(s)}\Cov(X_{i, n}(t), X_{i, n}(s)) \\
    &= \lambda_1^2 \frac{\mu_{jk}(t)}{p_j^\texttt{c}(t)} + \lambda_2^2 \frac{\mu_{jk}(s)}{p_j^\texttt{c}(s)} + O(\Delta_n) + \frac{2\lambda_1\lambda_2}{n\Delta_n p_j^\texttt{c}(t)p_j^\texttt{c}(s)}\Cov(X_{i, n}(t), X_{i, n}(s)).
\end{align*}
Now suppose $n$ is so large such that $I_n(t) \neq I_n(s)$. Then $I_n(t) \cap I_n(s) = \emptyset$, and we may without loss of generality assume $s < t$ such that $t_{m(s)} < t_{m(t) - 1}$. Decompose
\begin{equation*}
    X_{i, n}(t) = M_{jk}^i(t_{m(t)} \land R^i) - M_{jk}^i(t_{m(t) - 1} \land R^i) + \int_{I_n(t)} 1_{(v \leq R^i)} 1_{(Z_{v-}^i = j)} \mu_{jk}(v)\dd v
\end{equation*}
and similarly for $X_{i, n}(s)$. Then
\begin{align*}
    X_{i, n}(t)X_{i, n}(s) &= (M_{jk}^i(t_{m(t)} \land R^i) - M_{jk}^i(t_{m(t) - 1} \land R^i))(M_{jk}^i(t_{m(s)} \land R^i) - M_{jk}^i(t_{m(s) - 1} \land R^i)) \\
    &+ (M_{jk}^i(t_{m(t)} \land R^i) - M_{jk}^i(t_{m(t) - 1} \land R^i))\int_{I_n(s)} 1_{(v \leq R^i)} 1_{(Z_{v-}^i = j)} \mu_{jk}(v)\dd v \\
    &+ (M_{jk}^i(t_{m(s)} \land R^i) - M_{jk}^i(t_{m(s) - 1} \land R^i))\int_{I_n(t)} 1_{(v \leq R^i)} 1_{(Z_{v-}^i = j)} \mu_{jk}(v)\dd v \\
    &+ \int_{I_n(t)} 1_{(v \leq R^i)} 1_{(Z_{v-}^i = j)} \mu_{jk}(v)\dd v \int_{I_n(s)} 1_{(v \leq R^i)} 1_{(Z_{v-}^i = j)} \mu_{jk}(v)\dd v \\
    &=: (1) + (2) + (3) + (4).
\end{align*}
Taking expectation, we have, just like in the proof of Lemma \ref{lem:meanvarianceMarkov} that terms (2), (3) and (4) are $O(\Delta_n^2)$. As for the first term, we can apply the tower property with $\mathcal{G}_{t_{m(s)}}$ and the fact that 
\begin{equation*}
    \BE[M_{jk}(t_{m(s)} \land R) - M_{jk}(t_{m(s) - 1} \land R) \mid \mathcal{G}_{t_{m(s)}}] = 0
\end{equation*}
to conclude that $\BE[(1)] = 0$. All in all, we may conclude that $\BE[X_{i, n}(t)X_{i, n}(s)] = O(\Delta_n^2)$ and thus
\begin{align*}
    \sum_{i = 1}^n \Var(W_{i, n}(t, s)) &= \lambda_1^2 \frac{\mu_{jk}(t)}{p_j^\texttt{c}(t)} + \lambda_2^2 \frac{\mu_{jk}(s)}{p_j^\texttt{c}(s)} + O(\Delta_n) \\
    &\to \begin{pmatrix} \lambda_1 & \lambda_1 \end{pmatrix} \begin{pmatrix} \frac{\mu_{jk}(t)}{p_j^\texttt{c}(t)} & 0 \\ 0 & \frac{\mu_{jk}(s)}{p_j^\texttt{c}(s)} \end{pmatrix} \begin{pmatrix}
        \lambda_1 \\ \lambda_2 \end{pmatrix}.
\end{align*}
The proof is now complete. 
\end{proof}

\begin{proof}[Proof of Lemma \ref{lem:meanvariancesemiMarkov}]
The proof is similar to the one for Lemma \ref{lem:meanvarianceMarkov}. The key difference is in how the Taylor expansions are applied.
\begin{enumerate}
\item By random right-censoring, 
\begin{align*}
    \BE[X_{i, n}] &= \BE\Big[\int_{I_n^{(1)}} 1_{(s \leq R)}1_{(u_{m_2} - 1 \leq U_{s-} < u_{m_2})} 1_{(Z_{s-}^i = j)} \mu_{jk}(s, U_{s-})\dd s\Big] \\
    &= \int_{I_n^{(1)}} \BE[1_{(s \leq R)}1_{(u_{m_2} - 1 \leq U_{s-} < u_{m_2})} 1_{(Z_{s-} = j)} \mu_{jk}(s, U_{s-})] \dd s. 
\end{align*}
The joint measure of $(Z_t, U_t)$ is for $F \subseteq \mathcal{Z}$ and $G \in \mathcal{B}(\BR)$ given by
\begin{equation*}
    p_F(t, G) = \sum_{j \in F} \int_G p_j(t, \dd u),
\end{equation*}
where $p_j(t, u) = \BP(Z_t = j, U_t \leq u)$ so the inner expectation equals
\begin{equation*}
    \int_{I_n^{(2)}} \mu_{jk}(s, v) p_j^\texttt{c}(s, \dd v) = \int_{I_n^{(2)}} \mu_{jk}(s, v) \partial_2p_j^\texttt{c}(s, v)\dd v.
\end{equation*}
To summarise,
\begin{equation*}
    \mu_n^{jk} = \int_{I_n^{(1)}}\int_{I_n^{(2)}} \mu_{jk}(s, v) \partial_2p_j^\texttt{c}(s, v)\dd v \dd s.
\end{equation*}
Since both $p_j^\texttt{c}$ and $\mu_{jk}$ are $C^2$, the integrand is $C^1$ in the second variable and $C^2$ in the first. Do a Taylor expansion around $s = t$ to get
\begin{equation*}
    \mu_n^{jk} = \Delta_n^{(1)} \int_{I_n^{(2)}} \mu_{jk}(t, v) \partial_2 p_j^\texttt{c}(t, v)\dd v + O(\big(\Delta_n^{(1)}\big)^2 \Delta_n^{(2)}).
\end{equation*}
Next, do a Taylor expansion around $v = u$ to obtain
\begin{equation*}
    \mu_n^{jk} = \Delta_n^{(1)}\Delta_n^{(2)}\mu_{jk}(t, u)\partial_2p_j^\texttt{c}(t, u) + O\Big(\big(\Delta_n^{(1)}\big)^2 \Delta_n^{(2)}\Big) + O\Big(\Delta_n^{(1)} \big(\Delta_n^{(2)}\big)^2\Big).
\end{equation*}
\item We have
\begin{align*}
    \eta_n^j &= \BE\Big[\int_{I_n^{(1)}}1_{(u_{m_2 - 1} \leq U_{s-} < u_{m_2})}1_{(s \leq R)}1_{(Z_{s-} = j)}\dd s \Big] \\
    &= \int_{I_n^{(1)}}p_j^\texttt{c}(s, u_{m_2}) - p_j^\texttt{c}(s, u_{m_2 - 1})\dd s.
\end{align*}
Start by doing a second order Taylor expansion around $u$ in the second coordinate to get
\begin{equation*}
    p_j^\texttt{c}(s, v) = p_j^\texttt{c}(s, u) + \partial_2p_j^\texttt{c}(s, u)(v - u) + O\Big(\big(\Delta_n^{(2)}\big)^2\Big).
\end{equation*}
Due to continuity of the second derivative, we may choose the $O$-term to be independent of $s$ since the interval $I_n^{(1)}$ is bounded. Using this expansion on both terms in the integral yields
\begin{equation*}
    \eta_n^j = \Delta_n^{(2)}\int_{I_n^{(1)}} \partial_2p_j^\texttt{c}(s, u)\dd s + O\Big(\Delta_n^{(1)} \big(\Delta_n^{(2)}\big)^2\Big).
\end{equation*}
Finally, do a first order Taylor expansion around $s = t$ to obtain the desired,
\begin{equation*}
    \eta_n^j = \Delta_n^{(1)} \Delta_n^{(2)} \partial_2p_j^\texttt{c}(t, u) + O\Big(\big(\Delta_n^{(1)}\big)^2 \Delta_n^{(2)}\Big) + O\Big(\Delta_n^{(1)} \big(\Delta_n^{(2)}\big)^2\Big).
\end{equation*}
\item To compute $\operatorname{Var}(X_{i, n})$, we apply a similar strategy as for the Markov case. We decompose
\begin{align*}
    \BE[X_{i, n}^2] &= \BE\Big[\Big(\int_{I_n^{(1)}} 1_{(s \leq R)}1_{(u_{m_2 - 1} \leq U_{s-} < u_{m_2})} \dd N_{jk}(s) \Big)^2 \Big] \\
    &= \BE\Big[\Big(\int_{I_n^{(1)}} 1_{(s \leq R)}1_{(u_{m_2 - 1} \leq U_{s-} < u_{m_2})} \dd M_{jk}(s) \Big)^2 \Big] \\
    &+ \BE\Big[\Big(\int_{I_n^{(1)}}1_{(s \leq R)}1_{(u_{m_2 - 1} \leq U_{s-} < u_{m_2})} 1_{(Z_{s-} = j)}\mu_{jk}(s, U_{s-}) \dd s \Big)^2 \Big] \\
    &+2\BE\Big[\Big(\int_{I_n^{(1)}} 1_{(s \leq R)}1_{(u_{m_2 - 1} \leq U_{s-} < u_{m_2})} \dd M_{jk}(s) \Big) \\
    &\cdot \Big(\int_{I_n^{(1)}}1_{(s \leq R)}1_{(u_{m_2 - 1} \leq U_{s-} < u_{m_2})} 1_{(Z_{s-} = j)}\mu_{jk}(s, U_{s-})\dd s \Big)\Big] \\
    &=: (1) + (2) + (3).
\end{align*}
It follows immediately from Theorem II.3.1 in \cite{ABGK} that
\begin{equation*}
    (1) = \BE\Big[\int_{I_n^{(1)}} 1_{(s \leq R)}1_{(u_{m_2 - 1} \leq U_{s-} < u_{m_2})} 1_{(Z_{s-} = j)}\mu_{jk}(s, U_{s-}) \dd s \Big],
\end{equation*}
which we recognise as  $\mu_n^{jk}$. As for the second term, for large enough $n$, the integrand (which is positive) is bounded by $M 1_{(s \leq R)}1_{(u_{m_2 - 1} \leq U_{s-} < u_{m_2})} 1_{(Z_{s-} = j)}$ for some constant $M$ due to the continuity assumption on $\mu_{jk}$ and the fact that $\Delta_n^{(1)}$ and $\Delta_n^{(2)}$ both shrink to zero. Hence
\begin{align*}
    \BE[(2)] &\leq M^2 \BE\Big[\int_{I_n^{(1)}}\int_{I_n^{(1)}} 1_{(s_1, s_2 \leq R)}1_{(u_{m_2 - 1} \leq U(s_1-), U(s_2-) < u_{m_2})} 1_{(Z_{s_1-} = j)}1_{(Z_{s_2-} = j)}\dd s_1 \dd s_2 \Big] \\
    &= M^2 \int_{I_n^{(1)}}\int_{I_n^{(1)}} p_j^\texttt{c}(s_1, s_2, u_{m_2}) - p_j^\texttt{c}(s_1, s_2, u_{m_2 - 1}) \dd s_1 \dd s_2
\end{align*}
where, with slight abuse of notation, we have defined
\begin{equation*}
    p_j^\texttt{c}(s_1, s_2, v) = \BP(U(s_1), U(s_2) \leq v, s_1, s_2 \leq R, Z_{s_1-} = j, Z_{s_2-} = j).
\end{equation*}
Doing a second order Taylor expansion in the point $(t, t, u)$, we get
\begin{equation*}
    p_j^\texttt{c}(s_1, s_2, u_{m_2}) - p_j^\texttt{c}(s_1, s_2, u_{m_2 - 1}) = \Delta_n^{(2)} \partial_2p_j^\texttt{c}(t, u) + O\Big(\big(\Delta_n^{(1)}\big)^2 + \big(\Delta_n^{(2)}\big)^2\Big),
\end{equation*}
letting us conclude that $\BE[(2)] = O\Big(\big(\Delta_n^{(1)}\big)^2 \Delta_n^{(2)}\Big)$. As for (3), we bound the second integral in a very crude fashion by $M \Delta_n^{(1)}$ (for $n$ large enough). Then
\begin{equation*}
    \BE[(3)] \leq 2M\Delta_n^{(1)}\mu_n^{jk} = O\Big(\big(\Delta_n^{(1)}\big)^2 \Delta_n^{(2)}\Big)
\end{equation*}
which is of the same order as (2). The claim for $\operatorname{Var}(X_{1, n})$ now follows by applying 1. 
\item Finally, 
\begin{align*}
    \BE[Y_{1, n}^2] &= \int_{I_n^{(1)}}\int_{I_n^{(1)}} \BP(s_1, s_2 \leq R, u_{m_2 - 1} \leq U(s_1-) , U(s_2-) < u_{m_2}, Z_{s_1-} = j, Z_{s_2-} = j) \dd s_1 \dd s_2 \\
    &= \int_{I_n^{(1)}}\int_{I_n^{(1)}} p_j^\texttt{c}(s_1, s_2, u_{m_2}) - p_j^\texttt{c}(s_1, s_2, u_{m_2 - 1})\dd s_1 \dd s_2
\end{align*}
with $p_j^\texttt{c}(s_1, s_2, v)$ defined in the proof of 3. Now apply a second order Taylor expansion around $v = u$ to get
\begin{equation*}
    p_j^\texttt{c}(s_1, s_2, u_{m_2}) - p_j^\texttt{c}(s_1, s_2, u_{m_2 - 1}) = \partial_2p_j^\texttt{c}(s_1, s_2, u)\Delta_n^{(2)} + O\Big(\big(\Delta_n^{(2)}\big)^2\Big),
\end{equation*}
yielding
\begin{equation*}
    \BE[Y_{1, n}^2] = \Delta_n^{(2)}\int_{I_n^{(1)}}\int_{I_n^{(1)}} \partial_2p_j^\texttt{c}(s_1, s_2, u)\dd s_1 \dd s_2 + O(\big(\Delta_n^{(1)}\big)^2 \big(\Delta_n^{(2)}\big)^2).
\end{equation*}
Now apply a first order Taylor expansion around $(t, t)$ to get
\begin{equation*}
    \BE[Y_{1, n}^2] = \big(\Delta_n^{(1)}\big)^2 \Delta_n^{(2)} \partial_2p_j^\texttt{c}(t, u) + O\Big(\big(\Delta_n^{(1)}\big)^2 \big(\Delta_n^{(2)}\big)^2\Big) + O\Big(\big(\Delta_n^{(1)}\big)^3 \Delta_n^{(2)}\Big).
\end{equation*}
Since $\eta_n^j = O\Big(\big(\Delta_n^{(1)}\big)^2 \big(\Delta_n^{(2)}\big)^2\Big)$, this asymptotic result also applies to the variance, and the proof is complete. 
\end{enumerate}
\end{proof}

\begin{proof}[Proof of Lemma \ref{lem:LyapunovSemiMarkov}]
Start by decomposing
\begin{align*}
    &X_{i, n} - \mu_n^{jk} = \int_{I_n^{(1)}} 1_{(s \leq R^i)}1_{(u_{m_2 - 1} \leq U_{s-}^i < u_{m_2})} \dd M_{jk}^i(s) \\
    &+ \int_{I_n^{(1)}}\Big( 1_{(s \leq R^i)}1_{(u_{m_2 - 1} \leq U_{s-}^i < u_{m_2})} 1_{(Z_{s-}^i = j)}\mu_{jk}(s, U_{s-}^i) - \int_{I_n^{(2)}} \mu_{jk}(s, v) \partial_2p_j^\texttt{c}(s, v) \dd v\Big) \dd s \\
    &=: (I) + (II).
\end{align*}
We apply a Minkowski inequality in order to bound $\BE[(X_{i, n} - \mu_n^{jk})^4]$. As for the first term, note that $s \mapsto 1_{(s \leq R^i)}1_{(u_{m_2 - 1} \leq U_{s-}^i < u_{m_2})}$ is predictable. Hence by Lemma \ref{lem:generalfourthmoment}, 
\begin{equation*}
    \BE[(I)^4] = \int_{I_n^{(1)}} \BE\Big[(6(I)^2 + (I) + 1)1_{(s \leq R^i)}1_{(u_{m_2 - 1} \leq U_{s-}^i < u_{m_2})} 1_{(Z^i_{s-} = j)}\mu_{jk}(s, U_{s-}^i)\Big]\dd s.
\end{equation*}
Apply a first order Taylor expansion to conclude that $\BE[(I)^4] = O\big(\Delta_n^{(1)}\Delta_n^{(2)}\big)$. As for (II), denote
\begin{equation*}
    A_n := \int_{I_n^{(1)}} 1_{(s \leq R^i)}1_{(u_{m_2 - 1} \leq U_{s-}^i < u_{m_2})}1_{(Z_{s-}^i = j)}\mu_{jk}(s, U_{s-}^i)\dd s
\end{equation*}
so that $(II) = A_n - \BE[A_n]$. Since $A_n \geq 0$, we have
\begin{align*}
    \BE[(II)^4] &= \BE[A_n^4] - 4\BE[A_n^4]\BE[A_n] + 6\BE[A_n^2]\BE[A_n]^2 - 3\BE[A_n]^4 \\
    &\leq \BE[A_n^4] + 6\BE[A_n^2] \BE[A_n]^2.
\end{align*}
Using standard Taylor arguments, it is not difficult to see that $\BE[A_n^4] = O\Big(\big(\Delta_n^{(1)} \big)^4 \Delta_n^{(2)}\Big)$ and $\BE[A_n^2] = O\Big(\big(\Delta_n^{(1)} \big)^2 \Delta_n^{(2)}\Big)$. Since $\BE[A_n] = O(\Delta_n^{(1)} \Delta_n^{(2)})$, we get $\BE[(II)^4] = O\Big(\big(\Delta_n^{(1)}\big)^4 \Delta_n^{(2)} \Big)$. All in all, from Minkowski's inequality,
\begin{align*}
    \BE[(X_{i, n} - \mu_n^{jk})^4]^{1/4} &= O\Big(\big(\Delta_n^{(1)} \big)^{1/4}\big(\Delta_n^{(2)} \big)^{1/4} \Big) + O\Big(\Delta_n^{(1)} \big(\Delta_n^{(2)}\big)^{1/4} \Big) \\
    &= O\Big(\big(\Delta_n^{(1)} \big)^{1/4}\big(\Delta_n^{(2)} \big)^{1/4} \Big),
\end{align*}
so
\begin{equation*}
    \BE\left[\left(\frac{X_{i, n} - \mu_n^{jk}}{\sqrt{\operatorname{Var}(\sum_{i = 1} X_{i, n})}} \right)^4 \right] = O\left(\frac{\Delta_n^{(1)}\Delta_n^{(2)}}{n^2 \big(\Delta_n^{(1)}\big)^2\big(\Delta_n^{(2)}\big)^2} \right) = O\left(\frac{1}{n} \cdot \frac{1}{n\Delta_n^{(1)}\Delta_n^{(2)}} \right),
\end{equation*}
completing the proof.
\end{proof}

\begin{proof}[Proof of Corollary \ref{cor:asymptoticIndependenceSemiMarkov}]
Analogous to the Markov case, it suffices to show weak convergence of
\begin{equation*}
    \lambda_1 \frac{O_{jk}(m_1(t), m_2(u)) - n\mu_n^{jk}(t, u)}{\sqrt{n\Delta_n^{(1)}\Delta_n^{(2)}} \partial_2 p_j^\texttt{c}(t, u)} + \lambda_2 \frac{O_{jk}(m_1(s), m_2(v)) - n\mu_n^{jk}(s, v)}{\sqrt{n\Delta_n^{(1)}\Delta_n^{(2)}} \partial_2 p_j^\texttt{c}(s, v)}
\end{equation*}
where $\lambda_1, \lambda_2 \in \BR$ are arbitrary. We can write this as $\sum_{i = 1}^n W_{ni}(t, u, s, v)$ with
\begin{equation*}
    W_{ni}(t, u, s, v) := \lambda_1 \frac{X_{i, n}(t, u) - \mu_n^{jk}(t, u)}{\sqrt{n\Delta_n^{(1)}\Delta_n^{(2)}} \partial_2 p_j^\texttt{c}(t, u)} + \lambda_2 \frac{X_{i, n}(s, v) - \mu_n^{jk}(s, v)}{\sqrt{n\Delta_n^{(1)}\Delta_n^{(2)}} \partial_2 p_j^\texttt{c}(s, v)}.
\end{equation*}
From Lemma \ref{lem:meanvariancesemiMarkov} and the same arguments as in the Markov case, it follows that
\begin{equation*}
    \sum_{i = 1}^n \BE[W_{ni}^4(t, u, s, v)] = O\left(\frac{1}{n\Delta_n^{(1)}\Delta_n^{(2)}}\right)
\end{equation*}
so that the Lyapunov condition of order 4 holds. Again by Lemma \ref{lem:meanvariancesemiMarkov},
\begin{align*}
    \sum_{i = 1}^n \Var(W_{ni}(t, u, s, v)) &= \lambda_1^2 \frac{\mu_{jk}(t, u)}{\partial_2 p_j^\texttt{c}(t, u)} + \lambda_2^2 \frac{\mu_{jk}(s, v)}{\partial_2 p_j^\texttt{c}(s, v)} + O(\Delta_n^{(1)}) + O(\Delta_n^{(2)}) \\
    &+ \frac{2\lambda_1 \lambda_2}{\Delta_n^{(1)}\Delta_n^{(2)}p_j^\texttt{c}(t, u)\partial_2 p_j^\texttt{c}(s, v)} \Cov(X_{i, n}(t, u), X_{i, n}(s, v)).
\end{align*}
It remains to compute the covariance. Just like for Markov,
\begin{align*}
    X_{i, n}(t, u) &= \int_{I_n^{(1)}(t)} 1_{(w \leq R^i)} 1_{(u_{m_2(u) - 1} \leq U_{w-}^i < u_{m_2(u)})} \dd M_{jk}^i(w) \\
    &+ \int_{I_n^{(1)}(t)} 1_{(w \leq R^i)} 1_{(u_{m_2(u) - 1} \leq U_{w-}^i < u_{m_2(u)})} 1_{(Z_{w-}^i = j)}\mu_{jk}(w, U_{w-}^i)\dd w.
\end{align*}
The product $X_{i, n}(t, u)X_{i, n}(s, v)$ again yields four terms, the last three being of order $O\big(\big(\Delta_n^{(1)}\big)^2 \Delta_n^{(2)}\big)$. For large enough $n$, the intervals $I_n^{(1)}(t)$ and $I_n^{(1)}(s)$ are disjoint, and the same conditioning argument with the $\mathcal{G}$-filtration and the martingale property yields that the first term has mean zero. We conclude $\Cov(X_{i, n}(t, u), X_{i, n}(s, v)) = O\big(\big(\Delta_n^{(1)} \big)^2 \Delta_n^{(2)} \big)$ so that
\begin{equation*}
    \sum_{i = 1}^n \Var(W_{ni}(t, u, s, v)) = \lambda_1^2 \frac{\mu_{jk}(t, u)}{\partial_2 p_j^\texttt{c}(t, u)} + \lambda_2^2 \frac{\mu_{jk}(s, v)}{\partial_2 p_j^\texttt{c}(s, v)} + O(\Delta_n^{(1)}) + O(\Delta_n^{(2)}),
\end{equation*}
and the proof is complete by the Cramér--Wold device as before. 
\end{proof}

\section{Background on counting processes}\label{sec:background}

This appendix is for useful background results on counting processes that will be used in the proofs of our results. Let $N = (N(t))_{t \geq 0}$ denote a counting process with predictable compensator $\Lambda$. We say that $N$ has a (predictable) intensity process $\lambda$ if we can write
\begin{equation*}
    \Lambda(t) = \int_0^t \lambda(s)\dd s, \quad t \geq 0.
\end{equation*}
In the following, we let $M(t) = N(t) - \Lambda(t)$ denote the corresponding counting process martingale. If $M$ is square-integrable and $N$ has intensity process $\lambda$, it follows from (2.4.3) in \cite{ABGK} that
\begin{equation}\label{eq:quadraticVariation}
	M(t)^2 - \int_0^t \lambda(s)\mathrm{d}s
\end{equation}
is a mean-zero martingale. Using the exact same argument, one can verify the same result for the shifted process $\tilde{M}(t) = M(t) - M(s)$ where $s \geq 0$ is a fixed time. We state this as a lemma.

\begin{lem}\label{lem:martingaleshiftedsecondmoment}
If $N$ has intensity $\lambda$ and $s \geq 0$ is fixed, the process $\tilde{M}(t) = M(t) - M(s)$ for $t \geq s$ is a mean-zero martingale and
\begin{equation*}
	\BE[(M(t) - M(s))^2] = \BE\Big[\int_s^t \lambda(u)\mathrm{d}u\Big].
\end{equation*}
\end{lem}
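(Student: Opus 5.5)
The plan is to mimic the classical proof of the quadratic variation identity (\ref{eq:quadraticVariation}), now applied to increments started at the fixed time $s$. First, $\tilde{M}(t) = M(t) - M(s)$, $t \geq s$, is a martingale with respect to $(\CF_t)_{t \geq s}$. It is adapted and integrable because $M$ is. For $s \leq r \leq t$ we have
\begin{equation*}
    \BE[\tilde{M}(t) \mid \CF_r] = \BE[M(t) \mid \CF_r] - M(s) = M(r) - M(s) = \tilde{M}(r),
\end{equation*}
using that $M(s)$ is $\CF_r$-measurable. It is also mean-zero: $\tilde{M}(s) = 0$, or directly $\BE[M(t)] = \BE[M(s)] = 0$.

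For the second moment, I will expand
\begin{equation*}
    \BE[(M(t) - M(s))^2] = \BE[M(t)^2] - 2\BE[M(t)M(s)] + \BE[M(s)^2].
\end{equation*}
Conditioning on $\CF_s$ in the cross term gives $\BE[M(t)M(s)] = \BE[M(s)\BE[M(t)\mid\CF_s]] = \BE[M(s)^2]$. This step is legitimate because $M$ is square integrable, so $M(t)M(s)$ is integrable by Cauchy--Schwarz. Hence
\begin{equation*}
    \BE[(M(t)-M(s))^2] = \BE[M(t)^2] - \BE[M(s)^2].
\end{equation*}
By (\ref{eq:quadraticVariation}), $M(r)^2 - \int_0^r \lambda(u)\dd u$ is a mean-zero martingale. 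Taking expectations at $r = t$ and $r = s$ gives $\BE[M(r)^2] = \BE[\int_0^r \lambda(u)\dd u]$ for both times. Subtracting these two identities yields the claimed $\BE[\int_s^t \lambda(u)\dd u]$. The subtraction is justified since both expectations are finite, and since $\lambda \geq 0$, linearity of the integral applies.

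The only delicate point I expect is integrability. We need $\BE[M(r)^2] < \infty$ at $r = s$ and $r = t$, which is exactly the square-integrability hypothesis under which (\ref{eq:quadraticVariation}) is stated. In the applications in the paper it follows from the moment assumption $\BE[N_{jk}(t)^2] < \infty$ together with boundedness of the intensity near $t$. If one instead only had local square integrability, I would localize with stopping times $\tau_\ell \uparrow \infty$. I would then apply the argument above to $M^{\tau_\ell}$ and pass to the limit: monotone convergence handles the compensator side, and Fatou plus the resulting uniform bound handles the left side. The same computation also shows that $\tilde{M}(t)^2 - \int_s^t \lambda(u)\dd u$ is a martingale on $[s,\infty)$, matching the phrasing of the lemma.
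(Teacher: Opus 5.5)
Your argument is correct. It is exactly the ``straightforward calculation using (\ref{eq:quadraticVariation})'' that the paper offers as its alternative to redoing the argument behind (2.4.3) of ABGK for the shifted process: the cross term $\BE[M(t)M(s)] = \BE[M(s)^2]$ reduces everything to the difference of the two second moments given by (\ref{eq:quadraticVariation}). Your localization aside is not needed under the paper's square-integrability hypothesis; if you did use it, note that Fatou alone gives only an inequality, and equality requires $L^2$ convergence of the stopped martingales.
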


The lemma can also be proved by straightforward calculations using (\ref{eq:quadraticVariation}). The following result is useful for verifying certain Lyapunov conditions.

\begin{lem}\label{lem:generalfourthmoment}
Let $N$ be a counting process with $\BE[N(t)^4] < \infty$ and intensity $\lambda$, and let $M$ denote the corresponding martingale. If $H$ is a predictable process, then it holds for the process
\begin{equation*}
    X(t) = \int_0^t H(s)\mathrm{d}M(s)
\end{equation*}
that
\begin{equation*}
    \BE[X(t)^4] = \int_0^t \BE[(6X(s)^2 H(s)^2 + 4X(s)H(s)^3 + H(s)^4)\lambda(s)]\dd s
\end{equation*}
and
\begin{equation*}
    \BE[(X(t) - X(s))^4] = \int_s^t \BE[(6(X(u) - X(s))^2 H(u)^2 + 4(X(u) - X(s))H(u)^3 + H(u)^4)\lambda(u)]\dd u.
\end{equation*}
\end{lem}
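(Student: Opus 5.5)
The plan is to prove the first identity by Itô's formula for the pure-jump semimartingale $X$, and then to take expectations, using the martingale property to eliminate the compensated part. Since $M = N - \Lambda$ with $\Lambda$ continuous ($\Lambda(t)=\int_0^t\lambda(s)\dd s$), the process $X(t)=\int_0^t H(s)\dd M(s)$ has finite variation and jumps $\Delta X(s) = H(s)\Delta N(s)$, with $\Delta N(s)\in\{0,1\}$. The change-of-variables formula for finite-variation processes gives
\begin{equation*}
X(t)^4 = \int_0^t 4X(s-)^3 H(s)\dd M(s) + \sum_{s\le t}\Big((X(s-)+H(s)\Delta N(s))^4 - X(s-)^4 - 4X(s-)^3H(s)\Delta N(s)\Big).
\end{equation*}
Expanding the binomial, the jump sum equals
\begin{equation*}
\int_0^t \big(6X(s-)^2H(s)^2 + 4X(s-)H(s)^3 + H(s)^4\big)\dd N(s).
\end{equation*}
Writing $\dd N = \dd M + \lambda\,\dd s$ gives
\begin{equation*}
X(t)^4 = \int_0^t \big(4X(s-)^3H(s) + 6X(s-)^2H(s)^2 + 4X(s-)H(s)^3 + H(s)^4\big)\dd M(s) + \int_0^t \big(6X(s)^2H(s)^2+4X(s)H(s)^3+H(s)^4\big)\lambda(s)\dd s.
\end{equation*}
In the Lebesgue integral, $X(s-)$ may be replaced by $X(s)$, because they differ only at countably many times.

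Taking expectations then yields the claimed formula, provided the stochastic integral against $M$ is a true martingale with mean zero. This is the main obstacle, and it is where the hypothesis $\BE[N(t)^4]<\infty$ enters. The integrand is predictable, since $X(s-)$ is left-continuous and $H$ is predictable. I would also need $H$ to be bounded on $[0,t]$; this holds in all applications, where $H$ is an indicator. Then $|X(s)|\le C(N(t)+\Lambda(t))$, so the integrand is dominated by a polynomial of degree at most $3$ in $N(t)+\Lambda(t)$. A predictable integrand $K$ with $\BE\int_0^t|K(s)|\lambda(s)\dd s<\infty$ gives a mean-zero martingale (ABGK, Section II.3). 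This integrability condition can be checked via $\BE\int|K|\dd N = \BE\int|K|\lambda\,\dd s$ for nonnegative predictable $K$, together with $\int_0^t|K|\dd N \le \sup|K|\,N(t)$ and the finite fourth moment of $N(t)$.

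If the fourth moment of $\Lambda(t)$ is not automatically finite, I would localise. Take $\tau_\ell=\inf\{s: N(s)+\Lambda(s)\ge \ell\}$ and apply the identity at $t\wedge\tau_\ell$, where everything is bounded. Then let $\ell\to\infty$, using monotone convergence on the nonnegative term $\int H^4\lambda$ and dominated convergence, by means of $\BE[N(t)^4]<\infty$, on the remaining terms. The shifted identity follows by the same argument applied to $\tilde X(r) = X(r)-X(s)=\int_s^r H\,\dd M$ for $r\ge s$, which starts at zero at time $s$. Equivalently, one can apply the first identity with $H$ replaced by $H\1_{(s,\infty)}$, which is still predictable.
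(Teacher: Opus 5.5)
Your proposal is correct and follows the paper's proof essentially step for step. It uses the same change-of-variables formula for the finite-variation process $X$, the same binomial expansion of the jump term (using $\Delta N\in\{0,1\}$), the same substitution $\dd N=\dd M+\lambda\,\dd s$ before taking expectations, and the same reduction of the shifted identity to the unshifted one. You are more careful than the paper, which simply asserts that the $\dd M$-integral is a martingale. Your localisation fallback is not needed: $\BE[\Lambda(t)^4]\le 4^4\,\BE[N(t)^4]$ holds automatically for the compensator of a counting process (Lenglart--L\'epingle--Pratelli), so for bounded $H$, which the paper also tacitly requires, your direct integrability argument already closes.
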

\begin{proof}
Applying the Itô formula (see e.g. Theorem 14.2.3 in \cite{CohenElliott}), we get for any $n \in \BN$ that
\begin{equation*}
    \dd(X(t)^n) = -n\lambda(t)H(t)X(t)^{n - 1}\dd t + (X(t-) + \Delta X(t))^n - X(t-)^n.
\end{equation*}
Since $\Delta X(t) = H(t)\Delta M(t) = H(t)\Delta N(t)$, we have for $n = 4$ that
\begin{align*}
    (X(t-) + \Delta X(t))^4 - X(t-)^4 &= 4X(t-)^3 H(t)\Delta N(t) + 6X(t-)^2 H(t)^2 \Delta N(t) \\
    &+ 4X(t-) H(t)^3 \Delta N(t) + H(t)^4 \Delta N(t).
\end{align*}
Thus,
\begin{align*}
    \dd(X(t)^4) &= -4\lambda(t)H(t)X(t)^3 \dd t \\
    &+ (4X(t-)^3 H(t) + 6X(t-)^2 H(t)^2 + 4X(t-) H(t)^3 + H(t)^4)\dd N(t).
\end{align*}
Adding and subtracting the compensator from $\dd N$, we get
\begin{align*}
    X(t)^4 &= \int_0^t (6X(s)^2 H(s)^2 + 4X(s)H(s)^3 + H(s)^4)\lambda(s) \dd s \\
    &+ \int_0^t (4X(s-)^3 H(s) + 6X(s-)^2 H(s)^2 + 4X(s-)H(s)^3 + H(s)^4)\dd M(s).
\end{align*}
The second term is a martingale, and the first assertion follows immediately. As for the second claim, note that if $s \geq 0$ is fixed, it holds for the process $\tilde{X}(t) = X(t) - X(s)$ that $\dd \tilde{X}(t) = \dd X(t)$ and $\Delta \tilde{X}(t) = \Delta X(t)$. Hence we can copy the proof of the $s = 0$ case.
\end{proof}

\begin{cor}\label{cor:shiftedfourthmoment}
For a counting process $N$ with $\BE[N(t)^4] < \infty$ and intensity $\lambda$, it holds for $t \geq s \geq 0$ and $\tilde{M}(t) = M(t) - M(s)$ that
\begin{align*}
    \BE[(M(t) - M(s))^4] = \int_s^t \BE[(6\tilde{M}(u-)^2 + 4\tilde{M}(u-) + 1)\lambda(u)]\dd u.
\end{align*}
\end{cor}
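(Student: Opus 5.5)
Corollary~\ref{cor:shiftedfourthmoment} is the special case of Lemma~\ref{lem:generalfourthmoment} with $H \equiv 1$. With this choice we have $X(t) = \int_0^t \dd M(u) = M(t)$, and $H$ is trivially predictable. I would apply the second display of Lemma~\ref{lem:generalfourthmoment} directly. Since $X(u) - X(s) = M(u) - M(s) = \tilde{M}(u)$ and $H(u)^2 = H(u)^3 = H(u)^4 = 1$, it gives
\begin{equation*}
    \BE[(M(t) - M(s))^4] = \int_s^t \BE\big[(6\tilde{M}(u)^2 + 4\tilde{M}(u) + 1)\lambda(u)\big]\dd u.
\end{equation*}

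The only remaining point is to replace $\tilde{M}(u)$ by its left limit $\tilde{M}(u-)$. In the It\^o computation in the proof of Lemma~\ref{lem:generalfourthmoment}, the integrand arises as $X(u-)$. The lemma states it with $X(u)$, and for integration against $\lambda(u)\dd u$ the two agree. The reason is that $\tilde{M}$ has only countably many jump times, and they lie in the (random) discontinuity set of $N$. So for each $\omega$, $\tilde{M}(u) = \tilde{M}(u-)$ for Lebesgue-a.e.\ $u$. Then Fubini, applied to the integral over $\Omega \times [s, t]$, lets us swap $\tilde{M}(u)$ for $\tilde{M}(u-)$ inside the integrand without changing its value.

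To justify Fubini and the martingale step in Lemma~\ref{lem:generalfourthmoment}, I would check integrability. We have $|\tilde{M}(u)| \le N(t) + \Lambda(t)$ on $[s, t]$. The assumption $\BE[N(t)^4] < \infty$, together with the relation $\BE[\Lambda(t)^p] \lesssim \BE[N(t)^p]$ for counting processes, gives fourth moments of $\Lambda(t)$. Hence $\BE\int_s^t (\tilde{M}^2 + |\tilde{M}| + 1)\lambda\,\dd u < \infty$, by H\"older's inequality applied to $\sup_u \tilde{M}(u)^2 \cdot \Lambda(t)$. This integrability check is the only mild obstacle; if it is needed, a localization argument with stopping times followed by monotone convergence would settle it. Otherwise the corollary is an immediate specialization.
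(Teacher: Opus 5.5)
Your proposal is correct and follows the paper's intended route: the corollary is the case $H \equiv 1$ of the second display of Lemma~\ref{lem:generalfourthmoment}, which the paper states without a separate proof. Your extra remarks go beyond what the paper spells out. These are the pathwise replacement of $\tilde{M}(u)$ by $\tilde{M}(u-)$ on a Lebesgue-null set of $u$, and the integrability check via fourth moments of $\Lambda(t)$, which also justifies the true-martingale step.
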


\end{document}